\documentclass[10pt]{article}
 \usepackage{amsmath,amsfonts,amssymb,amsthm}
 \usepackage{hyperref}
 \usepackage{multirow}

 \theoremstyle{plain}
 \newtheorem{theorem}{Theorem}[section]
 \newtheorem{lemma}[theorem]{Lemma}
 \newtheorem{corollary}[theorem]{Corollary}

 \theoremstyle{definition}
 \newtheorem{definition}[theorem]{Definition}
 
 \newtheorem{example}[theorem]{Example}

 \theoremstyle{remark}
 \newtheorem*{remark}{Remark}

 \title{Gravitational instantons with faster than quadratic curvature decay (II)}

 \author{Gao Chen and Xiuxiong Chen}

 \begin{document}

 \maketitle

\tableofcontents

 \section{Introduction}
  This is our second paper in a series to study gravitational instantons, i.e. complete hyperk\"ahler manifolds of real dimension 4 with faster than quadratic curvature decay, i.e. $|\mathrm{sec}|=O(r^{-2-\epsilon})$, where $\epsilon$ is any small positive number. In our first paper \cite{FirstPaper}, we constructed several standard models $(E,h)$ as possible ends and proved that any gravitational instanton $M$ must be asymptotic to one of the standard models with error $O'(r^{-\epsilon})$.  According to the different choice of $E, $ the gravitational instanton is called ALE, ALF-$A_k$, ALF-$D_k$, ALG, ALH-splitting or ALH-non-splitting. In this paper, we will first improve the asymptotic rate.

  \begin{theorem}
  (Main Theorem 1)
  Given any gravitational instanton $(M,g)$, there exist a bounded domain $K \subset M$ and a diffeomorphism $\Phi:  E\rightarrow M \setminus K$ such that the error term $Err=\Phi^* g-h$ satisfies

  (ALE) $|\nabla^m Err|=O(r^{-4-m}),\forall m\ge 0$.

  (ALF-$A_k$ and ALF-$D_k$) $|\nabla^m Err|=O(r^{-3-m}),\forall m\ge 0$;

  (ALG) $|\nabla^m Err|=O(r^{-\delta-m}),\forall m\ge 0$, where $\delta=\min_{n\in\mathbb{Z},n<2\beta}\frac{2\beta-n}{\beta}.$ In other words,

  \begin{center}
  \begin{tabular}{|c|c|c|c|c|c|c|c|c|}
  \hline
  Type & Regular & I$_0^*$ & II & II$^*$ & III & III$^*$ & IV & IV$^*$\\

  $\beta$ & 1 & $\frac{1}{2}$ & $\frac{1}{6}$ & $\frac{5}{6}$ & $\frac{1}{4}$ & $\frac{3}{4}$ & $\frac{1}{3}$ & $\frac{2}{3}$\\

  $\delta$ & 1 & 2 & 2 & $\frac{4}{5}$ & 2 & $\frac{2}{3}$ & 2 & $\frac{1}{2}$\\
  \hline
  \end{tabular}
  \end{center}

  (ALH-splitting) $Err=0$;

  (ALH-non-splitting) $|\nabla^m Err|=O(e^{-\delta r}),\forall m\ge 0$, where $\delta=2\pi\min_{\lambda\in \Lambda^*\setminus\{0\}}|\lambda|$;
  \end{theorem}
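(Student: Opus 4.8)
The plan is to rewrite the hyperk\"ahler equation on the end as an elliptic equation for the error, compute the indicial roots of its linearization on each standard model $(E,h)$, and then bootstrap from the weak decay $O'(r^{-\epsilon})$ furnished by \cite{FirstPaper}, using the freedom in the diffeomorphism $\Phi$ to remove the slow modes that are pure gauge. Recall that a hyperk\"ahler structure in real dimension $4$ is the same as a triple $\omega=(\omega_1,\omega_2,\omega_3)$ of closed $2$-forms spanning the bundle of self-dual $2$-forms pointwise with $\omega_i\wedge\omega_j=2\delta_{ij}\,\mathrm{dvol}$. Writing $\Phi^*\omega=\omega^E+\eta$, where $\omega^E$ is the (exactly hyperk\"ahler) model triple, closedness gives $d\eta_i=0$ and the algebraic normalization gives a pointwise constraint that is linear-plus-quadratic in $\eta$. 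Imposing in addition a Hodge-type gauge, say $d^*_h\eta_i=0$, turns this into an elliptic system of the schematic form $L\eta=Q(\eta)$, where $L$ is a second-order, Laplace-type operator whose coefficients converge to those of the model, $Q$ vanishes to second order, and --- because $\omega^E$ solves the equation exactly on $E$ --- there is no inhomogeneous term. (Equivalently one may carry $Err=\Phi^*g-h$ in the divergence gauge $\delta_h Err=0$, in which Ricci-flatness reads $\Delta_h Err=Q(Err)$.) The gauge is preserved, and $\Phi$ modified, only by composing with flows of vector fields on $E$.

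The bootstrap itself is elementary once this is set up. If $\eta=O'(r^{-a})$ then each derivative costs a factor $r^{-1}$ and $Q$ is at least quadratic, so $Q(\eta)=O'(r^{-2a-2})$; since $L$ gains two orders of decay upon inversion on the end, one gets $\eta=O'(r^{-2a})$, \emph{provided} no indicial root of $L$ lies in $(a,2a)$; if one does, say $\mu$, the conclusion weakens to $\eta=c\,r^{-\mu}v_\mu+O'(r^{-2a})$ for a model harmonic mode $v_\mu$. Starting from $a=\epsilon$ this doubles the exponent at each step, so in finitely many steps $\eta$ decays faster than any prescribed power (and, for the ALH models, faster than any $e^{-cr}$ below the first exponential rate) unless some indicial root blocks the improvement. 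The theorem thus reduces to: computing the indicial roots of $L$ on each $E$, and showing that for every root $\mu$ strictly below the rate $\delta$ in the statement the blocking term $c\,r^{-\mu}v_\mu$ is either zero or pure gauge, hence killable by adjusting $\Phi$, so that $\delta$ is the first genuinely obstructing rate.

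The indicial roots come from separation of variables adapted to the fibered geometry of each model. For ALE, $E=(\mathbb R^4\setminus B)/\Gamma$ is an exact metric cone: the roots for the hyperk\"ahler triple lie in $\{2,3,4,\dots\}$, rate $2$ being removable (cohomology) and rate $3$ being pure gauge, which leaves the bound $\delta=4$ --- most transparently, on the end one K\"ahler form is $\omega_1^E+i\partial\bar\partial\psi$ with $\psi$ a decaying scalar whose slowest harmonic rate on $\mathbb R^4/\Gamma$ is $r^{-2}$, so $\eta=i\partial\bar\partial\psi=O(r^{-4})$. For ALF the fibre circle has bounded length at infinity, so nonzero Fourier modes in that direction decay exponentially and never obstruct; the circle-invariant modes reduce to harmonic analysis on $\mathbb R^3\setminus B$ (or its $\mathbb Z_2$-quotient) with roots $1,2,3,\dots$, of which rate $1$ (the NUT charge) is fixed by matching the model and rate $2$, when present, by choosing the origin, leaving $\delta=3$. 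For ALG the $T^2$-fibres have bounded size, so only $T^2$-invariant modes matter; on the flat cone of angle $2\pi\beta$ the separation of variables forces the roots to be $2-n/\beta$, $n\in\mathbb Z$, those with $2-n/\beta\le 0$ being growing modes excluded by the asymptotics, and after removing the remaining parameter and gauge modes one is left with $\delta=\min\{2-n/\beta:\,n\in\mathbb Z,\ n<2\beta\}=\min_{n<2\beta}(2\beta-n)/\beta$, which reproduces the table. For ALH the model end is a flat cylinder $\mathbb R^+\times Y^3$ over a flat $3$-manifold; Fourier analysis on $Y^3$ leads to radial equations $u''-|2\pi\lambda|^2u=\cdots$, so every nonzero Fourier mode decays like $e^{-2\pi|\lambda|r}$ and only the $\lambda=0$ mode can grow or stay bounded --- a parameter mode, fixed by matching the cross-section and the lengths --- whence $\delta=2\pi\min_{\lambda\in\Lambda^*\setminus\{0\}}|\lambda|$. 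The ALH-splitting case is instead rigid: the end carries a parallel direction, so $M$ contains a line; by the Cheeger--Gromoll splitting theorem together with $\mathrm{Ric}\equiv 0$, $M$ is flat, and a complete flat hyperk\"ahler $4$-manifold asymptotic to the ALH-splitting model is globally isometric to it, so $Err\equiv 0$.

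The main obstacle is the last reduction: showing that every indicial mode slower than $\delta$ is removable. Two kinds appear below threshold. The \emph{parameter modes} would alter an asymptotic invariant of the model --- the NUT charge and centre of mass (ALF), the cohomology and period data (ALE, ALG), the cross-section and period lattice (ALH) --- and are excluded because \cite{FirstPaper} already matches those invariants, up to a harmless preliminary adjustment of $\Phi$ and of the origin. The \emph{pure-gauge modes} have the form $\mathcal L_X\omega^E$ for a vector field $X$ of suitably small growth; composing $\Phi$ with the time-one flow of $X$ removes such a term while disturbing the gauge condition only at strictly faster order, and one iterates. The delicate technical point is the bookkeeping for the coupled linear problem on $E$: one must verify that, in the relevant weighted spaces, the cokernel of $L$ is accounted for entirely by these two operations, i.e. that no genuine deformation of the hyperk\"ahler end survives at a rate slower than $\delta$. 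For the exact cone this can be done by hand via $\partial\bar\partial$, but for the non-conical ALF and ALG ends it requires setting up weighted elliptic (Fredholm) theory of Lockhart--McOwen type directly on the fibered model and tracking how the cokernel jumps across each indicial root; this, together with keeping the successive gauge transformations compatible with the Hodge/divergence condition, is where the bulk of the work lies.
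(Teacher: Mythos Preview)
Your proposal is correct and follows the same overall strategy as the paper: linearize the hyperk\"ahler condition on the end, identify the obstructing modes as model harmonic objects, remove the sub-threshold ones by gauge and parameter adjustments, and iterate from the weak $O'(r^{-\epsilon})$ decay. The indicial computations you sketch for each asymptotic type agree with the paper's findings.

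The implementation differs in one interesting respect. You propose fixing a Hodge/divergence gauge $d_h^*\eta_i=0$ (or $\delta_h Err=0$) up front and then invoking Lockhart--McOwen Fredholm theory for the resulting elliptic system, tracking cokernel jumps across indicial roots. The paper never fixes such a gauge. Instead it observes that, to first order, the \emph{self-dual} part of $\eta=(\eta_1,\eta_2,\eta_3)$ lies in a rank-four bundle $V$ (spanned by the model triple $\omega^E$ and its infinitesimal hyperk\"ahler rotations), defines the first-order operator $\mathcal D:\mathrm{Vect}(E)\to V$ by projecting $\mathcal L_X\omega^E$ into $V$, and checks by hand that $\mathcal D\mathcal D^*$ is asymptotic to the scalar Laplacian and surjective on the needed weighted spaces. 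A single diffeomorphism then kills the self-dual part up to quadratic error; what survives is closed and anti-self-dual, hence automatically harmonic, so the ``indicial'' analysis collapses to classifying decaying closed anti-self-dual $2$-forms on the model end --- which the paper does explicitly (a $\mathbb T^k$-invariant piece reducing to harmonic $1$-forms on the base, plus an oscillating piece that decays exponentially). Your route would work and is closer to the standard template, but carries more abstract machinery; the paper's route buys a clean geometric identification of the obstruction space as three copies of the closed ASD forms on the end, which makes both the iteration step and the case-by-case rate computation concrete rather than schematic.
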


  Actually, we will show that the deformation space of hyperk\"ahler 4-manifolds is a subspace of the space of closed anti-self-dual forms. Therefore, the asymptotic rate is at least the decay rate of the first closed anti-self-dual form.

  Remark that the ALE part of Main Theorem 1 was done by Bando, Kasue and Nakajima \cite{BandoKasueNakajima}. The ALF-$A_k$ part was done by Minerbe \cite{MinerbeMultiTaubNUT}. The ALH-splitting part can be done by the splitting theorem \cite{CheegerGromoll} \cite{FirstPaper}. So we will focus on the other three parts in this paper.

  With the improved asymptotic rate,  we can prove that any ALF gravitational instanton can be compactified in the complex analytic sense. This confirms a conjecture of Yau \cite{Yau} in ALF case.  Following  Kodaira's work \cite{Kodaira},  we can then analyze the topology of the compactification.  This allows us to give a complete classification of ALF-$D_k$ gravitational instantons.

  \begin{theorem}
  (Main Theorem 2) Any ALF-$D_k$ gravitational instanton must be the Cherkis-Hitchin-Ivanov-Kapustin-Lindstr\"om-Ro\v{c}ek metric.
  \end{theorem}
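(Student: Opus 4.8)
The plan is to carry out the program announced in the introduction. Using the improved decay of Main Theorem~1 we first compactify $M$, with respect to one of its complex structures, to a smooth compact complex surface $\overline M=M\cup D$; then we use Kodaira's classification of compact complex surfaces and of elliptic fibrations to identify the pair $(\overline M,D)$; and finally we use a Torelli-type rigidity, together with the known Cherkis--Hitchin--Ivanov--Kapustin--Lindstr\"om--Ro\v{c}ek (CHIKLR) family, to conclude that $(M,g)$ is one of these metrics. For the compactification, fix a generic complex structure $I$ in the hyperk\"ahler $2$-sphere of $M$, so that $(M,I)$ is holomorphic symplectic with form $\omega_{+}=\omega_J+\sqrt{-1}\,\omega_K$. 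On the ALF-$D_k$ model end $(E,h)$ the complex structure is explicit --- away from a compact set $E$ is the free $\mathbb{Z}_2$-quotient of a circle fibration over $\mathbb{R}^3$ minus finitely many points, and $I$ combines two flat directions into a holomorphic map to $\mathbb{C}$ with $\mathbb{C}^{*}$-fibers --- so one writes down functions on the end that are holomorphic to leading order, together with a model compact surface near infinity. Since $\Phi^{*}g$ is $O'(r^{-3})$-close to $h$ with all derivatives, a weighted $\bar\partial$-estimate on $M\setminus K$ corrects these functions to genuine holomorphic functions, and gluing along them attaches the compact divisor $D$. This is the complex analytic compactification confirming Yau's conjecture in the ALF case.

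Next I identify $(\overline M,D)$. The form $\omega_{+}$ is nowhere zero on $M$ and, comparing with the model volume form near infinity, extends to $\overline M$ as a meromorphic $2$-form with a simple pole along $D$, so $K_{\overline M}\cong\mathcal{O}_{\overline M}(-D)$ with $D$ effective; moreover $b_1(\overline M)=b_1(M)=0$, so $\overline M$ is a projective rational surface. Because $M$ deformation retracts onto a compact core whose boundary and intersection form are those of the ALF-$D_k$ end, the dual graph of the components of $D$ is the affine Dynkin diagram $\widetilde D_k$; in particular $[D]$ is the null root of that configuration, hence is nef with $[D]^{2}=0$. Running Kodaira's theory (this is the step that follows Kodaira's work) then makes $-K_{\overline M}$ semiample and $\overline M$ a rational elliptic surface, with $D$ its singular fiber of type $I^{*}_{\ell}$, the index $\ell$ being determined by $k$; the finitely many small values of $k$, including the Atiyah--Hitchin case, are handled by the same method. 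Thus $(M,I)$ is pinned down, as a complex surface, up to the finite-dimensional moduli of such pairs $(\overline M,D)$.

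It remains to show the hyperk\"ahler metric is rigid given this complex-analytic data. Here one proves a Torelli-type statement: an ALF-$D_k$ gravitational instanton is determined, up to scaling and isometry, by the pair $(\overline M,D)$ together with the cohomology classes $([\omega_I],[\omega_J],[\omega_K])$ in $H^{2}(M;\mathbb{R})$ and the asymptotic $S^1$-fiber length. Equivalently --- as foreshadowed in the introduction, where the hyperk\"ahler deformation space is identified with a space of closed anti-self-dual forms --- the Kuranishi space of ALF-$D_k$ hyperk\"ahler structures is unobstructed of the expected dimension, so the CHIKLR family is open in the moduli space of ALF-$D_k$ instantons; it is also closed, by the compactness that the controlled asymptotics of Main Theorem~1 provide, and the moduli space is connected. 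Since the CHIKLR metrics are themselves ALF-$D_k$ gravitational instantons whose natural parameters realize every admissible value of the above invariants, the rigidity statement identifies $(M,g)$, up to scaling and isometry, with a CHIKLR metric, which is the assertion of the theorem.

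The main obstacle is the rigidity in the last step: one must establish uniqueness for the complete hyperk\"ahler (equivalently, complex Monge--Amp\`ere) equation on a non-compact $4$-manifold whose convergence to the ALF-$D_k$ model is merely polynomial, and deduce injectivity of the period map; this is exactly where the faster-than-quadratic, in fact $O(r^{-3})$, decay of Main Theorem~1 is indispensable, since borderline decay would leave cokernel terms in the linearized operator. A secondary difficulty, already present in the compactification and Kodaira steps, is controlling the complex structure sharply enough near infinity to extend the holomorphic data across $D$ and to match the boundary configuration with $\widetilde D_k$ in the exceptional low-rank cases.
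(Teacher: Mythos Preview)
Your outline has the right three beats --- compactify, analyze the fibration, conclude --- but two of the steps are substantively wrong.

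First, the identification of $(\overline M,D)$ is off. On an ALF-$D_k$ end the holomorphic function one actually produces is $z\approx(-x^3+ix^2)^2$ on the $\mathbb{Z}_2$-quotient of the circle bundle over $\mathbb{R}^3\setminus B_R$; its level sets incorporate the $S^1$-fiber together with the remaining real direction, and after compactifying each fiber becomes $\mathbb{CP}^1$, not an elliptic curve. So $\overline M$ is \emph{not} a rational elliptic surface and $D$ is not an $I^*_\ell$/$\widetilde D_k$ Kodaira fiber; that picture belongs to ALG instantons, whose ends are torus-fibered. In the paper's compactification one obtains a meromorphic map $z:\overline M\to\mathbb{CP}^1$ with generic fiber a smooth rational curve, $-K=D+D_\infty$ with $D$ a section and $D_\infty=2\Theta+\Theta_0+\Theta_1$ a particular reducible rational fiber, and the Kodaira-style analysis classifies the singular fibers of this \emph{conic} fibration (giving the three possible shapes of $\{z=0\}$), not of an elliptic one.

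Second, and more seriously, your final rigidity step is circular. You invoke a Torelli-type uniqueness together with an ``open and closed in moduli'' argument, but in this paper the Torelli theorem for ALF-$D_k$ is a \emph{corollary} of Main Theorem~2, not an input to it; and the closedness/connectedness of the moduli space you appeal to is an unproved assertion at least as hard as the theorem itself. The paper avoids any abstract uniqueness by instead computing the \emph{twistor space} explicitly: for each $\zeta\in\mathbb{CP}^1$ one carries out the compactification and fiber analysis of $(M,I_\zeta)$, writes down holomorphic functions $x,y$ (equivalently $P,Q$) satisfying exactly the CHIKLR affine equation with the correct $\omega^+$, and then tracks the transition functions under $\zeta\mapsto\zeta^{-1}$ and the real structure $\zeta\mapsto-1/\bar\zeta$. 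This pins down the full twistor data, and by Hitchin--Karlhede--Lindstr\"om--Ro\v{c}ek the twistor space determines the hyperk\"ahler metric. That explicit identification is where the real content lies, and it is absent from your proposal.
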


  We will give a precise definition of the Cherkis-Hitchin-Ivanov-Kapustin-Lindstr\"om-Ro\v{c}ek metric as Example \ref{Cherkis-Hitchin-Ivanov-Kapustin-Lindstrom-Rocek} in Section 2.

  To illustrate our method of proving Main Theorem 2, we will first use the same technique to give a new proof of a theorem of Minerbe \cite{MinerbeMultiTaubNUT}:

  \begin{theorem}
  (Minerbe \cite{MinerbeMultiTaubNUT}) Any ALF-$A_k$ gravitational instanton must be the multi-Taub-NUT metric.
  \end{theorem}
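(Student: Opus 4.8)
The plan is to run, in this simpler setting, the same three-step strategy announced for Main Theorem~2: sharpen the asymptotics, compactify $(M,I)$ in the complex-analytic category, read off its structure from Kodaira's classification of surfaces \cite{Kodaira}, and finish with a uniqueness statement for the hyperk\"ahler metric. \emph{Step~1.} Fix one complex structure $I$ from the hyperk\"ahler two-sphere; then $(M,I)$ is a non-compact complex surface carrying the nowhere-vanishing holomorphic $2$-form $\Omega=\omega_J+\sqrt{-1}\,\omega_K$, so its canonical bundle is holomorphically trivial. By Main Theorem~1 the ALF-$A_k$ end is $C^\infty$-asymptotic to the model $(E,h)$ with error $O(r^{-3})$ together with all derivatives, and in particular $I$ converges to the integrable model complex structure $I_0$ on $E$ at this rate.

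\emph{Step~2 (complex-analytic compactification).} On the standard ALF-$A_k$ model $E$, which is a Gibbons--Hawking circle bundle over $\mathbb{R}^3\setminus B_R$, there are explicit $I_0$-holomorphic functions assembled from the base coordinates on $\mathbb{R}^3$ and the connection on the circle fibre, and the ring they generate exhibits the $A_k$ quotient singularity $\mathbb{C}^2/\mathbb{Z}_{k+1}$ at infinity. I would correct these to genuine $I$-holomorphic functions on a neighbourhood of the end by solving $\bar\partial$ with weighted estimates on $E$, the obstruction being controlled by $I-I_0=O(r^{-3})$, which decays fast enough against the weights appropriate to a collapsing ALF end so that the correction is of lower order; the corrected functions are holomorphic and still proper. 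This identifies a neighbourhood of the end with a punctured neighbourhood of the singular point of $\mathbb{C}^2/\mathbb{Z}_{k+1}$, equivalently of its minimal resolution, so one may glue in a divisor $D_\infty$ (a chain of rational curves) to obtain a compact complex surface $\bar M=M\cup D_\infty$; this realizes Yau's conjecture \cite{Yau} in the present case. Since $\Omega$ is nowhere zero on $M$, its meromorphic extension to $\bar M$ has divisor supported on $D_\infty$, so $K_{\bar M}$ is represented by a divisor supported on the rational curves $D_{\infty,i}$.

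\emph{Step~3 (Kodaira, then uniqueness).} This forces $\bar M$ to be a rational surface; computing the self-intersections of the $D_{\infty,i}$ and applying adjunction pins down the configuration at infinity and identifies $(M,I)$ with the minimal resolution of $\mathbb{C}^2/\mathbb{Z}_{k+1}$, with $\Omega$ its standard holomorphic symplectic form up to scale. On this fixed complex surface the only remaining data are the K\"ahler class of $\omega_I$ and the periods of $\omega_J,\omega_K$; these are exactly the parameters of the multi-Taub-NUT family (the positions of the $k+1$ Gibbons--Hawking centres in $\mathbb{R}^3$ and the length of the asymptotic circle), so a uniqueness theorem of Calabi--Yau/Torelli type for hyperk\"ahler metrics with prescribed complex structure, cohomology class and asymptotics shows that $g$ is the corresponding multi-Taub-NUT metric. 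Alternatively, the holomorphic $\mathbb{C}^*$-action on the minimal resolution that preserves $\Omega$ produces, after averaging, a triholomorphic circle action on $(M,g)$; passing to the Gibbons--Hawking ansatz and matching the ALF asymptotics forces the potential to be $V=1+\sum_j|x-p_j|^{-1}$.

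The step I expect to be the main obstacle is Step~2: turning the rate $O(r^{-3})$ from Main Theorem~1 into honest holomorphic functions near infinity requires solving $\bar\partial$ with sharp weighted estimates on a collapsing ALF end, where the circle fibre stays bounded while the $\mathbb{R}^3$ base expands so that the Euclidean weighted theory does not apply directly, and it then requires checking that the resulting map is proper and that nothing beyond a chain of rational curves is needed to compactify. Once the compact model $\bar M$ is in hand, the Kodaira bookkeeping and the concluding uniqueness argument are comparatively routine.
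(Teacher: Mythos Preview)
Your compactification picture in Step~2 is mistaken, and this derails the rest of the argument. The model end $E$ is the $\mathbb{Z}_{k+1}$-quotient of Taub-NUT outside a ball; since Taub-NUT is biholomorphic to $\mathbb{C}^2$, the end is biholomorphic to the \emph{exterior} of a ball in $\mathbb{C}^2/\mathbb{Z}_{k+1}$, not to a punctured neighbourhood of the $A_k$ singular point (which is the \emph{interior} of a ball minus the origin). You therefore cannot compactify by gluing in a single point or its resolved chain of rational curves. The paper's compactification is quite different: one first produces an $I$-holomorphic function $z$ (approximately $-x^3+ix^2$) whose generic fibre is $\mathbb{C}^*$, then finds fibre coordinates $\rho,\xi$ with $\omega^+=4i\,\mathrm{d}\log\rho\wedge\mathrm{d}z=4i\,\mathrm{d}z\wedge\mathrm{d}\log\xi$ and compactifies each fibre to $\mathbb{CP}^1$ by adding the divisors $D_\pm=\{\rho=\infty\},\{\xi=\infty\}$, and finally adds $D_\infty=\{z=\infty\}$. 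The output is a $\mathbb{CP}^1$-fibration over $\mathbb{CP}^1$ with $-K=D_-+D_++2D_\infty$; the $A_k$ curves live in the \emph{singular fibres} of $z$, not at infinity. Kodaira's identities then force the singular fibres to be chains and give $\rho\xi=\prod_{\alpha=1}^{k+1}(z-P_\alpha)$, i.e.\ the LeBrun model.

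Your Step~3 is also problematic as a route to the metric. Knowing $(M,I,\omega^+)$ alone does not determine the hyperk\"ahler structure, and invoking a Torelli-type uniqueness is circular here: the Torelli theorem in the paper is a \emph{corollary} of the classification, not an input. Your alternative (promote the holomorphic $\mathbb{C}^*$-action to a triholomorphic circle action and run the Gibbons--Hawking ansatz) is precisely Minerbe's original argument, which the paper explicitly sets out to replace because no such Killing field exists in the $D_k$ case. The paper's substitute is the twistor method: one repeats the compactification for every $\zeta\in\mathbb{CP}^1$, obtaining $\rho(\zeta),\xi(\zeta),z(\zeta)$; comparing the two charts $\zeta\neq\infty$ and $\tilde\zeta=\zeta^{-1}\neq\infty$ forces the transition $\tilde\rho=e^{-z/\zeta}\zeta^{-k-1}\rho$, so each $P_\alpha(\zeta)$ is a quadratic in $\zeta$, and the real structure pins it to $P_\alpha=a_\alpha\zeta^2+2b_\alpha\zeta-\bar a_\alpha$. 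Since $\omega$ and $\tau$ on the twistor space determine the hyperk\"ahler metric, $M$ is multi-Taub-NUT.
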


  We will give a precise definition of the multi-Taub-NUT metric as Example \ref{multi-Taub-NUT} in Section 2.

  Even though Theorem 1.3 has been proved by Minerbe using other methods, our new proof is meaningful because it's a simplification of Main Theorem 2.

  As a corollary, we will prove a Torelli-type theorem for ALF gravitational instantons as an analogy of Kronheimer's results \cite{Kronheimer1} \cite{Kronheimer2}:

  \begin{corollary}
  (Torelli-type theorem for ALF gravitational instantons)

  Let $M$ be the 4-manifold which underlies an ALF-$A_k$ or ALF-$D_k$ gravitational instanton. Let $[\alpha^1],[\alpha^2],[\alpha^3]\in H^2(M,\mathbb{R})$ be three cohomology classes. Let $L>0$ be any positive number. Then there exists on $M$ an ALF hyperk\"ahler structure for which the cohomology classes of the K\"ahler forms $[\omega^i]$ are the given $[\alpha^i]$ and the length of the asymptotic $S^1$-fiber goes to $L$ at infinity. It's unique up to isometries which respect $I$,$J$, and $K$. Moreover, it's non-singular if and only if $[\alpha^i]$ satisfy the nondegeneracy condition:

  For each $[\Sigma]\in H_2(M,\mathbb{Z})$ with $[\Sigma]^2=-2$, there exists $i\in\{1,2,3\}$ with $[\alpha^i][\Sigma]\not=0$.
  \end{corollary}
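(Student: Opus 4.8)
The plan is to combine the classification results (Main Theorem 2 and Theorem 1.3) with the compactification and period-map analysis that their proofs provide, mirroring the structure of Kronheimer's ALE Torelli theorem. First I would set up the period map: given an ALF hyperk\"ahler structure on the fixed $4$-manifold $M$ with prescribed asymptotic fiber length $L$, the triple of K\"ahler classes $([\omega^1],[\omega^2],[\omega^3])\in (H^2(M,\mathbb{R}))^3$ is a well-defined invariant, and since the ends are modeled on the fixed ALF-$A_k$ or ALF-$D_k$ end with the improved decay of Main Theorem 1, these classes lie in the compactly supported cohomology $H^2_c(M,\mathbb{R})\cong H^2(M,\mathbb{R})$ (the end is a rational homology circle bundle, so $H^2$ of the end is torsion and the restriction map is an isomorphism after tensoring with $\mathbb{R}$). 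The existence half is then exactly the content already established: the Cherkis-Hitchin-Ivanov-Kapustin-Lindstr\"om-Ro\v{c}ek family (resp. the multi-Taub-NUT family) is parametrized by precisely such data — the positions of the monopole points / bolt parameters — and one checks by a direct computation (or by the twistor/hyperk\"ahler-quotient construction recalled in Section 2) that as these parameters vary the induced K\"ahler classes sweep out all of $(H^2(M,\mathbb{R}))^3$, with $L$ tunable independently, and that the metric is complete and non-singular precisely when no class $[\Sigma]$ with $[\Sigma]^2=-2$ is orthogonal to all three $[\alpha^i]$ (such a class would be represented by a collapsing holomorphic sphere, giving an orbifold point).

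The uniqueness half is where the real work lies, and here I would invoke Main Theorem 2 (resp. Theorem 1.3): any ALF-$D_k$ (resp. ALF-$A_k$) gravitational instanton \emph{is} a CHIKLR (resp. multi-Taub-NUT) metric. So given two ALF hyperk\"ahler structures on $M$ with the same $([\alpha^i])$ and the same asymptotic $L$, both are members of the model family, and it remains to see that two members of the family with equal periods and equal $L$ differ by a diffeomorphism of $M$ isotopic to the identity that intertwines $I,J,K$. The parameters of the family (monopole points in $\mathbb{R}^3$, modulo the relevant discrete symmetry group — the Weyl group action / permutations for $A_k$, and the $D_k$-type symmetry for CHIKLR) are recovered from the periods by inverting the explicit map computed above; two parameter values giving the same periods are related by that discrete symmetry, which is realized by an isometry respecting the hyperk\"ahler triple. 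The point to check carefully is that the marking — the identification with the fixed $M$ — is pinned down up to isotopy, which follows because $M$ is simply connected (or has the homotopy type determined by the resolution graph) and a diffeomorphism acting trivially on $H^2$ is isotopic to one respecting the family structure; this is the standard diffeomorphism-group input in Torelli statements and I would cite it rather than reprove it.

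The main obstacle, and the step I expect to require the most care, is the surjectivity of the period map together with the sharp non-degeneracy criterion: one must show that \emph{every} triple $([\alpha^i])$ with the stated $(-2)$-class condition is actually attained by a \emph{non-singular} member of the model family, and that relaxing the condition produces exactly an orbifold singularity and nothing worse. For the $A_k$ case this is transparent (the Taub-NUT parameters are just $k+1$ points in $\mathbb{R}^3$ and coincidences of points correspond exactly to vanishing of a root-type class), but for $D_k$ the hyperk\"ahler-quotient description is more delicate and I would argue via the twistor space / minimal resolution of the corresponding $D_k$ ALE space, transplanting Kronheimer's ALE argument through the ALF deformation. A secondary subtlety is ensuring the asymptotic fiber length is genuinely an independent parameter and that ``goes to $L$'' is the right normalization — this is handled by the explicit asymptotics of the Taub-NUT / CHIKLR metrics, where $L$ appears as an overall scale of the Gibbons-Hawking $S^1$. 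Once these two points are in place, the corollary follows by assembling the existence (model family surjects onto periods $\times\, L$), the regularity dichotomy ($(-2)$-class condition), and the uniqueness (classification theorems plus discrete-symmetry bookkeeping).
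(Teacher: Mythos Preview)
Your proposal is correct and follows essentially the same strategy as the paper: reduce to the model families via the classification theorems, then show the period map on those families is a bijection and read off the non-degeneracy criterion. The paper's execution, however, is considerably more direct than what you sketch. Rather than invoking any abstract surjectivity argument or transplanting Kronheimer's ALE machinery, the paper simply writes down the $(-2)$-spheres explicitly --- $S_{\beta,-\alpha}=\pi^{-1}$ of the segment $[\mathbf{x}_\alpha,\mathbf{x}_\beta]$ in the $A_k$ case, and $S_{\pm\beta,\pm\alpha}$ in the $D_k$ case --- and computes $\int_{S_{\beta,-\alpha}}\omega=8\pi(P_\beta-P_\alpha)$ by hand using $\omega=4i\,\mathrm{d}\log\rho\wedge\mathrm{d}z$. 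This immediately gives the inverse of the period map (periods $\leftrightarrow$ parameters $(a_\alpha,b_\alpha)$), so existence, uniqueness, and the singularity criterion all fall out at once; the $D_k$ case is no harder than $A_k$ and is dispatched in two lines, with the small-$k$ cases $D_1$ (one sphere of self-intersection $0$, no $(-2)$-classes) and $D_0$ ($H_2=0$) handled separately by inspection. Your worries about surjectivity being the ``main obstacle'' and the $D_k$ case being ``more delicate'' are therefore unfounded --- the explicit period computation resolves both instantly. The asymptotic length $L$ is handled exactly as you say, by rescaling to $\mu=1$; the paper does not address the marking/isotopy issues you raise.
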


  In Section 2, we give several definitions. In Section 3, we develop the deformation theory of hyperk\"ahler 4-manifolds. In Section 4, we use this deformation theory to prove Main Theorem 1. In Section 5, we give a new proof of the classification of ALF-$A_k$ gravitational instantons. In Section 6, we prove Main Theorem 2. In Section 7, we prove the Torelli-type theorem for ALF gravitational instantons as a corollary. In Section 8, we discuss the applications of our work.

  \section{Definitions and Notations}
  In this paper, we follow the notations of our first paper. $r$ is still defined by $r(p)=|\pi(p)|$, where $\pi:E\rightarrow C(X)\setminus B_R$ is the torus fiberation. $O'(r^\alpha)$ still means that for any $m\ge 0$, the $m$-th derivatives of the tensor belong to $O(r^{\alpha-m})$. $\chi$ is a smooth cut-off function from $(-\infty,+\infty)$ to $[0,1]$ such that $\chi\equiv 1$ on $(-\infty,1]$ and $\chi\equiv 0$ on $[2,\infty)$. $B_{\kappa r_i}(p_i)$ still means the cover satisfying very good conditions. $\omega^1$, $\omega^2$, $\omega^3$ are the K\"ahler forms according to $I$, $J$, $K$. $\omega^+=\omega^2+i\omega^3$ is the $I$-holomorphic symplectic form.

  We still use the following definition of twistor space of hyperk\"ahler manifolds.

  \begin{definition}
  (c.f. \cite{HithcinKarlhedLindstromRocek}) Let $(M,g,I,J,K)$ be a hyperk\"ahler manifold. Then the twistor space $Z$ of $M$ is the product manifold $M\times \mathbb{S}^2$ equipped with an integrable complex structure
  $$\underline{I}=(\frac{1-\zeta\bar\zeta}{1+\zeta\bar\zeta}I-\frac{\zeta+\bar\zeta}{1+\zeta\bar\zeta}J
  +i\frac{\zeta-\bar\zeta}{1+\zeta\bar\zeta}K,I_0),$$
  where $\zeta\in\mathbb{C}\subset\mathbb{C}\cup\{\infty\}=\mathbb{CP}^1=\mathbb{S}^2$ is the coordinate function, and $I_0$ is the standard complex structure on $\mathbb{CP}^1$.
  \end{definition}

  Notice that our definition is different from \cite{HithcinKarlhedLindstromRocek} to correct a sign error.

  We can define a form $\omega$ on the twistor space by
  $$\omega=(\omega^2+i\omega^3)+2\zeta\omega^1-\zeta^2(\omega^2-i\omega^3).$$
  It's a holomorphic section of the vector bundle $\Lambda^2T_F^*\otimes \mathcal{O}(2)$, where $F$ means the fiber of $Z$ which is diffeomorphic to $M$.
  We also have a real structure $\tau(p,\zeta)=(p,-1/\bar\zeta)$. It takes the complex structure $\underline{I}$ to its conjugate $-\underline{I}$. In \cite{HithcinKarlhedLindstromRocek}, they proved that $\omega$ and $\tau$ determine the hyperk\"ahler metric.

  We still use the following Hilbert spaces
  \begin{definition}
  Define the ${L^2_{\delta}}$-norm of a tensor by
  $$||\phi||_{L^2_{\delta}(\Omega)}=\sqrt{\int_\Omega|\phi|^2r^\delta\mathrm{dVol}}.$$
  Let $L^2_{\delta}$ be the space of tensors with finite $L^2_{\delta}$-norm.
  Define $\nabla\phi=\psi$ in the distribution sense if for any $\xi\in\mathrm{C}^{\infty}_0$, $(\phi,\nabla^*\xi)=(\psi,\xi)$. Let $H^2_{\delta}$ be the space of  all tensors $\phi$
  such that
  $$\phi \in L^2_{\delta},  \qquad  \nabla \phi \in L^2_{\delta+2}\qquad {\rm and }\qquad   \nabla^2 \phi \in L^2_{\delta+4}.$$
  We can define the norm in this weighted space by
  $$||\phi||_{H^2_{\delta}(\Omega)}=\sqrt{\int_\Omega|\phi|^2r^\delta\mathrm{dVol}+\int_\Omega|\nabla\phi|^2r^{\delta+2}\mathrm{dVol}
  +\int_\Omega|\nabla^2\phi|^2r^{\delta+4}\mathrm{dVol}}.$$
  The inner product is defined accordingly.
  \end{definition}

  In our first paper, we proved that compact supported smooth tensors are dense in those Hilbert spaces. Therefore, it's enough to prove something for compact supported smooth tensors. In ALH-non-splitting case, we can define similar Hilbert spaces using exponential growth weights.

  Now let's define the multi-Taub-NUT metric and more general Gibbons-Hawking ansatz:

  \begin{example}
  \label{multi-Taub-NUT}
  Let $$V(\mathbf{x})=1+\sum_{\alpha=1}^{k+1}\frac{2m}{|\mathbf{x}-\mathbf{x}_{\alpha}|}.$$
  Let $\pi:M_0\rightarrow\mathbb{R}^3\setminus\{\mathbf{x}_{\alpha}\}$ be the $\mathbb{S}^1$-bundle of Euler class -1 around each $\mathbf{x}_{\alpha}$. Let $\eta$ be the connection form with curvature $\mathrm{d}\eta=*\mathrm{d}V$.
  Then $$g=V\mathrm{d}\mathbf{x}^2+V^{-1}\eta^2$$ gives a metric on $M_0$. Let $M=M_0\cup\{p_\alpha\}$ be the completion. Then $M$ is called the multi-Taub-NUT metric with total mass $(k+1)m$. When $k=-1$, $M$ is the trivial product of $\mathbb{S}^1$ and $\mathbb{R}^3$.

  More generally, as long as $V$ is harmonic, we can do the similar construction and call $M$ the Gibbons-Hawking ansatz.
  It has complex structures satisfying
  $$\mathrm{d}x^1=I^*(V^{-1}\eta)=J^*\mathrm{d}x^2=K^*\mathrm{d}x^3.$$
  \end{example}

  Now let's recall the holomorphic structure of the multi-Taub-NUT metric proved by Claude LeBrun \cite{LeBrun}

  \begin{theorem}
  (LeBrun)$(M,I)$ is biholomorphic to the manifold $$uv=\prod_{\alpha=1}^{k+1} (z-(-x^3_\alpha+ix^2_\alpha))$$ if $-x^3_\alpha+ix^2_\alpha$ are distinct or the minimal resolution of it otherwise.
  \label{LeBrun}
  \end{theorem}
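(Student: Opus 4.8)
The plan is to realise $(M,I)$ concretely inside $\mathbb{C}^3$ by exhibiting three $I$-holomorphic functions $u,v,z$ whose only relation is $uv=\prod_{\alpha}(z-z_\alpha)$, and then to check that the resulting map is a biholomorphism onto that affine surface, or onto its minimal resolution when the roots coincide. The starting point is to make $I$ explicit from the Gibbons--Hawking data. From $\mathrm{d}x^1=I^*(V^{-1}\eta)$ together with its cyclic partners, equivalently from $\omega^2=\eta\wedge\mathrm{d}x^2+V\,\mathrm{d}x^3\wedge\mathrm{d}x^1$ and $\omega^3=\eta\wedge\mathrm{d}x^3+V\,\mathrm{d}x^1\wedge\mathrm{d}x^2$, one computes that the $I$-holomorphic symplectic form factors as $\omega^+=\omega^2+i\omega^3=\alpha\wedge\mathrm{d}z$ with $\alpha=\eta+iV\,\mathrm{d}x^1$ and $z=-x^3+ix^2$ (up to the sign conventions fixed in Section 2). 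In particular $z$ is a global $I$-holomorphic function on $M$ with $z(p_\alpha)=z_\alpha:=-x^3_\alpha+ix^2_\alpha$, and $\mathrm{d}z$, $\alpha$ span $\Lambda^{1,0}$ wherever they are independent.

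The second step is the construction of $u$, and of $v:=P(z)/u$ where $P(z):=\prod_{\alpha}(z-z_\alpha)$. Since $\mathrm{d}\omega^+=0$ we have $\mathrm{d}\alpha\wedge\mathrm{d}z=0$, so $\alpha$ restricts to a closed $1$-form on each fibre curve $C_c=\{z=c\}$; moreover a Stokes argument (the integral of $\mathrm{d}\eta=*\mathrm{d}V$ over the annulus $\pi^{-1}(\gamma)$ joining two fibres vanishes because its image in $\mathbb{R}^3$ is a curve) shows that the vertical period $L=\int_{S^1}\alpha=\int_{S^1}\eta$ is the same constant on every $C_c$ whose line in $\mathbb{R}^3$ misses all the $x_\alpha$. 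Hence $\exp\!\bigl(\tfrac{2\pi i}{L}\!\int\alpha\bigr)$ is single valued along each such $C_c$, and it globalises to a function $u$ on $M_0$ once the branch cuts are organised along the rays in the $z$-plane dictated by the factorisation of $P$, so that $uv=P(z)$ holds by construction. The decisive point is that the Euler number $-1$ of the circle bundle around each $x_\alpha$ is exactly what makes $u$ extend holomorphically across the fibre point $p_\alpha$, with a first order zero along the appropriate component of $\{z=z_\alpha\}$; together with the identity $\omega^+=\alpha\wedge\mathrm{d}z=\tfrac{L}{2\pi i}\,\tfrac{\mathrm{d}u}{u}\wedge\mathrm{d}z$ this forces $u$ and $v$ to be holomorphic, and pins them down up to a multiplicative constant which is then fixed by the ALF normalisation at infinity.

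The third step is to check that $F=(u,v,z):M\to\{uv=P(z)\}\subset\mathbb{C}^3$ is a biholomorphism. It is a local biholomorphism on $M_0$ off the curves $\{z=z_\alpha\}$ (the Jacobian is read off from $\tfrac{\mathrm{d}u}{u}\wedge\mathrm{d}z=\tfrac{2\pi i}{L}\,\omega^+\neq 0$), it extends holomorphically over the $p_\alpha$ by the previous step, and it is proper because $z$ grows quadratically in the base while $u$ and $v$ grow exponentially along the $x^1$-axis, so the preimage of a ball lies in $\pi^{-1}$ of a bounded subset of $\mathbb{R}^3$; the matching ALF asymptotics of \cite{FirstPaper} may also be invoked here. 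Injectivity holds because $z$ separates distinct fibres $C_c$ and on a single fibre the non-vanishing exponential factor of $u$ (or of $v$) is a global coordinate. A proper holomorphic map of equal-dimensional complex manifolds that is injective and somewhere a local isomorphism has degree one, hence is a biholomorphism onto its image; when the $z_\alpha$ are distinct the target surface is smooth and we are done. When some roots coincide, the target acquires rational double points at the corresponding points $(0,0,z_\alpha)$; since $M$ is smooth, $F$ lifts through the minimal resolution, and the same degree-one argument shows the lift is a biholomorphism, the exceptional $\mathbb{P}^1$'s being precisely the compact components of $\{z=z_\alpha\}$, i.e.\ the preimages of the segments joining the colliding centres.

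I expect the genuine obstacle to be the middle step: producing $u$ as an honest global holomorphic function and, above all, verifying that it extends \emph{holomorphically}, not merely continuously, across each $p_\alpha$ with exactly the right vanishing order, so that the clean relation $uv=P(z)$ and the holomorphicity of $v$ fall out. That is the technical heart of LeBrun's argument \cite{LeBrun}; the first step is bookkeeping with the Gibbons--Hawking coframe, and the last step is standard complex-surface theory once completeness and ALF decay supply properness at infinity.
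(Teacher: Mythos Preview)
Your route is workable but differs from the paper's in emphasis. You construct $u$ analytically by exponentiating a primitive of $\alpha=\eta+iV\,\mathrm{d}x^1$ along the fibres, then argue extension across the centres $p_\alpha$ via the Euler number, and finally package the identification as a proper degree-one map $F=(u,v,z)$. The paper instead exploits the holomorphic $\mathbb{C}^*$-action generated by the vector field $X$ defined by $\omega^+(X,\cdot)=-i\,\mathrm{d}z$: each generic fibre is a single $\mathbb{C}^*$-orbit, each critical fibre $\{z=z_\alpha\}$ splits into three orbits $\{x^1<x^1_\alpha\}$, $\{p_\alpha\}$, $\{x^1>x^1_\alpha\}$, and the two open sets $M^\pm$ obtained by keeping only the $\{x^1\gtrless x^1_\alpha\}$ branch at each critical value are each biholomorphic to $\mathbb{C}_z\times\mathbb{C}^*$. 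The transition on the overlap is then read off directly as multiplication by $P(z)^{-1}$, which is the gluing description of $uv=P(z)$ (or its minimal resolution when roots collide, illustrated by an explicit two-point example).

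The orbit-decomposition buys exactly the step you flagged as hardest: once $M^+$ carries a free transitive fibrewise $\mathbb{C}^*$-action over a contractible base, the biholomorphism $M^+\cong\mathbb{C}\times\mathbb{C}^*$ is immediate, with no need for the ``branch cuts organised along rays'' you invoke. That phrase is the one genuinely murky spot in your outline---there are no branch cuts, since the principal $\mathbb{C}^*$-bundle over $\mathbb{C}\setminus\{z_\alpha\}$ is Stein-trivial and admits a global holomorphic section; what happens at each $z_\alpha$ is not a cut but a zero of the correct order forced by the degree of the circle bundle. In exchange, your approach makes the identity $\omega^+=\tfrac{L}{2\pi i}\,\mathrm{d}\log u\wedge\mathrm{d}z$ and the properness at infinity more explicit than the paper does. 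The two arguments are dual views of the same integration of $X$; the paper's is shorter because the $\mathbb{C}^*$-action absorbs the globalisation in one stroke.
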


  \begin{proof}
  The function $z=-x^3+ix^2$ is an $I$-holomorphic function on $M$. We can define a holomorphic vector field $X$ by
  $$\omega^{+}(X,Y)=-i\mathrm{d}z(Y).$$
  The action of $X$ gives $\mathbb{C}^*$-orbits of $M$. For $z\not=-x^3_\alpha+ix^2_\alpha$, there is only one $\mathbb{C}^*$-orbit. If $-x^3_\alpha+ix^2_\alpha$ are distinct, each $\{z=-x^3_\alpha+ix^2_\alpha\}$ is divided into three $\mathbb{C}^*$-orbits: $\{x^1<x^1_{\alpha}\}$ ,$\{x^1=x^1_{\alpha}\}$, and $\{x^1>x^1_{\alpha}\}$.
  Let $$M^-=\{z\not=-x^3_\alpha+ix^2_\alpha\}\cup\cup_{\alpha}\{z=-x^3_\alpha+ix^2_\alpha,x^1<x^1_{\alpha}\},$$
  $$M^+=\{z\not=-x^3_\alpha+ix^2_\alpha\}\cup\cup_{\alpha}\{z=-x^3_\alpha+ix^2_\alpha,x^1>x^1_{\alpha}\}.$$
  Then both $M^+$ and $M^-$ are biholomorphic to $\mathbb{C}\times\mathbb{C}^*$.
  On the overlap, we have $(z,v)_{M^+}\sim(z,\frac{v}{\prod (z-(-x^3_\alpha+ix^2_\alpha))})_{M^-}$.

  If $-x^3_\alpha+ix^2_\alpha$ are not distinct. For example, suppose we have two points $(0,0,0)$ and $(1,0,0)$. Then we can define $$M_1=\{z\not=0\}\cup\{z=0,x^1<0\},$$
  $$M_2=\{z\not=0\}\cup\{z=0,0<x^1<1\},$$
  $$M_3=\{z\not=0\}\cup\{z=0,x^1>1\}.$$
  On the overlap, we have $(z,v)_{M_1}\sim(z,\frac{v}{z})_{M_2}\sim(z,\frac{v}{z^2})_{M_3}$.
  In other words, it's the minimal resolution of $uv=z^2$ in the sense that we replace the point $\{u=v=z=0\}$ by $\mathbb{CP}^1=\{z=0,0\le x^1\le 1\}.$
  It's similar in general case.
  \end{proof}

  Using Theorem \ref{LeBrun}, we can get the twistor description of the multi-Taub-NUT metric as in \cite{CherkisKapustin}.

  \begin{example}
  Let $U$ be the affine variety in $\mathbb{C}^4$ with coordinates $(\zeta,z,\rho,\xi)$ defined by
  $$\rho\xi=\prod_{\alpha=1}^{k+1}(z-P_{\alpha}(\zeta))$$
  or the minimal resolution of it, where $$P_{\alpha}(\zeta)=a_{\alpha}\zeta^2+2b_{\alpha}\zeta-\bar a_{\alpha}$$ with parameters $a_{\alpha}\in\mathbb{C}$ and $b_{\alpha}\in\mathbb{R}$.
  Take two copies of $U$ and glue them together over $\zeta\not=0,\infty$ by
  $$\tilde\zeta=\zeta^{-1},$$
  $$\tilde z=\zeta^{-2}z,$$
  $$\tilde\rho=e^{-z/\zeta}\zeta^{-k-1}\rho,$$
  $$\tilde\xi=e^{z/\zeta}\zeta^{-k-1}\xi.$$
  Then $\zeta$ lies in $\mathbb{CP}^1=\mathbb{C}\cup\{\infty\}$ and $z$ is a section in $\mathcal{O}(2)$.
  Define $$\omega=4i\mathrm{d}\log\rho\wedge\mathrm{d}z=i\mathrm{d}z\wedge\mathrm{d}\chi.$$
  Define the real structure $\tau$ by
  $$\tau(\zeta,z,\rho,\xi)=(-1/\bar \zeta,-\bar z/\bar\zeta^{2},e^{\bar z/\bar\zeta}(1/\bar \zeta)^{k+1}\bar\xi,e^{-\bar z/\bar\zeta}(-1/\bar \zeta)^{k+1}\bar\rho).$$
  The gluing of $U$ and $\tilde U$ is the twistor space of the multi-Taub-NUT metric up to rescaling.

  Notice that our convention is slightly different from \cite{CherkisKapustin}. We use the real form $i\partial\bar\partial K$ as the K\"ahler form but they use $\partial\bar\partial K$ following the convention of \cite{IvanovRocek}. The other difference is that they use the scaling parameter $\mu$ but we rescale our metric to make $\mu=1$.
  \end{example}

  Similarly, we can define the twistor space of a hyperk\"ahler 8-manifold:

  \begin{example}
  Let $U$ be the subvariety in $\mathbb{C}^7$ with coordinates $(\zeta,w,z,\rho_0,\rho_1,\xi_0,\xi_1)$ defined by
  $$(\rho_0+\rho_1\eta)(\xi_0+\xi_1\eta)=\prod_{\alpha=1}^{k}(\eta-P_{\alpha}(\zeta)) \mod \eta^2-w\eta-z=0,$$
  where $$P_{\alpha}(\zeta)=a_{\alpha}\zeta^2+2b_{\alpha}\zeta-\bar a_{\alpha}$$ with parameters $a_{\alpha}\in\mathbb{C}$ and $b_{\alpha}\in\mathbb{R}$.

  Take two copies of $U$ and glue them together over $\zeta\not=0,\infty$ by
  $$\tilde\zeta=\zeta^{-1},$$
  $$\tilde w=\zeta^{-2}w,\tilde z=\zeta^{-4}z$$
  $$(\tilde\rho_0+\tilde\rho_1\tilde\eta)=e^{-\eta/\zeta}\zeta^{-k}(\rho_0+\rho_1\eta)\mod \eta^2-w\eta-z=0,$$
  $$(\tilde\xi_0+\tilde\xi_1\tilde\eta)=e^{\eta/\zeta}\zeta^{-k}(\xi_0+\xi_1\eta)\mod \eta^2-w\eta-z=0,$$
  Then $\zeta$ lies in $\mathbb{CP}^1=\mathbb{C}\cup\{\infty\}$ and $z$ is a section in $\mathcal{O}(4)$.
  Define $$\omega=4i\sum_{j=1}^{2}\frac{(\mathrm{d}\rho_0+\beta_j\mathrm{d}\rho_1)\wedge\mathrm{d}\beta_j}{\rho_0+\beta_j\rho_1},$$
  where $\beta_1,\beta_2$ are the two roots of $\eta^2-w\eta-z=0$.
  Define the real structure by $$\tau(\zeta,z,\rho,\xi)=(-1/\bar \zeta,\bar z/\bar\zeta^{4},e^{ \bar\eta/\bar\zeta}(1/\bar \zeta)^{k}\bar\xi,e^{- \bar\eta/\bar\zeta}(-1/\bar \zeta)^{k}\bar\rho),$$ where $\rho=\rho_0+\rho_1\eta$ and $\xi=\xi_0+\xi_1\eta$.
  We can realize the gluing of $U$ and $\tilde U$ as the twistor space of a complete hyperk\"ahler 8-manifold.
  \end{example}

  The hyperk\"ahler quotient of the previous example is the Cherkis-Hitchin-Ivanov-Kapustin-Lindstr\"om-Ro\v{c}ek metric. When $k=0$, it's the famous Atiyah-Hitchin metric. Actually, the Atiyah-Hitchin metric \cite{AtiyahHitchin} provided the first example of ALF-$D_k$ gravitational instantons. Later, Ivanov and Ro\v{c}ek \cite{IvanovRocek} conjectured a formula for positive $k$ using generalized Legendre transform developed by Lindstr\"om and Ro\v{c}ek. Cherkis and Kapustin \cite{CherkisKapustin} confirmed this formula. This metric was computed more explicitly by Cherkis and Hitchin \cite{CherkisHitchin}.

  \begin{example}
  \label{Cherkis-Hitchin-Ivanov-Kapustin-Lindstrom-Rocek}
  In the previous example, we look at the $\mathbb{C}^*$ action by $\rho_j\rightarrow \lambda\rho_j$ and $\xi_j\rightarrow \lambda^{-1}\xi_j$. The moment map is $w$. To get the hyperk\"ahler quotient, we set $w=0$ and take the $\mathbb{C}^*$ quotient.

  The submanifold $w=0$ in $U$ can be written as
  $$\begin{array}{lcl} \rho_0\xi_0+z\rho_1\xi_1 & = & p(z),\\
  \rho_1\xi_0+\rho_0\xi_1 & = & q(z),\end{array} $$
  where $$\prod_\alpha(\eta-P_{\alpha})=p(z)+\eta q(z) \mod \eta^2-z=0.$$
  The $\mathbb{C}^*$-quotient can be obtained by using the $\mathbb{C}^*$-invariant coordinates
  $$\begin{array}{lcl} x & = & i^k[\rho_1\xi_0-\rho_0\xi_1],\\
  y & = & i^k[-2\rho_1\xi_1+r(z)],\end{array} $$
  where $$p(z)=zr(z)+\prod_{\alpha}(-P_{\alpha}).$$
  Thus $$\rho_1\xi_0=\frac{q(z)+(-i)^k x}{2},\rho_0\xi_1=\frac{q(z)-(-i)^k x}{2},$$
  $$\rho_1\xi_1=\frac{r(z)-(-i)^k y}{2},\rho_0\xi_0=\frac{zr(z)+(-i)^k zy}{2}+\prod_\alpha(-P_{\alpha}).$$
  The equation $$(\rho_0\xi_0)(\rho_1\xi_1)=(\rho_0\xi_1)(\rho_1\xi_0)$$ is reduced to
  $$x^2-zy^2=\frac{1}{-z}(\prod_{\alpha}(z-P^2_{\alpha})-\prod_{\alpha}(-P^2_{\alpha}))+2\prod_{\alpha}(-iP_{\alpha})y.$$
  Moreover,
  $$\omega=i\mathrm{d}(\frac{1}{\sqrt{z}}\log(\frac{yz+\prod_{\alpha}(-iP_{\alpha})
  +\sqrt{z}x}{yz+\prod_{\alpha}(-iP_{\alpha})-\sqrt{z}x}))\wedge\mathrm{d}z.$$
  This gives the twistor space of the Cherkis-Hitchin-Ivanov-Kapustin-Lindstr\"om-Ro\v{c}ek metric.
  \end{example}

  When $P_{\alpha}(\zeta)$ and $-P_{\alpha}(\zeta)$ are distinct, the manifold is non-singular. Otherwise, the CHIKLR metric is the minimal resolution of the singular manifold which will be discussed later. It's interesting to notice that \cite{ChalmersRocekWiles} when two $P_{\alpha}$ equal to 0, the singular manifold is the $\mathbb{Z}_2$-quotient of the multi-Taub-NUT metric. Moreover, when $k\ge 3$, if all of $P_{\alpha}$ equal to 0, the singular manifold is exactly the quotient of the Taub-NUT metric by the binary dihedral group $D_{4(k-2)}$ because of the following calculation:

  \begin{example}
  It's well known that the Taub-NUT metric is biholomorphic to $\mathbb{C}^2$. Let $u$, $v$ be the coordinates of $\mathbb{C}^2$. Define the action of the binary dihedral group $$D_{4(k-2)} = <\sigma,\tau|\sigma^{2k-4}=1,\sigma^{k-2}=\tau^2,\tau\sigma\tau^{-1}=\sigma^{-1}>$$ by
  $$\tau(u,v)=(v,-u), \sigma(u,v)=(e^{i\pi/(k-2)}u,e^{-i\pi/(k-2)}v).$$
  Then $$x=uv(u^{2k-4}-v^{2k-4})/2, y=(u^{2k-4}+v^{2k-4})/2, z=u^2v^2$$ are invariant under the action with the relationship
  $x^2-zy^2=-z^{k-1}$. This is exactly the previous example with all $P_{\alpha}=0$.
  \end{example}

\section{Deformation of hyperk\"ahler 4-manifolds}

  It's well known that in real dimension 4, the hyperk\"ahler condition is equivalent to the Calabi-Yau condition. So we can study the deformation theory by viewing them as Calabi-Yau manifolds. However, to keep track of the symmetry between three complex structures, we prefer a more direct approach inspired by the lecture of Sir Simon Donaldson in the spring of 2015 at Stony Brook University.

  \begin{lemma}
  A 4-manifold is hyperk\"ahler if and only if there exist three closed 2-forms $\omega^i$ satisfying
  $$\omega^i\wedge\omega^j=2\delta_{ij}V,$$
  where $V$ is a nowhere vanishing 4-form.
  \end{lemma}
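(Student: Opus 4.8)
The plan is to prove the two directions separately. For the ``only if'' direction, suppose $(M^4,g,I,J,K)$ is hyperk\"ahler. Then the three K\"ahler forms $\omega^i(\cdot,\cdot)=g(J_i\cdot,\cdot)$ (with $J_1=I$, $J_2=J$, $J_3=K$) are closed because each $J_i$ is parallel for the Levi-Civita connection. One then checks pointwise, by choosing at a given point an orthonormal frame adapted to the quaternionic structure (identifying $T_pM$ with $\mathbb{H}$ so that $I,J,K$ act by left multiplication by $i,j,k$), that $\omega^i\wedge\omega^j$ is symmetric in $i,j$ and equals $2\delta_{ij}\,\mathrm{dVol}_g$; setting $V=\mathrm{dVol}_g$, which is nowhere vanishing, gives the claimed identity. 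This is a short linear-algebra computation in $\Lambda^2\mathbb{H}^*$ and I would not belabor it.

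For the ``if'' direction — the substantive one — assume we are given three closed $2$-forms $\omega^i$ with $\omega^i\wedge\omega^j=2\delta_{ij}V$ for a nowhere vanishing $4$-form $V$. First, $V$ determines an orientation; fix it so that $V>0$. The key algebraic step is to reconstruct the metric and the three complex structures. Working pointwise: the three forms span a maximal positive-definite subspace of $\Lambda^2 T_p^*M$ with respect to the wedge-pairing, hence (after checking the forms are nondegenerate, which follows from $\omega^i\wedge\omega^i=2V\neq 0$) each $\omega^i$ is a symplectic form, and $\omega^1$ together with the condition that $\omega^2,\omega^3$ pair to zero with it cuts out a compatible almost complex structure. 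Concretely, define $I$ by requiring $\omega^2+i\omega^3$ to be of type $(2,0)$ with respect to $I$ and $\omega^1(\cdot,I\cdot)$ to be positive; equivalently use the standard fact that a symplectic form $\omega^1$ plus a complex volume form $\omega^+=\omega^2+i\omega^3$ with $\omega^1\wedge\omega^+=0$ and $\omega^+\wedge\bar\omega^+ = c\,(\omega^1)^2$ for the right constant $c>0$ determines a unique compatible $I$ making $\omega^1$ K\"ahler and $\omega^+$ a holomorphic volume form. The relations $\omega^i\wedge\omega^j=2\delta_{ij}V$ are exactly what is needed to make the three resulting almost complex structures $I,J,K$ satisfy the quaternion relations $IJ=K$, etc., and to make the induced metrics $g_i(\cdot,\cdot)=\omega^i(\cdot,J_i\cdot)$ all coincide — again a pointwise linear-algebra verification.

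It remains to see that the common metric $g$ is actually hyperk\"ahler, i.e.\ that $I,J,K$ are parallel, not merely that $g$ is Hermitian with respect to each. Here I would invoke the standard criterion: an almost Hermitian manifold whose K\"ahler form is closed and whose almost complex structure is integrable is K\"ahler. Integrability of $I$ comes for free because $\omega^+=\omega^2+i\omega^3$ is a nowhere-vanishing closed $(2,0)$-form (a closed $(2,0)$-form on a $4$-manifold forces the almost complex structure to be integrable — its $\bar\partial$ would be a $(1,2)$-form that must vanish for type reasons together with $d\omega^+=0$), and $\omega^1$ is closed by hypothesis; hence $(M,g,I)$ is K\"ahler, and symmetrically for $J$ and $K$. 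Three parallel complex structures with the quaternion relations make $g$ hyperk\"ahler.

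The main obstacle I anticipate is organizing the pointwise linear algebra cleanly: one must verify not just that each $\omega^i$ individually yields a compatible complex structure, but that the normalization constants forced by $\omega^i\wedge\omega^i=2V$ make the three metrics literally equal and the three complex structures quaternionic. The cleanest route is to trivialize $\Lambda^2_+$ and $\Lambda^2_-$ abstractly: the span of $\omega^1,\omega^2,\omega^3$ is a rank-$3$ subbundle of $\Lambda^2$ on which the wedge form is positive definite, so it must be the self-dual part $\Lambda^2_+$ for a unique conformal structure; the common normalization $\omega^i\wedge\omega^j = 2\delta_{ij} V$ then pins down the metric in that conformal class, and the hyperk\"ahler triple is recovered by the usual correspondence between orthonormal bases of $\Lambda^2_+$ and quaternionic structures. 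I would present the argument in that order, deferring the hands-on frame computation to a remark.
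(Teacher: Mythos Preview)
Your proposal is correct, but the route differs from the paper's in two respects. For the reconstruction of the metric, the paper proceeds exactly as in your final paragraph: declare the span of the $\omega^i$ to be the self-dual space, take its wedge-orthogonal complement as anti-self-dual, observe that this determines a star operator and hence a conformal class, and fix the representative by demanding that $V$ be the volume form. Your primary presentation instead builds $I$ from the pair $(\omega^1,\omega^2+i\omega^3)$ via the symplectic-form-plus-complex-volume-form recipe and then checks by hand that the three induced metrics agree; this works but is more laborious, and you yourself flag the bookkeeping as the main obstacle. For the parallelism of $I,J,K$, the paper simply invokes Lemma~6.8 of Hitchin's \emph{Self-duality equations on a Riemann surface}, whereas you supply an explicit argument: the nowhere-vanishing closed $(2,0)$-form $\omega^+$ forces the $(1,2)$-component of $d\omega^+$ (which is controlled by the Nijenhuis tensor) to vanish, giving integrability, and then closed plus integrable gives K\"ahler. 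Your argument is essentially what underlies Hitchin's lemma, so the gain is self-containment at the cost of length. One small point you elide and the paper makes explicit: from the algebra alone one only gets $IJ=\pm K$, and the sign is fixed by a choice of orientation (equivalently by replacing $\omega^3$ with $-\omega^3$ if necessary); the paper disposes of this by noting the two cases are disconnected.
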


  \begin{proof}
  Given three 2-forms, we can call the linear span of them the ``self-dual" space. The orthogonal complement of the ``self-dual" space under wedge product is called ``anti-self-dual" space. These two spaces determine a star operator. It's well known that the star operator determine a conformal class of metric. We can then determine the conformal factor by requiring $V$ to be the volume form. Using this metric and the three forms $\omega^i$, we can determine three almost complex structures $I$, $J$ and $K$. It's easy to see that $IJ=K$ or $IJ=-K$. Since the two cases are disconnected, we can without loss of generality assume that the first case happens. By Lemma 6.8 of \cite{Hitchin}, $I$,$J$,$K$ are parallel.
  \end{proof}

  Therefore, given a family of hyperk\"ahler metrics $\omega^i(t)$ on a fixed manifold $M$, the deformations $\theta^i=\frac{\mathrm{d}}{\mathrm{d}t}\omega^i(t)|_{t=0}$ satisfy
  $$\omega^i\wedge\theta^j+\omega^j\wedge\theta^i=0, i\not=j;$$
  $$\omega^1\wedge\theta^1=\omega^2\wedge\theta^2=\omega^3\wedge\theta^3.$$
  Notice that the anti-self-dual components of $\theta^i$ don't affect the equation, so we only need to look at the self-dual components. Let $$V=\{\mathbf{\theta}\in\Lambda^{+}\oplus\Lambda^{+}\oplus\Lambda^{+}:\omega^i\wedge\theta^j+\omega^j\wedge\theta^i=0, i\not=j;
  \omega^1\wedge\theta^1=\omega^2\wedge\theta^2=\omega^3\wedge\theta^3\}.$$
  Then $V$ is generated by the following basis:
  $$e_1:\theta^1=\omega^1,\theta^2=\omega^2,\theta^3=\omega^3;$$
  $$e_2:\theta^1=0,\theta^2=\omega^3,\theta^3=-\omega^2;$$
  $$e_3:\theta^1=-\omega^3,\theta^2=0,\theta^3=\omega^1;$$
  $$e_4:\theta^1=\omega^2,\theta^2=-\omega^1,\theta^3=0.$$
  Now we look at the action of diffeomorphism group. The infinitesimal diffeomorphism group $X$ acts simply by $\theta^i=L_{X}\omega^i=\mathrm{d}(X\rfloor \omega^i).$
  The projection of $L_{X}\omega^i$ to $V$ defines an operator $\mathcal{D}:\mathrm{Vect}(M)\rightarrow V$.
  Notice that $\mathcal{D}$ is canonically determined by $\omega^i$. In particular, if there is a symmetry group $G$, then $\mathcal{D}$ is also invariant under $G$.

  On $\mathbb{R}^4$,
  $$\omega^1=\mathrm{d}x^1\wedge\mathrm{d}x^2+\mathrm{d}x^3\wedge\mathrm{d}x^4,$$
  $$\omega^2=\mathrm{d}x^1\wedge\mathrm{d}x^3+\mathrm{d}x^4\wedge\mathrm{d}x^2,$$
  $$\omega^3=\mathrm{d}x^1\wedge\mathrm{d}x^4+\mathrm{d}x^2\wedge\mathrm{d}x^3.$$
  It's easy to compute that $$\mathcal{D}(f^1\frac{\partial}{\partial x^1}+f^2\frac{\partial}{\partial x^2}+f^3\frac{\partial}{\partial x^3}+f^4\frac{\partial}{\partial x^4})$$
  $$=(\frac{\partial f^1}{\partial x^1}+\frac{\partial f^2}{\partial x^2}+\frac{\partial f^3}{\partial x^3}+\frac{\partial f^4}{\partial x^4})e_1+(\frac{\partial f^1}{\partial x^2}-\frac{\partial f^2}{\partial x^1}+\frac{\partial f^3}{\partial x^4}-\frac{\partial f^4}{\partial x^3})e_2$$
  $$+(\frac{\partial f^1}{\partial x^3}-\frac{\partial f^2}{\partial x^4}-\frac{\partial f^3}{\partial x^1}+\frac{\partial f^4}{\partial x^2})e_3+(\frac{\partial f^1}{\partial x^4}+\frac{\partial f^2}{\partial x^3}-\frac{\partial f^3}{\partial x^2}-\frac{\partial f^4}{\partial x^1})e_4.$$

  So $\mathcal{D}\mathcal{D}^*=\Delta$ on $\mathbb{R}^4$.

  On the general hyperk\"ahler 4-manifold, suppose that $\mathcal{D}$ has full image, then we can without loss of generally assume that $\theta^i$ are all anti-self-dual. Notice that they must be closed as the variation of closed forms. They must also be co-closed since $\mathrm{d}(*\theta^i)=-\mathrm{d}\theta^i=0$.
  In other words, the deformation space of hyperk\"ahler 4-manifolds is a subspace of three copies of the space of anti-self-dual harmonic 2-forms. There may be further reductions if $(L_X\omega^1,L_X\omega^2,L_X\omega^3)$ is anti-self-dual harmonic for some vector field $X$.

 \section{Asymptotic behavior of gravitational instantons}

  In this section, we use the principles in the previous section to prove Main Theorem 1.

  To prove that $\mathcal{D}$ has full image, we instead prove that $L=\mathcal{D}\mathcal{D}^*$ has full image. Since $L$ is asymptotic to the Laplacian operator, it's enough to apply Lemma 4 of Minerbe's paper \cite{MinerbeMass} in ALF case and its generalization in ALG case.

  To get this generalization, we still use the decomposition of any tensor $\phi$ into $\mathbb{T}^k$-invariant part $\phi_1$ and the other part $\phi_2$ satisfying $\int_{\pi^{-1}(x)}\phi_2=0$. Notice that any $\mathbb{T}^k$-invariant operator $L$ preserves this decomposition.
  Actually, let $\Phi_t(x,\theta)=(x,\theta+t)$ be the diffeomorphism in local coordinates, then $L$ commutes with $\Phi_t^*$. So
  $$\Phi_t^*(L\phi_1)=L(\Phi_t^*\phi_1)=L\phi_1,$$ and
  $$\int_{t\in\mathbb{T}^k}\Phi_t^*(L\phi_2)=L\int_{t\in\mathbb{T}^k}\Phi_t^*\phi_2=0.$$

  Now let's state several lemmas first.
  \begin{lemma}
  For any $\delta\not=1$, there exists a bounded linear operator $$S:L^2_{\delta}([R,\infty))\rightarrow H^{1}_{\delta-2}([R,\infty))$$ with
  $||S||\le \frac{2}{|\delta-1|}+1$ such that $(Sf)'=f$ in the distribution sense.
  \end{lemma}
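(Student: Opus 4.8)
The plan is to construct $S$ explicitly by integration, choosing the base point of integration ($R$ or $\infty$) according to the sign of $\delta-1$, so that the resulting antiderivative lies in the correct weighted space. For a compactly supported smooth $f$ on $[R,\infty)$ — which suffices by the density statement quoted above — define $(Sf)(r)=\int_R^r f(s)\,ds$ when $\delta>1$ and $(Sf)(r)=-\int_r^\infty f(s)\,ds$ when $\delta<1$. In either case $(Sf)'=f$ pointwise, hence in the distribution sense, so the only real content is the norm bound. Since $\nabla(Sf)=f$ and the weight on the $\nabla\phi$ term of $H^1_{\delta-2}$ is $(\delta-2)+2=\delta$, the gradient part of $\|Sf\|_{H^1_{\delta-2}}$ equals exactly $\|f\|_{L^2_\delta}$; this contributes the ``$+1$'' in the stated operator norm. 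It remains to bound $\|Sf\|_{L^2_{\delta-2}}$ by $\frac{2}{|\delta-1|}\|f\|_{L^2_\delta}$.

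The key step is therefore a one-dimensional weighted Hardy inequality: for the chosen antiderivative,
$$\int_R^\infty |(Sf)(r)|^2\, r^{\delta-2}\, dr \;\le\; \frac{4}{(\delta-1)^2}\int_R^\infty |f(r)|^2\, r^{\delta}\, dr.$$
I would prove this by the standard Hardy argument: write $|(Sf)(r)|^2 = \left|\int f\right|^2$, apply Cauchy–Schwarz after inserting the weight $s^{\delta/2}\cdot s^{-\delta/2}$ inside the integral, and integrate by parts in $r$ against $r^{\delta-2}$, using that $(r^{\delta-1}/(\delta-1))' = r^{\delta-2}$. The sign choice of the base point is exactly what makes the boundary terms in the integration by parts vanish (at $R$ when $\delta>1$, at $\infty$ when $\delta<1$, using compact support at the other end) and what makes the remaining term have the right sign. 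One gets $\int |(Sf)|^2 r^{\delta-2}\,dr \le \frac{2}{|\delta-1|}\int |(Sf)|\,|f|\, r^{\delta-1}\,dr$, then Cauchy–Schwarz on the right splits off $\left(\int|(Sf)|^2 r^{\delta-2}\right)^{1/2}\left(\int |f|^2 r^{\delta}\right)^{1/2}$, and dividing through yields the constant $\frac{2}{|\delta-1|}$.

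Combining, $\|Sf\|_{H^1_{\delta-2}} \le \|Sf\|_{L^2_{\delta-2}} + \|f\|_{L^2_\delta} \le \left(\frac{2}{|\delta-1|}+1\right)\|f\|_{L^2_\delta}$ on the dense subspace of compactly supported smooth functions, so $S$ extends to a bounded operator on all of $L^2_\delta([R,\infty))$ with the asserted norm bound, and the identity $(Sf)'=f$ passes to the limit in the distribution sense. The main obstacle is purely the Hardy estimate — in particular getting the boundary terms to vanish, which forces the case split at $\delta=1$ and explains why the lemma excludes $\delta=1$ (there the weight $r^{\delta-2}=r^{-1}$ and $r^{\delta}=r$ are borderline and no such bounded antiderivative operator exists, as the constant $\frac{2}{|\delta-1|}$ blows up).
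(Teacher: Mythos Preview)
Your overall strategy---reduce to compactly supported $f$, choose the base point of integration according to the sign of $\delta-1$, and run a weighted Hardy inequality via integration by parts plus Cauchy--Schwarz---is exactly the paper's approach. However, you have the two cases \emph{reversed}, and this is not a cosmetic slip: with your assignment the operator fails to land in the target space.

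Concretely, for $\delta>1$ you set $Sf(r)=\int_R^r f$. But for a compactly supported $f$ with $\int f\neq 0$, this $Sf$ equals a nonzero constant for all large $r$, and since $\delta-2>-1$ the weight $r^{\delta-2}$ is not integrable at infinity; hence $Sf\notin L^2_{\delta-2}$ at all. Your claim that the boundary term at $\infty$ vanishes ``using compact support'' is wrong: it is $f$ that has compact support, not $Sf$, and $|Sf(r)|^2 r^{\delta-1}\to\infty$. Symmetrically, for $\delta<1$ your choice $Sf(r)=-\int_r^\infty f$ gives $Sf(R)=-\int_R^\infty f\neq 0$ in general; in the integration by parts the boundary term at $R$ then contributes $\frac{1}{1-\delta}R^{\delta-1}|Sf(R)|^2\ge 0$ on the wrong side, and there is no uniform bound on $|\int_R^\infty f|$ in terms of $\|f\|_{L^2_\delta}$ when $\delta<1$ (the dual weight $t^{-\delta}$ is not integrable).

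The correct assignment is the paper's: $Sf(r)=-\int_r^\infty f$ for $\delta>1$ (so $Sf\equiv 0$ past the support of $f$, killing the boundary term at $\infty$; the boundary term at $R$ then has the favorable sign $-\frac{1}{\delta-1}R^{\delta-1}|Sf(R)|^2\le 0$), and $Sf(r)=\int_R^r f$ for $\delta<1$ (so $Sf(R)=0$, killing that boundary term; at $\infty$, $Sf$ is constant but $r^{\delta-1}\to 0$). With this swap your Hardy argument goes through verbatim and yields the constant $\frac{2}{|\delta-1|}$ exactly as you wrote; the paper carries out the $\delta>1$ case explicitly in the same way and refers to its companion paper for $\delta<1$, but your direct Hardy computation handles both cases uniformly once the base points are corrected.
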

  \begin{proof}
  Since $(Sf)'=f$, it's enough to control the $L^{2}_{\delta-2}$-norm of $Sf$.
  We can further reduce to prove the same estimate for $f\in C^{\infty}_0$. So we can assume that $\mathrm{supp}(f)\subset[R_1,R_2]$ with $R<R_1<R_2<\infty$.
  If $\delta>1$, define $$Sf(r)=-\int^{\infty}_{r}f(t)\mathrm{d}t.$$
  So $$||Sf||_{L^2_{\delta-2}([R,\infty))}^2
  =\int_{R}^{\infty}[\int_{r}^{\infty}f(t)\mathrm{d}t]^2r^{\delta-2}\mathrm{d}r$$
  $$=\frac{2}{\delta-1}\int_{R}^{\infty}[\int_{r}^{\infty}f(t)\mathrm{d}t]f(r)r^{\delta-1}\mathrm{d}r
  -\frac{1}{\delta-1}R^{\delta-1}[\int_{R}^{\infty}f(t)\mathrm{d}t]^2$$
  $$\le\frac{2}{\delta-1}\sqrt{\int_{R}^{\infty}[\int_{r}^{\infty}f(t)\mathrm{d}t]^2r^{\delta-2}\mathrm{d}r}
  \sqrt{\int_{R}^{\infty}f^2(t)r^{\delta}\mathrm{d}r}-0$$
  by integral by parts and the Cauchy-Schwarz inequality.

  If $\delta<1$, define $$Sf(r)=\int^{r}_{R}f(t)\mathrm{d}t.$$
  Then $Sf$ is constant for $r>R_2$, and therefore belongs to $L^{2}_{\delta-2}$. Moreover, it's 0 for $r<R_1$.
  Therefore, we can apply the proof of Proposition 4.2 and Theorem 4.5 of our first paper \cite{FirstPaper} to get the required estimate.
  \end{proof}

  \begin{lemma}
  Suppose $\phi\in C^{\infty}(\overline{B_{\tilde R}\setminus B_R})$ is a tensor vanishing on the boundary satisfying $\int_{\pi^{-1}(x)}\phi=0$ for any $x\in B_{\tilde R}\setminus B_R$. Suppose $$\mathbf{L}=\mathbf{A}^{ij}\nabla_i\nabla_j+\mathbf{B}^{i}\nabla_i+\mathbf{C}$$ is a $\mathbb{T}^k$-invariant tensor-valued second order elliptic operator with $$|\mathbf{A}^{ij}-\delta^{ij}\mathbf{Id}|\le Cr^{-\epsilon},|\mathbf{B}^{i}|\le Cr^{-1-\epsilon},|\mathbf{C}|\le Cr^{-2-\epsilon}.$$
  Then as long as $R$ is large enough,
  $$\int_{B_{\tilde R}\setminus B_R}|\phi|^2r^{\delta}+\int_{B_{\tilde R}\setminus B_R}|\nabla\phi|^2r^{\delta}+\int_{B_{\tilde R}\setminus B_R}|\nabla^2\phi|^2r^{\delta}\le C\int_{B_{\tilde R}\setminus B_R}|L\phi|^2r^{\delta}$$
  for any $\delta\in\mathbb{R}$, with constant $C$ independent of $R$ and $\tilde R$.
  \label{estimate-of-oscillation-part}
  \end{lemma}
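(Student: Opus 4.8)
The plan is to compare $\mathbf{L}$ with the rough Laplacian $\Delta=\nabla^*\nabla$ acting on tensors, to prove the weighted estimate for $\Delta$ on tensors with vanishing fiber average by exploiting a uniform ``mass gap'' in the fiber directions, and then to pass to the second derivatives by a covering argument. It is precisely the hypothesis $\int_{\pi^{-1}(x)}\phi=0$ that makes the estimate valid for \emph{every} $\delta\in\mathbb{R}$, in contrast with the $\mathbb{T}^k$-invariant part, for which (as in the previous lemma) the exponent $\delta=1$ is exceptional. First I would write $\mathbf{L}=-\Delta+\mathbf{E}$ with $\mathbf{E}=(\mathbf{A}^{ij}-\delta^{ij}\mathbf{Id})\nabla_i\nabla_j+\mathbf{B}^i\nabla_i+\mathbf{C}$, so that $|\mathbf{E}\phi|\le C(r^{-\epsilon}|\nabla^2\phi|+r^{-1-\epsilon}|\nabla\phi|+r^{-2-\epsilon}|\phi|)$ and hence $\int|\mathbf{E}\phi|^2r^\delta\le CR^{-2\epsilon}Q$, where $Q=\int_{B_{\tilde R}\setminus B_R}(|\phi|^2+|\nabla\phi|^2+|\nabla^2\phi|^2)r^\delta$ is the (finite) left-hand side of the claim. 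Since $\int|\Delta\phi|^2r^\delta\le 2\int|\mathbf{L}\phi|^2r^\delta+2\int|\mathbf{E}\phi|^2r^\delta\le 2\int|\mathbf{L}\phi|^2r^\delta+CR^{-2\epsilon}Q$, it suffices to prove $Q\le C_0\int|\Delta\phi|^2r^\delta$ and then, for $R$ large, absorb the $Q$-term.

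The core step is the first-order estimate. For $R$ large the asymptotics of $h$ make every fiber $\pi^{-1}(x)$ a flat torus of diameter bounded above and below uniformly in $x$, so its first nonzero Laplace eigenvalue is bounded below by some $\Lambda>0$ independent of $x$. Since $\int_{\pi^{-1}(x)}\phi=0$ and $r$ is constant along fibers, integrating the fiberwise Poincaré inequality, weighted by $r^{\delta'}$, over the base yields the uniform mass gap $\int_{B_{\tilde R}\setminus B_R}|\phi|^2r^{\delta'}\le\Lambda^{-1}\int_{B_{\tilde R}\setminus B_R}|\nabla\phi|^2r^{\delta'}$ for every $\delta'\in\mathbb{R}$ (a Fourier decomposition along the torus action makes this transparent: the nonzero modes see a potential $\gtrsim1$). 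Now integrate $\nabla^*\nabla$ by parts; the boundary term drops because $\phi|_{\partial}=0$, leaving $\int|\nabla\phi|^2r^\delta=\int\langle\Delta\phi,\phi\rangle r^\delta-\delta\int r^{\delta-1}\langle\nabla_{\nabla r}\phi,\phi\rangle$. I would estimate the first term by $\eta\int|\phi|^2r^\delta+\frac{1}{4\eta}\int|\Delta\phi|^2r^\delta$ and the second, using $|\nabla r|\le C$, by $\eta'\int|\nabla\phi|^2r^\delta+C_{\eta'}\int|\phi|^2r^{\delta-2}$, and then apply the mass gap with weights $r^\delta$ and $r^{\delta-2}\le R^{-2}r^\delta$ to dominate $\int|\phi|^2r^\delta$ and $\int|\phi|^2r^{\delta-2}$ by $\Lambda^{-1}\int|\nabla\phi|^2r^\delta$ and $\Lambda^{-1}R^{-2}\int|\nabla\phi|^2r^\delta$ respectively. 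Choosing $\eta,\eta'$ small and $R$ large drives the coefficient of $\int|\nabla\phi|^2r^\delta$ on the right below $1$, so $\int|\nabla\phi|^2r^\delta\le C\int|\Delta\phi|^2r^\delta$, and hence $\int|\phi|^2r^\delta\le C\int|\Delta\phi|^2r^\delta$ by the mass gap once more.

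For the second derivatives I would use the cover $\{B_{\kappa r_i}(p_i)\}$ satisfying the very good conditions: on each ball the rescaled metric is uniformly $C^\infty$-close to a flat model, so the scale-invariant interior $L^2$ estimate for the Laplace-type operator $\Delta$, namely $\int_{B_{\kappa r_i/2}(p_i)}|\nabla^2\phi|^2\le C(\int_{B_{\kappa r_i}(p_i)}|\Delta\phi|^2+(\kappa r_i)^{-2}\int_{B_{\kappa r_i}(p_i)}|\nabla\phi|^2+(\kappa r_i)^{-4}\int_{B_{\kappa r_i}(p_i)}|\phi|^2)$, holds with $C$ uniform in $i$; for the balls meeting $\{r=R\}$ or $\{r=\tilde R\}$ one uses instead the Dirichlet boundary version, which is legitimate since $\phi$ vanishes there. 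Multiplying by $r_i^\delta$ (comparable to $r^\delta$ on $B_{\kappa r_i}(p_i)$), summing over $i$ with bounded overlap, and using $r^{-2}\le R^{-2}$, $r^{-4}\le R^{-4}$ together with the first-order estimate gives $\int|\nabla^2\phi|^2r^\delta\le C\int|\Delta\phi|^2r^\delta$. Combining the three bounds yields $Q\le C_0\int|\Delta\phi|^2r^\delta$, and the reduction above completes the proof with $C$ independent of $R$ and $\tilde R$.

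I expect the first-order estimate to be the main obstacle: the terms produced by differentiating the weight $r^\delta$ are lower order by only one power of $r$, so without additional input they would block the bound near a critical exponent — exactly the mechanism forcing $\delta=1$ out in the previous lemma — and it is the uniform fiber mass gap $\Lambda>0$ that rescues the argument by allowing one to trade powers of $r$ for powers of $R$. The two points needing care are therefore the uniformity of $\Lambda$ (the fibers must neither collapse nor expand as $r\to\infty$) and the uniformity, across all scales $R\le r\le\tilde R$, of the constants in the local elliptic estimates; both are guaranteed by the very good conditions and the known asymptotics of $h$.
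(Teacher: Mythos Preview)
Your argument is correct and rests on the same mechanism as the paper's proof: the vanishing fiber average gives a uniform Poincar\'e-type (``mass gap'') inequality along the bounded $\mathbb{T}^k$-fibers, which is precisely what makes the estimate hold for \emph{all} $\delta$; one then combines this with standard local elliptic regularity and a perturbation from $\Delta$ to $\mathbf{L}$ using the smallness of $\mathbf{A}^{ij}-\delta^{ij}\mathbf{Id}$, $\mathbf{B}^i$, $\mathbf{C}$ for large $R$.

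The organization, however, differs from the paper's. The paper first invokes interior elliptic regularity (Gilbarg--Trudinger) to bound $\int(|\nabla\phi|^2+|\nabla^2\phi|^2)r^\delta$ by $\int(|\phi|^2+|L\phi|^2)r^\delta$, and then controls $\int(|\phi|^2+|\nabla\phi|^2)r^\delta$ by localizing with a $\mathbb{T}^k$-invariant partition of unity $\{\chi_i\}$ subordinate to the good cover and applying to each $\chi_i\phi$ a single local inequality $\int|\chi_i\phi|^2\le C\int|\nabla(\chi_i\phi)|^2\le C\int|\Delta(\chi_i\phi)|^2$ (Theorem~4.9 of their first paper, which already packages the fiberwise Poincar\'e); the cutoff errors, being $O(r^{-1})$ and $O(r^{-2})$, are absorbed for $R$ large. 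You instead run the fiberwise Poincar\'e globally, then do a single weighted integration by parts to get $\int|\nabla\phi|^2r^\delta\le C\int|\Delta\phi|^2r^\delta$, and only afterwards appeal to the cover for the second-order bound. Your route is a bit more self-contained (it does not invoke the black-box local estimate from the first paper) and makes the role of the spectral gap $\Lambda>0$ explicit; the paper's route is shorter because that lemma from the first paper does both steps at once. Both are sound.
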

  \begin{proof}
  It's easy to see that
  $$\int_{B_{\tilde R}\setminus B_R}|\nabla^2\phi|^2r^{\delta}+\int_{B_{\tilde R}\setminus B_R}|\nabla\phi|^2r^{\delta}\le C(\int_{B_{\tilde R}\setminus B_R}|\phi|^2r^{\delta}+\int_{B_{\tilde R}\setminus B_R}|L\phi|^2r^{\delta})$$
  by Theorem 9.11 of \cite{GilbargTrudinger}. By our Theorem 4.9 of \cite{FirstPaper}, if $\phi$ is compactly supported in $B_{\kappa r_i(p_i)}$,
  $$\int_{(B_{\tilde R}\setminus B_R)\cap B_{\kappa r_i(p_i)}}|\phi|^2\le C\int_{(B_{\tilde R}\setminus B_R)\cap B_{\kappa r_i(p_i)}}|\nabla\phi|^2\le C\int_{(B_{\tilde R}\setminus B_R)\cap B_{\kappa r_i(p_i)}}|\Delta\phi|^2.$$
  Therefore
  $$\int|\phi|^2r^{\delta}+\int|\nabla\phi|^2r^{\delta}\le C\sum(\int|\chi_i\phi|^2r^{\delta}+\int|\nabla(\chi_i\phi)|^2r^{\delta})$$
  $$\le C\sum\int|\Delta(\chi_i\phi)|^2r^{\delta}\le C\sum(\int\chi_i^2|\Delta\phi|^2r^{\delta}+\int|\nabla\chi_i|^2|\nabla\phi|^2r^{\delta}+\int|\Delta\chi_i|^2|\phi|^2r^{\delta}).$$
  Here, the first inequality holds because $R$ is large enough and by our first paper \cite{FirstPaper}, we can choose the charts properly so that the number of charts overlapping at any given point is uniformly bounded.

  Notice that $\nabla\chi_i=O(r^{-1})$ and $\nabla^2\chi_i=O(r^{-2})$. By canceling terms, we can prove the theorem for $L=\Delta=\mathrm{Tr}\nabla^*\nabla$. By the same reasons, it can be generalized to more general operator $L$ whose coefficients equal to the Laplacian operator plus small error terms.
  \end{proof}

  From the approximation by $\phi_n=\phi\chi(r-n)$, the condition in Lemma \ref{estimate-of-oscillation-part} that $\phi\in C^{\infty}(\overline{B_{\tilde R}\setminus B_R})$ vanishes on the boundary can be replaced by the condition that $\phi\in C^{\infty}(\overline{B_R^c})$ vanishes on $\partial B_R$ and $\phi, \nabla\phi, \nabla^2\phi\in L^2_{\delta}$.

  Notice that the estimate in Lemma \ref{estimate-of-oscillation-part} doesn't scale correctly. Therefore, we must have the following fact:

  \begin{theorem}
  If $\phi\in H^{2}_{\delta}(B_R^{c})$ satisfies $L\phi=0$. Then $\phi$ is $\mathbb{T}^k$-invariant plus exponentially decay term.
  \label{exponential-decay}
  \end{theorem}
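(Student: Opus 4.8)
The plan is to exploit the decomposition $\phi=\phi_1+\phi_2$ into $\mathbb{T}^k$-invariant part and the mean-zero part, together with the fact that $L$ is $\mathbb{T}^k$-invariant and hence preserves the decomposition. Since $L\phi=0$ forces $L\phi_1=0$ and $L\phi_2=0$ separately, and $\phi_1$ is already $\mathbb{T}^k$-invariant, it suffices to show that the mean-zero piece $\phi_2$ decays exponentially. So from now on I would assume $\int_{\pi^{-1}(x)}\phi=0$ and aim to show $\phi$ decays like $e^{-\delta r}$ for some $\delta>0$ depending only on the geometry of the end.

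The key step is a Rellich-type monotonicity/ODE argument fed by Lemma \ref{estimate-of-oscillation-part}. First I would set $f(\tilde R)=\int_{B_{\tilde R}\setminus B_R}(|\phi|^2+|\nabla\phi|^2+|\nabla^2\phi|^2)r^\delta$ for a fixed but arbitrary weight $\delta$; applying Lemma \ref{estimate-of-oscillation-part} on the annulus $B_{\tilde R}\setminus B_R$ with $\phi$ cut off to vanish on both boundary spheres (the cutoff errors are controlled by $\nabla\chi=O(r^{-1})$, $\nabla^2\chi=O(r^{-2})$, which are lower order relative to the weight being tracked), and using $L\phi=0$, one gets that the weighted $H^2$-energy of $\phi$ on an annulus is bounded by a constant times the same energy concentrated near the two boundary spheres. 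The crucial observation, flagged in the sentence immediately preceding the theorem, is that the estimate in Lemma \ref{estimate-of-oscillation-part} does \emph{not} scale correctly: the right-hand side carries the same weight $r^\delta$ as the left rather than $r^{\delta-4}$, so the inequality improves as one passes to larger and larger annuli. Concretely, I would derive from this a differential inequality of the form $F(R')\le C\,R'F'(R')$ after optimizing over the choice of where to put the cutoffs, or more directly an inequality comparing the energy on $B_{2R}\setminus B_R$ to a definite fraction of the energy on $B_{4R}\setminus B_{R/2}$, which iterates to give that the energy on dyadic annuli decays geometrically, i.e.\ exponentially in $r$. Since $\delta$ was arbitrary, this shows the decay beats every polynomial rate, and the uniform constants then upgrade polynomial-beating decay of the $L^2$ energy to genuine pointwise exponential decay $|\nabla^m\phi|=O(e^{-\delta r})$ via the local elliptic estimates (Theorem 4.9 of \cite{FirstPaper}) and Sobolev embedding on the uniformly-sized charts $B_{\kappa r_i}(p_i)$.

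The main obstacle I expect is pinning down the optimal exponential rate $\delta$ and making the iteration clean rather than just extracting \emph{some} exponential rate. For the bare statement as written — "$\mathbb{T}^k$-invariant plus exponentially decaying term" — one only needs \emph{some} $\delta>0$, so the rough iteration suffices; but one must be careful that the constant $C$ in Lemma \ref{estimate-of-oscillation-part} is genuinely independent of $R$ and $\tilde R$ (which is asserted there) so that the geometric ratio in the iteration is a fixed number less than $1$ and does not degrade to $1$ as annuli move to infinity. A secondary technical point is justifying the integration by parts / cutoff manipulation for $\phi$ that is only known to lie in $H^2_\delta(B_R^c)$ rather than to be compactly supported; here I would invoke the remark just after Lemma \ref{estimate-of-oscillation-part} that the boundary-vanishing hypothesis can be relaxed to $\phi,\nabla\phi,\nabla^2\phi\in L^2_\delta$ via the approximation $\phi_n=\phi\,\chi(r-n)$, so the argument applies verbatim to our $\phi$.
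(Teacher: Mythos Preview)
Your overall strategy---reduce to the mean-zero part and exploit Lemma~\ref{estimate-of-oscillation-part}---is exactly right, but the iteration you set up only yields polynomial decay, not exponential. The tell is your choice of cutoffs: you take $\nabla\chi=O(r^{-1})$, i.e.\ cutoffs supported on dyadic-width annuli, and then compare energies on $B_{2R}\setminus B_R$ and $B_{4R}\setminus B_{R/2}$. Geometric decay along dyadic annuli means decay like $c^{\log_2(r/R)}=(r/R)^{\log_2 c}$, which is a power of $r$; likewise your differential inequality $F(R')\le C R'F'(R')$ integrates to $F(R')\lesssim (R')^{-1/C}$. Saying ``$\delta$ was arbitrary, so this beats every polynomial'' does not close the gap either: decay faster than every polynomial (with constants depending on the exponent) does not imply exponential decay---think of $e^{-\sqrt{r}}$.

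The fix is to use \emph{unit-width} cutoffs, so that $\nabla\chi=O(1)$ rather than $O(r^{-1})$. Apply Lemma~\ref{estimate-of-oscillation-part} (in its infinite-domain form, via the remark you already cite) to $(1-\chi(r-R))\phi$. Since $L\phi=0$, the right-hand side is supported in the unit-width shell $\{R+1<r<R+2\}$ and is controlled there, via interior elliptic estimates, by $\int_{R<r<R+3}|\phi|^2 r^\delta$. This yields
\[
\int_{r>R+2}|\phi|^2 r^\delta \;\le\; C\int_{R<r<R+3}|\phi|^2 r^\delta
\]
with $C$ independent of $R$. Setting $F(R)=\int_{r>R}|\phi|^2 r^\delta$, this reads $F(R+2)\le C(F(R)-F(R+3))$, hence $F(R+3)\le \tfrac{C}{C+1}F(R)$, a genuine exponential recursion in $R$. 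Pointwise exponential decay then follows from local elliptic estimates as you indicate. This is precisely the route the paper takes.
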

  \begin{proof}
  We can assume $\int_{\pi^{-1}(x)}\phi=0$ and prove that $\phi$ decay exponentially. For any $R$ large enough, we can apply Lemma \ref{estimate-of-oscillation-part} to $(1-\chi(r-R))\phi$.
  Therefore, $$\int_{r>R+2}|\phi|^2r^{\delta}\le C\int_{r>R}|L((1-\chi(r-R))\phi)|^2r^{\delta}$$
  $$\le C \int_{R+1<r<R+2}(|\nabla^2\phi|^2+|\nabla\phi|^2+|\phi|^2)r^{\delta}
  \le C \int_{R<r<R+3}|\phi|^2r^{\delta}$$
  for some constant $C$ independent of $R$. The last inequality holds by Theorem 9.11 of \cite{GilbargTrudinger}.
  So $\int_{r>R}|\phi|^2r^{\delta}$ decay exponentially. $\phi$ also decay exponentially in $L^{\infty}$ norm by Theorem 9.20 of \cite{GilbargTrudinger}.
  \end{proof}

  Now we are able to prove the following generalization of Lemma 4 of Minerbe's paper \cite{MinerbeMass}.

  \begin{theorem}
  As long as $30\delta$ is not an integer, there exists a bounded operator $G_L:L^{2}_{\delta}(B_R^c)\rightarrow H^{2}_{\delta-4}(B_R^c)$ such that $L(G_L \phi)=\phi$.
  \label{full-image}
  \end{theorem}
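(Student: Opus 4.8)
The plan is to build the right inverse $G_L$ by separately inverting $L$ on the $\mathbb{T}^k$-invariant part $\phi_1$ and on the oscillating part $\phi_2$, exploiting the decomposition already introduced in the text, and then summing. For the oscillating part, Theorem~\ref{exponential-decay} together with Lemma~\ref{estimate-of-oscillation-part} says that on $\phi_2$ the operator $L$ has no kernel and satisfies a coercive a priori estimate $\|\phi_2\|_{H^2_{\delta-4}}\le C\|L\phi_2\|_{L^2_\delta}$ on exterior domains; combined with the corresponding adjoint estimate (for $L^*$, which has the same structure) this gives surjectivity of $L$ on the oscillating sector by the standard functional-analytic argument (closed range plus trivial cokernel), and the right inverse is bounded. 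So the real content is the $\mathbb{T}^k$-invariant part.

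For the invariant part I would push everything down to the base. A $\mathbb{T}^k$-invariant tensor on $E$ is (a finite collection of) functions on the base $C(X)\setminus B_R$, i.e. on a truncated cone, and $L$ descends to a second-order elliptic system there that is asymptotic to the Euclidean/conical Laplacian. On such a cone one separates variables: expand in eigenfunctions of the cross-section $X$, so that each mode solves an ODE in $r$ of Euler type, $u'' + \frac{n-1}{r}u' - \frac{\mu_j}{r^2}u \approx r^{-\delta+\ldots}$. The indicial roots are the exponents $\gamma$ with $\gamma(\gamma+n-2)=\mu_j$; the weight $\delta$ (more precisely $\delta-4$, matching the shift in the statement and in the $H^2$ definition) is \emph{non-indicial} exactly when it avoids a discrete set, which is the role of the hypothesis that ``$30\delta$ is not an integer'' — a crude but safe way of saying $\delta$ misses the exceptional weights (for the flat/ALF model the relevant exponents are half-integers or integers, hence the factor). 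Lemma~3 (the one-dimensional operator $S$ with $(Sf)'=f$ and norm $\le \frac{2}{|\delta-1|}+1$) is precisely the tool to integrate these ODEs mode by mode with the correct weighted bound, the condition $\delta\ne 1$ there being the indicial-root condition for the lowest mode. Summing the modes with the uniform-in-$j$ bounds gives a bounded solution operator for the model on the cone, and then a perturbation/patching argument — cut off near $\partial B_R$, absorb the small error terms $O(r^{-\epsilon})$ coming from $\mathbf{A}^{ij}-\delta^{ij}$, $\mathbf B$, $\mathbf C$ into the model inverse via Neumann series since they are lower order with a genuinely decaying coefficient — upgrades it to a right inverse for the actual $L$ on the invariant sector.

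Finally I would glue: given $\phi\in L^2_\delta(B_R^c)$, write $\phi=\phi_1+\phi_2$, solve $L\psi_1=\phi_1$ in the invariant sector and $L\psi_2=\phi_2$ in the oscillating sector, set $G_L\phi=\psi_1+\psi_2$, and check $\psi_i\in H^2_{\delta-4}$ with norm controlled by $\|\phi\|_{L^2_\delta}$; since $L$ preserves the decomposition, $L(G_L\phi)=\phi$. Regularity ($H^2$ rather than merely $L^2$) comes for free from interior elliptic estimates (Theorem~9.11 of \cite{GilbargTrudinger}) as already used in Lemma~\ref{estimate-of-oscillation-part}. The main obstacle I anticipate is the invariant part: getting the mode-by-mode ODE estimates \emph{uniform} in the cross-section eigenvalue $\mu_j$ (so that the series converges in the weighted $H^2$ norm), and correctly identifying which weights are excluded — i.e. pinning down the indicial roots of the conical model in each of the ALF-$A_k$, ALF-$D_k$ and ALG geometries and verifying that the stated genericity of $\delta$ really avoids all of them, including at the lowest ($\mathbb{T}^k$-constant) mode where Lemma~3's $\delta\ne1$ enters. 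Handling the exactly-conical ALG case, where the cross-section is a flat torus bundle that may be singular (orbifold) over $X$, will require a little extra care, but the separation-of-variables scheme is unchanged.
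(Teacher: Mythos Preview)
Your overall strategy matches the paper's: split $\phi=\phi_1+\phi_2$ into the $\mathbb{T}^k$-invariant and oscillating parts, and invert $L$ separately on each sector. For the invariant part your sketch (push down to the base cone, separate variables, solve Euler-type ODEs mode by mode with Lemma~4.1 handling the radial integration, and require that $\delta$ avoid the indicial exponents) is exactly what the paper means when it says ``use the spectral decomposition as in Theorem~4.5 of our first paper''; your identification of the non-integrality hypothesis as a crude way of avoiding indicial roots is correct.

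The genuine difference is in the oscillating sector. You propose an abstract argument: Lemma~\ref{estimate-of-oscillation-part} gives a coercive a priori bound, hence closed range and trivial kernel; the same for $L^*$; therefore $L$ is onto. The paper instead gives a direct, constructive proof: first reduce to $L=\Delta$, then for each finite $\tilde R$ solve the Dirichlet problem $\Delta\psi=\phi$ on the annulus $B_{\tilde R}\setminus B_R$ (existence in $H^1_0$, then regularity via Theorem~8.13 of \cite{GilbargTrudinger}), discard the invariant part, apply Lemma~\ref{estimate-of-oscillation-part} to get a uniform weighted bound, and pass to the limit $\tilde R\to\infty$ by Rellich compactness plus a diagonal argument; uniqueness of the limit again comes from Lemma~\ref{estimate-of-oscillation-part}. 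What the paper's route buys is that it never has to confront the functional-analytic subtleties your argument hides: you would need to specify the domain of $L$ (i.e.\ a boundary condition on $\partial B_R$), identify the dual weight and the adjoint boundary condition, and check that Lemma~\ref{estimate-of-oscillation-part} really applies to $L^*$ on the dual space --- none of which is hard, but all of which is bypassed by the paper's elementary limit-of-bounded-domains construction. Conversely, your abstract argument, once those details are filled in, is cleaner and makes the role of the a priori estimate more transparent.
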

  \begin{proof}
  It's enough to prove the same thing for $L=\Delta=\mathrm{Tr}\nabla^*\nabla$ and for smooth tensor $\phi$.
  For $\mathbb{T}^{k}$-invariant part, we can use the spectral decomposition as in Theorem 4.5 of our first paper \cite{FirstPaper}.
  For the other part, we can solve the equation $\Delta\psi=\phi$ in $B_{\tilde R}\setminus B_R$ and $\psi=0$ on $\partial(B_{\tilde R}\setminus B_R)$. It's solvable because we can solve it in $H^{1}_0$ first, i.e $(\nabla \psi,\nabla \xi)=(\phi,\xi)$. Then Theorem 8.13 of \cite{GilbargTrudinger} implies that $\psi\in C^{\infty}(\overline{B_{\tilde R}\setminus B_R})$ and vanishes on the boundary. After throwing away the $\mathbb{T}^k$-invariant part, we can apply Lemma \ref{estimate-of-oscillation-part}. Now let $\tilde R$ goes to infinity. We can get a sequence of $\psi_{\tilde R}$. A subsequence converges to a function $\psi_\infty$ in $H^1(B_{\tilde R}\setminus B_R)$ for any $\tilde R$ by Rellich lemma and the diagonal argument. $\psi_\infty$ is a generalized solution since we define derivatives in distribution sense. Notice that actually $\psi_\infty, \nabla\psi_\infty, \nabla^2\psi_\infty\in L^2_{\delta}(B_R^c)$. $\psi_\infty$ also lies in $C^{\infty}(\overline{B_R^c})$ and equals to 0 on $\partial B_R$ by Theorem 8.13 of \cite{GilbargTrudinger}. Therefore, we can apply Lemma \ref{estimate-of-oscillation-part} to $\phi_\infty$. In particular, the difference of two $\psi_\infty$ must be 0. In other words, $\psi_\infty$ is independent of the choice of subsequence. We call that $G_L\phi$.
  \end{proof}

  We are ready to prove Main Theorem 1.

  \begin{theorem}
  Any ALF-$D_k$ gravitational instanton $(M,g)$ is asymptotic to the standard model $(E,h)$ of order 3 in the sense of Section 2 of \cite{FirstPaper}.
  \label{asymptotic-rate-ALF-Dk}
  \end{theorem}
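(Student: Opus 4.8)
The plan is to run a nonlinear iteration built on the deformation operator $\mathcal{D}$ of Section~3 together with the weighted elliptic theory of Lemma~\ref{estimate-of-oscillation-part} and Theorems~\ref{exponential-decay} and~\ref{full-image}. By Section~2 of \cite{FirstPaper} we may start from a diffeomorphism $\Phi\colon E\to M\setminus K$ with $\theta^i:=\Phi^*\omega^i-h^i=O'(r^{-\epsilon})$, where $h^1,h^2,h^3$ are the K\"ahler forms of the model metric $h$. Since $h^i+\theta^i$ is again a hyperk\"ahler triple, expanding $\omega^i\wedge\omega^j=2\delta_{ij}V$ shows that the $h$-self-dual part of $\theta=(\theta^1,\theta^2,\theta^3)$ lies in the subspace $V\subset\Lambda^+\oplus\Lambda^+\oplus\Lambda^+$ of Section~3 up to terms quadratic in $\theta$: writing $\theta=\theta_V+\theta_W+\theta_-$ with $\theta_V\in V$, $\theta_W$ the remaining self-dual part and $\theta_-$ anti-self-dual, one gets $\theta_W=O'(r^{-2\epsilon})$; moreover $\mathrm{d}\theta^i=0$ exactly, being a difference of closed forms.

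The first step is to gauge away $\theta_V$. The operator $L=\mathcal{D}\mathcal{D}^*$ differs from the rough Laplacian by error terms of exactly the size allowed in Lemma~\ref{estimate-of-oscillation-part}, so Theorem~\ref{full-image} produces, for a suitable non-exceptional weight, a solution of $Lu=\theta_V$ on $B_R^c$; then $X:=\mathcal{D}^*u$ satisfies $\mathcal{D}X=\theta_V$. Replacing $\Phi$ by its composition with the time-one flow of a cut-off of $-X$ and absorbing the nonlinear corrections, we obtain a new diffeomorphism whose error is $\theta_-+O'(r^{-2\epsilon})$ with $\theta_-$ anti-self-dual and, from $\mathrm{d}\theta^i=0$, $\mathrm{d}\theta^i_-=-\mathrm{d}(\theta^i_V+\theta^i_W)=O'(r^{-2\epsilon-1})$; since $\theta^i_-$ is anti-self-dual, $\mathrm{d}^*\theta^i_-=\pm *\,\mathrm{d}\theta^i_-=O'(r^{-2\epsilon-1})$ as well, hence $\Delta\theta^i_-=O'(r^{-2\epsilon-2})$.

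The heart of the argument is that such an almost-harmonic anti-self-dual form must decay like $O'(r^{-3})$. Decompose $\theta^i_-$ into its $\mathbb{T}^1$-invariant part and its complement; by Theorem~\ref{exponential-decay} the complement decays exponentially. Writing $\theta^i_-=G_\Delta(\Delta\theta^i_-)+\kappa^i$ with $\kappa^i$ harmonic, Theorem~\ref{full-image} gives $G_\Delta(\Delta\theta^i_-)=O'(r^{-2\epsilon})$ --- a genuine gain --- while $\kappa^i$ is a harmonic anti-self-dual triple, i.e.\ an infinitesimal hyperk\"ahler deformation on the end. On the $\mathrm{ALF}$-$D_k$ end the slow anti-self-dual harmonic modes, namely those of rate $O(r^{-1})$ and $O(r^{-2})$, are all pure gauge: each is of the form $L_X\omega^i$ for a suitable $X\in\ker\mathcal{D}$ (equivalently, absorbed into the choice of asymptotic parameters of the model), so after one further diffeomorphism the remainder of $\kappa^i$ decays like $O'(r^{-3})$. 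Feeding this back improves the full error from $O'(r^{-\epsilon})$ to $O'(r^{-\min(3,\,2\epsilon)})$.

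Finally I would iterate: with input rate $O'(r^{-a})$ the same procedure yields $O'(r^{-\min(3,\,2a)})$, so after finitely many steps the rate stabilizes at $3$; the successive gauge transformations compose to a single diffeomorphism because their generators decay ever faster, and a last round together with the interior Schauder and $L^\infty$ estimates of \cite{GilbargTrudinger} used above promotes the bound to $|\nabla^m Err|=O(r^{-3-m})$ for all $m\ge 0$, which is the claimed order-$3$ asymptotics. I expect the main obstacle to be the last two assertions of the previous paragraph: keeping the weights off the exceptional set of Theorem~\ref{full-image} at every stage, and --- most delicately --- establishing that the $O(r^{-1})$ and $O(r^{-2})$ anti-self-dual harmonic modes on the $D_k$ end are genuinely pure gauge rather than true obstructions, since this is precisely what pins the asymptotic order at $3$ and not lower.
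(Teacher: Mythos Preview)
Your overall architecture matches the paper's closely: gauge away the $V$-component of the self-dual error via $X=\mathcal{D}^*G_L\theta_V$, then analyze the anti-self-dual remainder by splitting into $\mathbb{T}^1$-invariant and oscillatory parts, killing the latter with Theorem~\ref{exponential-decay}, and iterating. The discrepancy is entirely in the step you yourself flag as delicate, and there the mechanism you propose is not the one that actually works.

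The assertion that the $O(r^{-1})$ and $O(r^{-2})$ anti-self-dual harmonic modes on the $D_k$ end are ``pure gauge'' (i.e.\ of the form $L_X\omega^i$, or absorbable into model parameters) is not correct as stated and is not how the paper proceeds. On an ALF-$A_k$ end those $O(r^{-2})$ modes are \emph{not} pure gauge---this is exactly why the $A_k$ argument only yields order~$2$---so the $D_k$ improvement cannot come from a gauge argument that would apply equally well to $A_k$. What the paper does instead is make the $\mathbb{T}^1$-invariant anti-self-dual part completely explicit: writing it as $\phi=\alpha\wedge\eta-V*_{\mathbb{R}^3}\alpha$ for a $1$-form $\alpha$ on $\mathbb{R}^3$, the problem reduces to a decaying harmonic $1$-form $\tilde\alpha$ on $\mathbb{R}^3\setminus B_R$. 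Two separate facts then pin the rate at~$3$. First, the $\mathbb{Z}_2$-action defining the $D_k$ end forces the coefficients of $\tilde\alpha$ to be \emph{even} functions of $\mathbf{x}$, which rules out the $O(r^{-2})$ spherical-harmonic layer entirely (these have odd coefficients); this is a parity argument, not a gauge argument. Second, the surviving leading candidate $\tilde\alpha\sim(a\,\mathrm{d}x^1+b\,\mathrm{d}x^2+c\,\mathrm{d}x^3)/r$ is not closed: $\mathrm{d}(\mathrm{d}x^i/r)=O(r^{-2})$. Since the iteration drives the decay of $\mathrm{d}\alpha$ (equivalently $\mathrm{d}\theta^i_-$) arbitrarily close to $r^{-3}$, one concludes $a=b=c=0$, so $\tilde\alpha=O'(r^{-3})$. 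Neither of these two ingredients is a diffeomorphism absorption; you should replace the ``pure gauge'' claim by this explicit reduction to $\mathbb{R}^3$ and the parity/closedness pair.
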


  \begin{proof}
  We already proved that $M$ is asymptotic to $(E,h)$ with error $O'(r^{-\epsilon})$. We will improve the decay rate slightly and iterate the improvement. The decay rates are in $L^\infty$ sense. However, they are also in weighted $L^2$ sense after choosing correct weights. To be convenient, we transfer the weighted $L^2$ estimates back into $L^\infty$ estimates using standard elliptic theory. During this process, the weights are usually slightly changed. Therefore, we will choose irrational $\delta_1<\epsilon$ arbitrarily close to $\epsilon$ and irrational $\delta_2<\delta_1$ arbitrary close to $\delta_1$.

  Let $\mathbf{\omega}_g$ be $(\omega_g^1,\omega_g^2,\omega_g^3)$ and $\mathbf{\omega}_h$ accordingly.
  Then $$\mathbf{\omega}_g-\mathbf{\omega}_h=O'(r^{-\epsilon}).$$
  The difference is small. So we can write it as infinitesimal difference plus some quadratic term. In other words, if we use $h$ to distinguish self-dual and anti-self-dual forms, then the self dual part $$\mathbf{\omega}_g^{+}-\mathbf{\omega}_h=\mathbf{\theta}+O'(r^{-2\epsilon}),$$
  where $\mathbf{\theta}=O'(r^{-\epsilon})\in V$.

  The operator $L=\mathcal{D}\mathcal{D}^{*}$ satisfies all the conditions of Theorem \ref{full-image}. So there exists $G_L$ such that $$\mathbf{\theta}=\mathcal{D}\mathcal{D}^{*}G_L\mathbf{\theta}.$$ Let $X=-\mathcal{D}^{*}G_L\mathbf{\theta}$, then $X=O'(r^{1-\delta_1})$.
  Let $\Phi_t=\exp(-tX)$ be the 1-parameter subgroup of diffeomorphisms generated by $X$.
  Then $$\Phi_t^*(L_X\mathbf{\omega}_h)-L_X\mathbf{\omega}_h=O'(r^{-2\delta_1}), \forall t\in[0,1].$$
  Therefore, $$\Phi_1^*\mathbf{\omega}_h-\mathbf{\omega}_h-L_X\mathbf{\omega}_h
  =\int_{t=0}^{1}(\Phi_t^*(L_X\mathbf{\omega}_h)-L_X\mathbf{\omega}_h)\mathrm{d}t=O'(r^{-2\delta_1}).$$
  So $$(\Phi_1^*\mathbf{\omega}_g)^+ -\mathbf{\omega}_g^+-\mathcal{D}X=(\Phi_1^*\mathbf{\omega}_g-\mathbf{\omega}_g-L_X\mathbf{\omega}_h)^+=O'(r^{-2\delta_1})$$
  because $\mathbf{\omega}_g-\mathbf{\omega}_h=O'(r^{-\epsilon})$.
  After replacing $\mathbf{\omega}_g$ by $\Phi_1^*\mathbf{\omega}_g$, we can assume that $$\mathbf{\omega}_g^{+}-\mathbf{\omega}_h=O'(r^{-2\delta_1}).$$
  We also have $\mathbf{\omega}_g^{-}=O'(r^{-\epsilon})$.
  Write it as $\mathbf{\omega}_g^{-}=\phi+\psi$ with $\phi$ $\mathbb{T}^k$-invariant and $\int_{\pi^{-1}(x)}\psi=0$.
  Since $-\mathrm{d}*\mathbf{\omega}_g^{-}=\mathrm{d}\mathbf{\omega}_g^{-}=-\mathrm{d}\mathbf{\omega}_g^{+}=O'(r^{-2\delta_1-1})$,
  $$(\mathrm{d}\mathrm{d}^*+\mathrm{d}^*\mathrm{d})\mathbf{\omega}_g^{-}=O'(r^{-2\delta_1-2}).$$
  In particular, $\tilde\psi=\psi-G_{\mathrm{d}\mathrm{d}^*+\mathrm{d}^*\mathrm{d}}(\mathrm{d}\mathrm{d}^*+\mathrm{d}^*\mathrm{d})\psi
  $ is harmonic and $\psi-\tilde\psi=O'(r^{-2\delta_2})$. By Theorem \ref{exponential-decay}, $\tilde\psi$ decay exponentially. Therefore, $\psi=O'(r^{-2\delta_2})$.

  Now, we write $\phi$ as $\phi=\alpha\wedge\eta-V*_{\mathbb{R}^3}\alpha$ for $\alpha\in\Lambda^1(\mathbb{R}^3)$.
  Then $$\mathrm{d}\phi=\mathrm{d}\alpha\wedge\eta-\alpha\wedge\mathrm{d}\eta
  -\mathrm{d}V\wedge*_{\mathbb{R}^3}\alpha-V\mathrm{d}(*_{\mathbb{R}^3}\alpha)=O'(r^{-2\delta_1-1})$$
  Let $\delta_3=\min\{2\delta_2,\delta_2+1\}$. Then $\mathrm{d}\alpha=O'(r^{-\delta_3-1})$ and $\mathrm{d}(*_{\mathbb{R}^3}\alpha)=O'(r^{-\delta_3-1})$.
  Therefore $\tilde\alpha=\alpha-G_{\mathrm{d}\mathrm{d}^*+\mathrm{d}^*\mathrm{d}}
  (\mathrm{d}\mathrm{d}^*+\mathrm{d}^*\mathrm{d})\alpha$ is a harmonic 1-form on $\mathbb{R}^3$.
  What's more $\alpha-\tilde\alpha=O'(r^{-\delta_4})$ for all irrational $\delta_4<\delta_3$.

  After a spectral decomposition as in Theorem 4.5 of our first paper \cite{FirstPaper}, we know that $\tilde\alpha=O'(r^{-1})$.

  Combining everything together, the decay rate of $\omega_g-\omega_h$ can be improved to $\min\{\delta_4,1\}$ when we start from $\epsilon$, where the irrational number $\delta_4$ can be arbitrarily close to $\min\{2\epsilon,\epsilon+1\}$. After finite times of iterations, the decay rate of $\omega_g-\omega_h$ can be improved to 1. Moreover, the decay rate of $\mathrm{d}\tilde\alpha$ can be arbitrarily close to 3. Notice that the coefficients of $\mathrm{d}x_i$ in $\tilde\alpha$ is even, so $$\tilde\alpha=\frac{a\mathrm{d}x^1+b\mathrm{d}x^2+c\mathrm{d}x^3}{r}+O'(r^{-3})$$
  for some constants $a$, $b$ and $c$. It's easy to deduce that $a=b=c=0$ from the decay rate of $\mathrm{d}\tilde\alpha$. So $\tilde\alpha=O'(r^{-3})$ instead.
  More iterations yield that the asymptotic rate, i.e the decay rate of $\omega_g-\omega_h$ can be improved to 3.
  \end{proof}

  \begin{remark}
  It's known \cite{Sen} that up to some exponentially decay term, the Cherkis-Hitchin-Ivanov-Kapustin-Lindstr\"om-Ro\v{c}ek metric outside a compact set can be written as the $\mathbb{Z}_2$-quotient of a Gibbons-Hawking ansatz whose $V$ can be written as
  $$V=1-\frac{16m}{|\mathbf{x}|}+\sum_{\alpha=1}^{k}
  (\frac{4m}{|\mathbf{x}-\mathbf{x_\alpha}|}+\frac{4m}{|\mathbf{x}+\mathbf{x_\alpha}|})=1+\frac{8m(k-2)}{r}+O'(r^{-3}).$$
  Therefore, our estimate is optimal. In ALF-$A_k$ case, the coefficients of $\mathrm{d}x_i$ in $\tilde\alpha$ are not necessarily even. So the asymptotic rate is only 2 here. Later, we will use this estimate to give a new proof of Theorem 1.3. Notice that the asymptotic rate of the multi-Taub-NUT metric is actually 3.

  \end{remark}

  \begin{theorem}
  Any ALG gravitational instanton $(M,g)$ is asymptotic to the standard model $(E,h)$ of order $\min_{n\in\mathbb{Z},n<2\beta}\frac{2\beta-n}{\beta}$ in the sense of Section 2 of \cite{FirstPaper}.
  \label{asymptotic-rate-ALG}
  \end{theorem}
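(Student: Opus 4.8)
The plan is to follow exactly the iteration scheme used in the proof of Theorem~\ref{asymptotic-rate-ALF-Dk}, adapting the bookkeeping to the ALG setting where the asymptotic torus is $\mathbb{T}^2$, the base is (a wedge in) $\mathbb{C}$, and the relevant rates are governed by the exponent $\beta$ attached to the type of singular fiber. As before, we start from the first paper's estimate $\mathbf{\omega}_g-\mathbf{\omega}_h=O'(r^{-\epsilon})$ and split the self-dual part as $\mathbf{\omega}_g^{+}-\mathbf{\omega}_h=\mathbf{\theta}+O'(r^{-2\epsilon})$ with $\mathbf{\theta}\in V$. Since $L=\mathcal{D}\mathcal{D}^{*}$ is still asymptotic to the Laplacian and satisfies the hypotheses of Theorem~\ref{full-image} (in its ALG generalization — this is precisely why Lemmas~\ref{estimate-of-oscillation-part} and Theorem~\ref{exponential-decay} were stated for general $\mathbb{T}^k$), we solve $\mathbf{\theta}=\mathcal{D}\mathcal{D}^{*}G_L\mathbf{\theta}$, set $X=-\mathcal{D}^{*}G_L\mathbf{\theta}$, and pull back by $\Phi_1=\exp(-X)$ to gauge away $\mathbf{\theta}$; this improves the self-dual error to $O'(r^{-2\delta_1})$ for irrational $\delta_1$ slightly below $\epsilon$. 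The anti-self-dual part is handled as before: decompose $\mathbf{\omega}_g^{-}=\phi+\psi$ into $\mathbb{T}^2$-invariant and fiber-mean-zero parts, use $\mathrm{d}\mathbf{\omega}_g^{-}=-\mathrm{d}\mathbf{\omega}_g^{+}$ to see $\psi$ is approximately harmonic, and invoke Theorem~\ref{exponential-decay} to push $\psi$ down to $O'(r^{-2\delta_2})$.

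The new work is in the analysis of the $\mathbb{T}^2$-invariant piece $\phi$. Here $\phi$ descends to a closed (and approximately coclosed) form on the punctured base in $\mathbb{C}$, and the admissible decay rates come from the indicial roots of the flat Laplacian twisted by the monodromy of the elliptic fibration — equivalently, from separating variables $r^{\lambda}e^{in\arg}$ on the cone $C(X)$ where $X$ is the relevant circle of circumference $2\pi\beta$. This is the step that replaces the simple ``spectral decomposition on $\mathbb{R}^3$'' used in the ALF case and is where the particular value $\delta=\min_{n\in\mathbb{Z},\,n<2\beta}\frac{2\beta-n}{\beta}$ enters: the exponents of homogeneous harmonic forms on the base are $n/\beta$ for $n\in\mathbb{Z}$, and after accounting for the normalization (a harmonic form decaying like $r^{-n/\beta}$ relative to the leading term, which itself is size $O(1)$ in the $\mathbf{\omega}_h$ normalization) one gets precisely these gaps $2\beta-n$ over $\beta$. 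I would tabulate, for each Kodaira type, the smallest positive such rate and check it matches the table in Main Theorem~1 (e.g. $\beta=1$ gives $\delta=1$; $\beta=\tfrac12$ gives $2$; $\beta=\tfrac56$ gives $\tfrac45$; etc.).

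Assembling the iteration: starting from rate $\epsilon$, one round of the argument yields rate arbitrarily close to $\min\{2\epsilon,\epsilon+1,\delta_{\mathrm{base}}\}$ where $\delta_{\mathrm{base}}$ is the first base indicial gap; finitely many iterations then saturate at $\delta=\min_{n\in\mathbb{Z},\,n<2\beta}\frac{2\beta-n}{\beta}$, at which point the leading harmonic term on the base is no longer improvable and the iteration terminates. As in the ALF-$D_k$ proof, one should be careful that the weighted-$L^2$ estimates are converted back to $L^\infty$ bounds via interior elliptic estimates (Theorems~9.11 and~9.20 of \cite{GilbargTrudinger}), which forces the use of irrational weights $\delta_1,\delta_2,\dots$ slightly below the ``true'' rate at each stage; this is purely cosmetic and does not affect the final sharp exponent.

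I expect the main obstacle to be the precise determination of the indicial roots in the twisted/orbifold setting of the ALG base — i.e.\ verifying that homogeneous harmonic forms on $C(X)\setminus B_R$ with the correct monodromy have exactly the exponents $n/\beta$, including keeping track of which integers $n$ actually contribute (the constraint $n<2\beta$) and ruling out, via the decay rate of $\mathrm{d}\tilde\alpha$ as in the ALF case, spurious slowly-decaying terms that the crude estimate would allow. Once that spectral/indicial computation is pinned down for each of the eight Kodaira types, the rest is a routine replay of the ALF-$D_k$ argument.
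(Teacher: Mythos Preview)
Your proposal is correct and follows essentially the same approach as the paper: the paper explicitly states that the proof of Theorem~\ref{asymptotic-rate-ALF-Dk} goes through verbatim until the analysis of the $\mathbb{T}^2$-invariant closed anti-self-dual form $\phi$, and then carries out exactly the indicial/monodromy computation you anticipate. The only difference is presentational: rather than phrasing it abstractly as indicial roots, the paper writes down the explicit basis $\xi^1=\mathrm{d}u\wedge\mathrm{d}\bar v$, $\xi^2=\mathrm{d}\bar u\wedge\mathrm{d}v$, $\xi^3=\mathrm{d}u\wedge\mathrm{d}\bar u-\mathrm{d}v\wedge\mathrm{d}\bar v$, observes that under $(u,v)\mapsto(e^{2\pi i\beta}u,e^{-2\pi i\beta}v)$ these pick up factors $e^{\pm 4\pi i\beta}$ and $1$, and checks directly that among the homogeneous candidates only $u^{-\delta}\xi^1$ and its conjugate are \emph{closed}, forcing $-\beta\delta+2\beta\in\mathbb{Z}$ and hence $\delta=\min_{n<2\beta}(2\beta-n)/\beta$.
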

  \begin{proof}
  The proof of Theorem \ref{asymptotic-rate-ALF-Dk} go through until the analysis of $\mathbb{T}^2$-invariant closed anti-self-dual form $\phi$. Following the notations of our first paper, the basis of anti-self-dual forms can be written as
  $$\xi^1=\mathrm{d}u\wedge\mathrm{d}\bar{v},\xi^2=\mathrm{d}\bar{u}\wedge\mathrm{d}v,
  \xi^3=\mathrm{d}u\wedge\mathrm{d}\bar{u}-\mathrm{d}v\wedge\mathrm{d}\bar{v}$$
  When $(u,v)$ become $(e^{2\pi i\beta}u,e^{-2\pi i\beta}v)$,
  $(\xi^1,\xi^2,\xi^3)$ become $(e^{4\pi i\beta}\xi^1,e^{-4\pi i\beta}\xi^2,\xi^3).$
  Notice that $\phi$ can be decomposed into combinations of $u^{-\delta}\xi^1$, $\bar{u}^{-\delta}\xi^1$, $u^{-\delta}\xi^3$
  and their conjugates. Only the first one and its conjugate are closed.
  To make $u^{-\delta}\xi^1$ well defined, $-2\pi\beta\delta+4\pi\beta$ must be in $2\pi\mathbb{Z}$.
  Therefore, $\delta=\min_{n\in\mathbb{Z},n<2\beta}\frac{2\beta-n}{\beta}$.
  \end{proof}

  \begin{remark}
  In Theorem 1.5 of \cite{Hein}, Hein constructed lots of ALG gravitational instantons of order $\min_{n\in\mathbb{Z},n<2\beta}\frac{2\beta-n}{\beta}$ whose tangent cone at infinity has cone angle $2\pi\beta<2\pi$. Therefore, our estimate of asymptotic rate is optimal in ALG case if $\beta<1$.
  \end{remark}

  It's not hard to extend our method to ALH-non-splitting gravitational instanton using exponential growth weights and therefore complete the proof of Main Theorem 1. We will omit the details.

\section{Rigidity of multi-Taub-NUT metric}

 In this section, we analyze the ALF-$A_k$ gravitational instantons as a warm up of Main Theorem 2. We will use the twistor space method as in \cite{CherkisKapustin}. An important step in our approach is a compactification in the complex analytic sense and the analysis of topology of this compactification following Kodaira's work \cite{Kodaira}.

 We start from the compactification.

 \begin{theorem}
 Any ALF-$A_k$ gravitational instanton $(M,I)$ can be compactified in the complex analytic sense.
 \label{compactification-Ak}
 \end{theorem}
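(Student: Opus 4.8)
The plan is to produce a complex analytic compactification of the twistor space $Z$ of $M$ fibered over $\mathbb{CP}^1$, and then to take a suitable slice (the fiber over $\zeta=0$, say) to recover the compactification of $(M,I)$ itself. By Theorem \ref{asymptotic-rate-ALF-Dk}'s ALF-$A_k$ analogue (the decay rate $3$, or rather the decay rate $2$ recorded in the Remark after it), the metric $g$ differs from the multi-Taub-NUT model $h$ by an error term that is $O'(r^{-2})$. First I would use this to compare $Z$ near infinity with the twistor space of the multi-Taub-NUT metric, whose explicit affine-algebraic model is spelled out in the excerpt: the gluing of two copies of $\rho\xi=\prod_{\alpha=1}^{k+1}(z-P_\alpha(\zeta))$. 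The twistor space of multi-Taub-NUT already has a known compactification (add the divisor at $\zeta$-infinity in each $\mathcal{O}(2)$-fiber together with the section at $\rho=\infty$ or $\xi=\infty$), so the heart of the matter is to show that the small error term does not obstruct extending holomorphic coordinates across the boundary.

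The key steps, in order, are: (1) Show that the twistor space functions $z$ (a section of $\mathcal{O}(2)$ coming from $\omega^2+i\omega^3$ on the fibers) and the "LeBrun-type" functions $\rho,\xi$ extend from the model end to genuine holomorphic functions on the end of $Z$, with the defining equation $\rho\xi=\prod(z-P_\alpha(\zeta))+\text{(lower order)}$ holding up to a correction that decays; here one uses that $\omega$ on the twistor space is holomorphic (stated in Section 2) together with the fast decay of $Err$ to run a fixed-point/implicit-function argument solving a $\bar\partial$-equation on the end. (2) Having produced holomorphic coordinates near infinity, glue in the boundary divisor $D$ exactly as in the model, obtaining a compact complex 3-fold $\bar Z \supset Z$ with $\bar Z \setminus Z = D$ a divisor mapping to $\mathbb{CP}^1$. (3) Restrict to the fiber $F_{\zeta_0}\cong M$ over a generic $\zeta_0$; the compactification $\bar Z$ cuts out on this fiber a compact complex surface $\bar M \supset M$ with $\bar M \setminus M$ a curve, which is the desired compactification of $(M,I)$. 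Alternatively, and perhaps more directly for the surface statement, one can argue on $M$ itself: the $I$-holomorphic function $z = -x^3+ix^2$ and the holomorphic vector field $X$ of Theorem \ref{LeBrun} persist on the actual instanton up to fast-decaying error, so the $\mathbb{C}^*$-action picture that exhibits multi-Taub-NUT as glued from two copies of $\mathbb{C}\times\mathbb{C}^*$ survives near infinity, and one caps off the two "ends" of the $\mathbb{C}^*$-fibers to compactify.

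The main obstacle I expect is step (1): upgrading the asymptotic (merely $C^\infty$-decaying) agreement with the model into \emph{honest holomorphic} objects on the end. One cannot simply transplant the model's holomorphic functions, since they will only be approximately holomorphic for the true complex structure $\underline{I}$ on $Z$; one must correct them by solving $\bar\partial u = (\text{error})$ with estimates showing the correction is lower-order, which requires a weighted Hölder or $L^2$ theory for $\bar\partial$ on the ALF end (the analogue of Theorem \ref{full-image} but for $\bar\partial$ rather than $L=\mathcal{D}\mathcal{D}^*$). The decay rate $O'(r^{-2})$ should be just enough to make the correction terms subleading so that the boundary behaviour matches the model and the gluing closes up. A secondary technical point is checking that the compactifying divisor is actually Cartier/that $\bar Z$ (resp. $\bar M$) is a genuine complex manifold or at worst has only the mild (resolvable) singularities already present in the model when the $P_\alpha$ collide, which is handled exactly as in Theorem \ref{LeBrun}.
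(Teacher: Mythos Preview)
Your primary plan---compactify the twistor space first via a $\bar\partial$-correction and then restrict to a fiber---is an unnecessary detour. The paper works entirely on the single complex surface $(M,I)$, and the obstacle you flag as central (upgrading approximately holomorphic coordinates to genuine ones by solving a weighted $\bar\partial$-problem on the end) never arises, because the holomorphic objects are built \emph{intrinsically} rather than by perturbing the model.

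Concretely: from \cite{FirstPaper} one already has a genuine $I_M$-holomorphic function $z$ on $M$ asymptotic to $z_E$. Since $\omega^+_M$ is honestly holomorphic-symplectic for $I_M$, the equation $\omega^+(X,\cdot)=-i\,\mathrm{d}z$ defines a genuinely holomorphic vector field $X$, and on each $z$-fiber the unique $1$-form $\phi$ with $\phi(X)=1$ is holomorphic. Integrating $\phi$ along each fiber (topologically a cylinder near infinity) produces a function $\chi$; the asymptotic rate is invoked only to check that the period is $8\pi i$ and to normalize $\chi$ against $-4\log\rho_E$ at infinity. Writing $\chi=-4\log\rho$ identifies the $x^1\to-\infty$ end of $M$ biholomorphically with $\{(z,\rho):|\rho|>e^R\}$ with $\omega^+=4i\,\mathrm{d}\log\rho\wedge\mathrm{d}z$, and similarly $\xi$ handles the $x^1\to+\infty$ end. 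One then adds $D_\pm=\{\rho=\infty\},\{\xi=\infty\}$ and $D_\infty=\{z=\infty\}$ to obtain $\bar M$ fibered over $\mathbb{CP}^1$.

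Your ``alternative'' in the last sentence is thus the correct route, but even there you describe $z$ and $X$ as ``persisting up to fast-decaying error,'' which is the wrong mindset: $z$ is an honest holomorphic function on $M$, and $X$ is defined from $z$ and the intrinsic $\omega^+_M$, so there is no $\bar\partial$-correction step anywhere. The decay rate enters only through the construction of $z$ in \cite{FirstPaper} and through matching the period and normalization of $\chi$ to the model.
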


 \begin{proof}
 By the remark after Theorem \ref{asymptotic-rate-ALF-Dk}, $M$ is asymptotic to the standard model $E$ with error $O'(r^{-2})$. $E$ is either the trivial product $(\mathbb{R}^3\setminus B_R)\times\mathbb{S}^1$ or the quotient of the Taub-NUT metric outside a ball by $\mathbb{Z}_{k+1}$. In any case, there exist two $I$-holomorphic functions $z_E$ and $\rho_E$ satisfying $\omega^+=4i\mathrm{d}\log\rho_E\wedge\mathrm{d}z_E$. We are mostly interested in the behaviors when $x^1$ goes to $-\infty$. It corresponds to
 $$\mathbb{C}\times(\mathbb{C}^*\cap B_{e^{-R}})\cong\mathbb{C}\times(B_{e^{R}}^c)=\{(z_E,\rho_E):|\rho_E|>e^R\}.$$
 We are also interested in the corresponding part of $M$.

 On $M$, there exists an $I$-holomorphic function $z=z_E+O'(r^{-\delta})$ for any $\delta<1$. As in Section 4.7 of \cite{FirstPaper}, we can define a holomorphic vector field $X$ by $\omega^+(X,Y)=-i\mathrm{d}z(Y)$. On each fixed fiber, there exists a unique holomorphic form $\phi$ such that $\phi(X)=1$. Locally
 $$\omega^+=ic(z,v)\mathrm{d}z\wedge\mathrm{d}v, X=\frac{1}{c(z,v)}\frac{\partial}{\partial v}, \phi=c(z,v)\mathrm{d}v.$$

 Notice that each fiber in the interesting part of $M$ is topologically $\mathbb{C}^*\cap B_{e^{-R}}$. So on each fiber, we can integrate the form $\phi$ to get a holomorphic function $\chi \in \mathbb{C}/\mathbb{Z}\tau(u)$ up to a function of $z$. We can fix this ambiguity by requiring that $\chi-\Phi^*(-4\log \rho_E)$ goes to 0 when $\chi$ becomes negative infinity, where $\Phi$ is the map from $M$ to $E$. $\tau(u)=8\pi i$ since $M$ is asymptotic to $E$ and it's true on $E$. We can fix this ambiguity by writing $\chi$ as $\chi=-4\log \rho$. Therefore we get a part of $M$ biholomorphic to $\mathbb{C}\times(\mathbb{C}^*\cap B_{e^{-R}})\cong\{(z,\rho):|\rho|>e^{R}\}$ with $\omega^{+}=4i\mathrm{d}\log \rho\wedge\mathrm{d}z$. Similarly, the part of $M$ where $x^1$ goes to $+\infty$ is biholomorphic to $\mathbb{C}\times(\mathbb{C}^*\cap B_{e^{-R}})\cong\{(z,\xi):|\xi|>e^{R}\}$ with $\omega^{+}=4i\mathrm{d}z\wedge\mathrm{d}\log\xi$.

 Now we can add the divisors $D_-=\{\rho=\infty\}$ and $D_+=\{\xi=\infty\}$ to compactify the two parts. We can get a manifold with a holomorphic function $z$ whose generic fiber is $\mathbb{CP}^1$. Adding $D_\infty=\mathbb{CP}^1=\{z=\infty\}$, we can get a compact manifold $\bar M$ with a meromorphic function $z:\bar M\rightarrow \mathbb{CP}^1$ whose generic fiber is $\mathbb{CP}^1$.
 \end{proof}

 It's easy to see that $-K=\{\omega^+=\infty\}=D_-+D_+ +2D_\infty$ is the anti-canonical divisor. Any generic fiber is a non-singular rational curve $C=\mathbb{CP}^1$ with $(-KC)=2$ and $(C^2)=0$. Following the work of Kodaira \cite{Kodaira}, we can classify singular fibers.

 \begin{theorem}
 Any singular fiber $C$ can be written as the sum of non-singular rational curves
 $$C=\Theta_0+...+\Theta_m,m=1,2,3,...,$$ with $$(\Theta_i\Theta_j)=\delta(|j-i|-1),$$ $$(\Theta_i^2)=-2+\delta(0)+\delta(m),$$
 $$(-K\Theta_i)=\delta(0)+\delta(m),$$
 where $\delta(n)=1$ if $n=0$, and $\delta(n)=0$ otherwise.
 \label{singular-fiber-Ak}
 \end{theorem}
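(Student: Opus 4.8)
The plan is to follow Kodaira's classification of singular fibers of elliptic (here: rational-ruled) fibrations, adapted to our setting where the generic fiber is $C=\mathbb{CP}^1$ with $(C^2)=0$ and $(-KC)=2$. First I would record the numerical constraints coming from a general fiber: writing a singular fiber as an effective divisor $C=\sum n_i\Theta_i$ with $\Theta_i$ irreducible curves and using $C\cdot\Theta_i=0$ (since distinct fibers are disjoint, i.e. $C$ moves in a base-point-free pencil over a neighborhood in the $z$-line), together with $-K\cdot C=2$ and the adjunction formula $(\Theta_i^2)+(\Theta_i\cdot K)=2p_a(\Theta_i)-2$. Because $-K=D_-+D_++2D_\infty$ is represented by an effective divisor disjoint from the interesting region and meeting a general fiber in the two points $\{\rho=\infty\}$, $\{\xi=\infty\}$, each singular fiber also meets $-K$ in exactly two points, counted with multiplicity; this forces $\sum n_i(-K\cdot\Theta_i)=2$, so at most two components have $(-K\cdot\Theta_i)>0$, and the total is tightly constrained.

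Next I would argue that every component $\Theta_i$ is a smooth rational curve with $(\Theta_i^2)<0$. Rationality and smoothness: the fibration is obtained by compactifying a hyperkähler (hence Kähler, in fact the $z$-fibers carry the holomorphic symplectic data of the multi-Taub-NUT end), and a multiple fiber cannot occur here because the generic fiber $\mathbb{CP}^1$ is simply connected while the end geometry (the $\mathbb{S}^1$-fibration over $\mathbb{R}^3$) forces the $\mathbb{C}^*$-action by $X$ to extend; more directly, one invokes the classification à la Kodaira for a relatively minimal ruled fibration. Negativity: since $(\Theta_i^2)\le 0$ for each component of a fiber with $(C^2)=0$ (this is the standard Zariski-type lemma: the intersection form on the components of a fiber is negative semidefinite with kernel spanned by $C$ itself), and since $C$ is reducible, no single component can be proportional to $C$, so $(\Theta_i^2)<0$; combined with adjunction and $(-K\cdot\Theta_i)\ge 0$, this gives $(\Theta_i^2)\in\{-1,-2\}$, with $(\Theta_i^2)=-1$ exactly when $(-K\cdot\Theta_i)=1$ and $(\Theta_i^2)=-2$ exactly when $(-K\cdot\Theta_i)=0$.

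The remaining combinatorial step is to show the dual graph is a chain $\Theta_0-\Theta_1-\cdots-\Theta_m$ with all multiplicities $n_i=1$, exactly two end components carrying $(-K\cdot\Theta_i)=1$ and $(\Theta_i^2)=-1$, and all interior components with $(\Theta_i^2)=-2$, $(-K\cdot\Theta_i)=0$. I would do this by writing $C=\sum n_i\Theta_i$, imposing $C\cdot\Theta_i=0$ for all $i$, i.e. $n_i(\Theta_i^2)+\sum_{j\ne i}n_j(\Theta_i\cdot\Theta_j)=0$, and using connectedness of the fiber together with $\sum n_i(-K\cdot\Theta_i)=2$. If two components met in more than one point, or if the graph had a cycle or a branch point, a short numerical argument with the negative semidefinite intersection form would contradict $(C^2)=0$ with $C\ne$ multiple of a single curve; similarly any $n_i\ge 2$ would force, via the $C\cdot\Theta_i=0$ relations read along the chain, a component with positive self-intersection. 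The only configurations surviving are the claimed chains, and relabeling along the chain gives $(\Theta_i\Theta_j)=\delta(|j-i|-1)$, $(\Theta_i^2)=-2+\delta(0)+\delta(m)$, $(-K\cdot\Theta_i)=\delta(0)+\delta(m)$. The main obstacle is the last step: ruling out multiple fibers and nontrivial multiplicities cleanly without re-deriving all of Kodaira's list — I expect to lean on the explicit $\mathbb{C}^*$-action coming from the vector field $X$ (every fiber is a $\mathbb{C}^*$-orbit closure), which forces each singular fiber to be a chain of $\mathbb{CP}^1$'s with reduced structure, exactly as in the LeBrun picture of Theorem~\ref{LeBrun}.
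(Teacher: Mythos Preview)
Your proposal uses the same core ingredients as the paper---Kodaira's adjunction identity and $C\cdot\Theta_i=0$, together with $(-K\cdot C)=2$---and would succeed, but you have misidentified the ``main obstacle.'' The multiplicity question is not hard here and does not require invoking the $\mathbb{C}^*$-action. The point you underuse is that $D_-=\{\rho=\infty\}$ and $D_+=\{\xi=\infty\}$ are \emph{sections} of the fibration $z:\bar M\to\mathbb{CP}^1$: each meets every fiber in a single reduced point. Hence, if $\Theta_0$ is the component through $D_-\cap C$ and $\Theta_m$ the one through $D_+\cap C$, then $n_0=n_m=1$ and $(-K\Theta_0)=(-K\Theta_m)=1$ automatically, with all other components disjoint from $-K$. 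The paper then runs the second identity from the end $\Theta_0$: since $(\Theta_0^2)=-1$ and $n_0=1$, the relation $0=n_0(\Theta_0^2)+\sum_{j\neq 0}n_j(\Theta_0\Theta_j)$ gives $\sum_{j\neq 0}n_j(\Theta_0\Theta_j)=1$, forcing a unique neighbor $\Theta_1$ with $n_1=1$ and $(\Theta_0\Theta_1)=1$; iterating propagates $n_i=1$ along a chain until one hits $\Theta_m$. This is the whole combinatorial step, and it avoids any appeal to negative semidefiniteness beyond $(\Theta_i^2)<0$, to classification of ruled fibrations, or to the LeBrun $\mathbb{C}^*$-picture.

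In short: your outline is correct, but replace the final paragraph's plan (ruling out branches, cycles, and higher multiplicities via Zariski's lemma or the $\mathbb{C}^*$-action) with the one-line observation that $D_\pm$ are sections, so the end components are reduced. That single geometric fact is what the paper exploits, and it makes the inductive chain argument immediate.
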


 \begin{proof}
 Let $C=\sum n_i\Theta_i$. The main tools are Kodaira's identities \cite{Kodaira}
 $$2\pi'(\Theta_i)-2-(\Theta_i^2)=(K\Theta_i),$$
 $$(C\Theta_i)=0=n_i(\Theta_i^2)+\sum_{j\not=i}n_j(\Theta_i\Theta_j),$$
 where the virtual genus $\pi'(\Theta_i)$ is non-negative and $\pi'(\Theta_i)$ vanishes if and only if $\Theta_i$ is a non-singular rational curve.

 If there is only one curve $C=\Theta_0$, then $(K\Theta_0)=-2$. Notice that $(\Theta_0^2)=0$ by the second identity. So $\pi'(\Theta_0)=0$ by the first identity. So $\Theta_0$ is a non-singular rational curve. In other words, the fiber is regular.

 Otherwise, from the information near $D_\pm$, there exist two curves $\Theta_0$ and $\Theta_m$ satisfying
 $(-K\Theta_0)=(-K\Theta_m)=1$ and $n_0=n_m=1$. All other curves don't intersect $-K$.
 From the second identity and the fact that $C$ is connected \cite{Kodaira}, we know that $(\Theta_i^2)<0$. Therefore, $\pi'(\Theta_i)$ must be 0, i.e. each $\Theta_i$ is a non-singular rational curve.
 It follows that $(\Theta_i^2)=-2+\delta(0)+\delta(m)$.

 Now the second identity becomes $1=\sum_{j\not=0}n_j(\Theta_0\Theta_j)$.
 If $(\Theta_0\Theta_m)=1$, we are done with $m=1$.
 Otherwise, suppose $(\Theta_0\Theta_1)=1$.
 Then $n_1=1$, so we get $2=1+\sum_{j\not=0,j\not=1}n_j(\Theta_1\Theta_j)$.
 If $(\Theta_1\Theta_m)=1$, we are done with $m=2$.
 Otherwise, we can continue.
 After several steps, we must stop because the number of curves is finite. Therefore, the singular fibers must have the required properties.
 \end{proof}

 \begin{remark}
 That's exactly the picture in Theorem \ref{LeBrun}.
 \end{remark}

 \begin{remark}
 If each fiber is regular except the fiber $\{z=0\}$, $M$ is biholomorphic to the minimal resolution of $xy=z^{k+1}$. In this case, the central fiber has $k=m-1$ non-singular rational curves $\Theta_1$,...,$\Theta_{m-1}$ whose intersection diagram is called the $A_k$ Dynkin diagram. That's the reason why we call $M$ ALF-$A_k$.
 \end{remark}

 Now we are able to give a new proof of the following theorem. It was first proved by Minerbe in \cite{MinerbeMultiTaubNUT} using the existence of Killing vector fields. However, since there is no Killing vector field on ALF-$D_k$ gravitational instantons, we prefer a new proof of this theorem using the twistor space.

 \begin{theorem}
 (Minerbe)
 Any ALF-$A_k$ gravitational instanton must be the multi-Taub-NUT metric.
 \end{theorem}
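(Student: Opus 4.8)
The plan is to reconstruct the twistor space $Z$ of $M$ from the complex geometry established above, identify it with the twistor space of the multi-Taub-NUT metric, and then invoke the reconstruction theorem of \cite{HithcinKarlhedLindstromRocek}, by which $\omega$ and $\tau$ determine the hyperk\"ahler metric.

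First, for every $\zeta\in\mathbb{CP}^1$ take $I_\zeta=\underline I|_{M\times\{\zeta\}}$ as the distinguished complex structure; then $(M,g,I_\zeta)$ is still an ALF-$A_k$ gravitational instanton, since the metric is unchanged and being ALF-$A_k$ is a Riemannian-topological property of the end, so Theorems~\ref{compactification-Ak} and~\ref{singular-fiber-Ak} apply with $I_\zeta$ in place of $I$. Running the construction in the proof of Theorem~\ref{compactification-Ak} fiberwise produces $I_\zeta$-holomorphic functions $z_\zeta,\rho_\zeta,\xi_\zeta$ on the relevant ends of $M$ with $\omega^+_\zeta=4i\,\mathrm d\log\rho_\zeta\wedge\mathrm dz_\zeta$, together with a compactification carrying a meromorphic map to $\mathbb{CP}^1$ whose singular fibers are the chains of Theorem~\ref{singular-fiber-Ak}; as in the remark following Theorem~\ref{singular-fiber-Ak}, this realizes $(M,I_\zeta)$ as the minimal resolution of $uv=\prod_\alpha(z-z_\alpha(\zeta))$, where the $z_\alpha(\zeta)$ are the $z_\zeta$-values of the degenerate fibers and there are $k+1$ of them counted with multiplicity, as dictated by the ALF-$A_k$ end.

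Next, globalize in $\zeta$. On the model end $E$ the twistor space is explicit and there $z$ is a section of $\mathcal O(2)$; since the fiberwise normalizations of $z_\zeta$ are pinned down by matching $E$, the $z_\zeta$ assemble into a global holomorphic $z\in\Gamma(Z,\mathcal O(2))$ on $Z=M\times\mathbb{CP}^1$. The discriminant locus $\{(\zeta,z_0):\{z_\zeta=z_0\}\ \text{is singular}\}$ is then a holomorphic divisor, finite over $\mathbb{CP}^1$, so $\prod_\alpha\bigl(z-z_\alpha(\zeta)\bigr)$ is a section of $\mathcal O(2k+2)$ and each branch is a quadratic $z_\alpha(\zeta)=a_\alpha\zeta^2+2b_\alpha\zeta+c_\alpha$. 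Imposing the real structure $\tau(p,\zeta)=(p,-1/\bar\zeta)$, which preserves the discriminant and acts on $\mathcal O(2)$-sections by $z(\zeta)\mapsto-\overline{z(\zeta)}/\bar\zeta^2$, forces each branch to be a real section, i.e.\ $z_\alpha(\zeta)=P_\alpha(\zeta)=a_\alpha\zeta^2+2b_\alpha\zeta-\bar a_\alpha$ with $a_\alpha\in\mathbb C$ and $b_\alpha\in\mathbb R$. Integrating the fiberwise holomorphic $1$-form and normalizing against $E$ likewise glues $\rho_\zeta$ and $\xi_\zeta$ together; tracking how this normalization twists with $\zeta$ --- the $\mathcal O(2)$-shift of $z$ contributes to the relevant period a term linear in $1/\zeta$, which is exactly the signature of the ALF rather than ALE end --- produces the transition functions $\tilde\rho=e^{-z/\zeta}\zeta^{-k-1}\rho$, $\tilde\xi=e^{z/\zeta}\zeta^{-k-1}\xi$, the relation $\rho\xi=\prod_\alpha(z-P_\alpha(\zeta))$, and $\omega=4i\,\mathrm d\log\rho\wedge\mathrm dz$. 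At this point $(Z,\omega,\tau)$ coincides with the twistor space of the multi-Taub-NUT metric, and the metric is determined.

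I expect the main obstacle to be exactly this globalization: proving that the fiberwise biholomorphisms and the normalizations of $z_\zeta,\rho_\zeta,\xi_\zeta$ vary holomorphically in $\zeta$, so that the assembled object is genuinely the complex twistor space and not merely a continuous family of surfaces, and --- within that --- extracting the explicit exponential twist $e^{\pm z/\zeta}$ in the transition functions rather than an abstract line-bundle twist. Subsidiary points of care are the resolution case when the $P_\alpha$ collide, handled as in the last part of the proof of Theorem~\ref{LeBrun}, and checking that all normalization constants match so that no residual scaling ambiguity survives. Since an $S^1$-isometry is in fact available in the ALF-$A_k$ case, one could alternatively use it to put the metric directly in the Gibbons-Hawking form of Example~\ref{multi-Taub-NUT} and shorten the final step; we retain the twistor argument because it is the one that will generalize to ALF-$D_k$.
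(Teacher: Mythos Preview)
Your proposal is correct and follows essentially the same twistor-space approach as the paper: compactify and classify fibers for each $\zeta$, assemble into the twistor space, pin down the transition functions and real structure against the model end, and invoke \cite{HithcinKarlhedLindstromRocek}. The paper handles your flagged obstacle about the exponential twist by the identity principle---$\tilde\rho/\rho$ is holomorphic in $(\zeta,z)$ and equals $e^{-z/\zeta}\zeta^{-k-1}$ on the asymptotic model $E$, hence everywhere---which is exactly the ``normalizing against $E$'' step you describe.
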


 \begin{proof}
 First of all, let's look at the slice $\zeta=0$. In other words, we use $I$ as the complex structure. $\omega^+$ as the holomorphic symplectic form. By Theorem \ref{compactification-Ak}, there exist $\rho$ and $\xi$ such that $\omega^{+}=4i\mathrm{d}\log\rho\wedge\mathrm{d}z=4i\mathrm{d}z\wedge\mathrm{d}\log\xi$. So $\rho\xi$ is a holomorphic function of $z$ satisfying $\lim_{z\rightarrow \infty}\rho\xi/z^{k+1}=1$. It's completely determined by its zeros. By Theorem \ref{singular-fiber-Ak} and Theorem \ref{LeBrun}, it's easy to see that $(M,I)$ is biholomorphic to $\rho\xi=\prod_{\alpha=1}^{k+1}(z-P_\alpha)$ or the minimal resolution of it.

 Now we may vary $\zeta\not=\infty$. We can still get
 $\rho\xi=\prod_{\alpha=1}^{k+1}(z-P_\alpha(\zeta))$ with $\omega=4i\mathrm{d}\log\rho\wedge\mathrm{d}z$.
 Similarly, for $\zeta\not=0$, we may use $\tilde\zeta=\zeta^{-1}$ instead. Then $\tilde \omega=\zeta^{-2}\omega$ and $\tilde z=\zeta^{-2}z$ are non-singular. So we can get $\tilde\omega=4i\mathrm{d}\log\tilde\rho\wedge\mathrm{d}\tilde z$ instead.
 The difference $\tilde \rho/\rho$ is a holomorphic function of $\zeta$ and $z$.
 It equals to $e^{-z/\zeta}\zeta^{-k-1}$ on $E$, so $\tilde \rho/\rho$ must be $e^{-z/\zeta}\zeta^{-k-1}$ on $M$. Similarly, $\tilde\xi=e^{z/\zeta}\zeta^{-k-1}\xi$.

 Since $\tilde{\rho}\tilde{\xi}=\prod_{\alpha=1}^{k+1}(\tilde{z}-\tilde{P}_\alpha(\tilde\zeta))$.
 It's easy to see that $P_\alpha(\zeta)=\zeta^2\tilde P_{\alpha}(\tilde \zeta)$.
 So $P_\alpha(\zeta)$ must be a degree two polynomial of $\zeta$.

 Now let's look at the action of the real structure. When $\zeta$ becomes $-1/\bar \zeta$, $M$ becomes exactly its own conjugation. Since $z$ is invariant under the action $(\zeta,z)\rightarrow(-1/\bar \zeta,-\bar z/\bar\zeta^{2})$, $P_\alpha$ must have the same property under the real structure.
 In other words, $P_{\alpha}(\zeta)=a_{\alpha}\zeta^2+2b_{\alpha}\zeta-\bar a_{\alpha}$ for some $a_{\alpha}\in\mathbb{C}$ and $b_{\alpha}\in\mathbb{R}$.
 It's easy to see that the real structure $\tau$ must act by $$\tau(\zeta,z,\rho,\xi)=(-1/\bar\zeta,-\bar z/\bar\zeta^{2},e^{\bar z/\bar\zeta}(1/\bar \zeta)^{k+1}\bar\xi,e^{-\bar z/\bar\zeta}(-1/\bar\zeta)^{k+1}\bar\rho).$$

 It's well known \cite{HithcinKarlhedLindstromRocek} that the form $\omega$ and the real structure on the twistor space determine the metric on $M$. So $M$ must be the multi-Taub-NUT metric.
 \end{proof}

 \section{Classification of ALF-$D_k$ gravitational instantons}
 In this section we prove Main Theorem 2 as we did for the ALF-$A_k$ gravitational instantons in the previous section.

 We still start from the compactification.

 \begin{theorem}
 Any ALF-$D_k$ gravitational instanton $(M,I)$ can be compactified in the complex analytic sense.
 \label{compactification-Dk}
 \end{theorem}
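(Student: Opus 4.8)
The plan is to mimic the proof of Theorem \ref{compactification-Ak}, but now working with the ALF-$D_k$ end, which (by the remark after Theorem \ref{asymptotic-rate-ALF-Dk}, via \cite{Sen}) is asymptotic with error $O'(r^{-3})$ to the $\mathbb{Z}_2$-quotient of a Gibbons-Hawking ansatz with $V=1-\frac{16m}{|\mathbf{x}|}+\sum_{\alpha=1}^{k}(\frac{4m}{|\mathbf{x}-\mathbf{x}_\alpha|}+\frac{4m}{|\mathbf{x}+\mathbf{x}_\alpha|})$. First I would identify the relevant holomorphic data on the standard model $E$: there is an $I$-holomorphic function which plays the role of $z$, but because of the $\mathbb{Z}_2$-action $\mathbf{x}\mapsto-\mathbf{x}$ (combined with an action on the circle fiber) one should not expect $z$ itself to descend; rather one expects the descending invariants to be the pair $(w,z)$ subject to a relation $\eta^2-w\eta-z=0$, matching the coordinates of Examples \ref{Cherkis-Hitchin-Ivanov-Kapustin-Lindstrom-Rocek}. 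Concretely, on the double cover the Gibbons-Hawking picture gives a holomorphic function $\eta=-x^3+ix^2$ and the $\mathbb{Z}_2$ sends $\eta\mapsto-\eta$, so $w=\eta+(-\eta)=0$? — more precisely one takes $z=\eta\cdot(-\eta)=-\eta^2$ on the fixed-$w$ slice, and on the full end the symmetric functions $w,z$ of the two sheets. I would set up these model functions $z_E$ (a section of $\mathcal O(4)$ in the twistor picture, hence growing like $r^2$ here since we are on the $\zeta=0$ slice) and the model "$\rho_E$" near the region where the base $|\mathbf{x}|\to\infty$.

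Next I would transplant this structure to $M$. Since $M$ is asymptotic to $E$ with error $O'(r^{-3})$, the standard argument (as in Section 4.7 of \cite{FirstPaper} and in the proof of Theorem \ref{compactification-Ak}) produces an $I$-holomorphic function $z=z_E+O'(r^{-\delta})$ on the end of $M$; similarly one gets $w$. Then, exactly as before, define the holomorphic vector field $X$ by $\omega^+(X,Y)=-i\,\mathrm{d}z(Y)$ (or the appropriate combination adapted to the $(w,z)$ structure), let $\phi$ be the fiberwise holomorphic 1-form with $\phi(X)=1$, and integrate $\phi$ along each fiber. The key point is the topology of the fibers of $z$ in the interesting region: for the ALF-$D_k$ end the generic fiber at infinity is again a copy of $\mathbb C^*$ (topologically a cylinder), so integrating $\phi$ yields a multivalued holomorphic function $\chi\in\mathbb C/\mathbb Z\tau$ with period $\tau$ pinned down by comparison with $E$; writing $\chi=-4\log\rho$ gives a holomorphic coordinate $\rho$, and on the "opposite end" a coordinate $\xi$, so that $\omega^+=4i\,\mathrm{d}\log\rho\wedge\mathrm{d}z$ in suitable coordinates, giving a region of $M$ biholomorphic to an explicit open piece of $\mathbb C^2$ (or of the $\eta^2-w\eta-z=0$ model). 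Finally, add the divisors $\{\rho=\infty\}$, $\{\xi=\infty\}$ and $\{z=\infty\}$ to compactify, producing a compact complex surface $\bar M$ with a meromorphic map to $\mathbb{CP}^1$ whose generic fiber is $\mathbb{CP}^1$.

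The main obstacle I anticipate is the $\mathbb{Z}_2$-quotient structure of the ALF-$D_k$ end, which is absent in the ALF-$A_k$ case. In the $A_k$ case the end is literally $\mathbb C\times(\mathbb C^*\cap B_{e^{-R}})$ (or its $\mathbb Z_{k+1}$-quotient, which is still a Gibbons-Hawking ansatz over all of $\mathbb R^3\setminus B_R$), and the functions $z,\rho,\xi$ are globally defined on the end. Here the natural holomorphic functions live on a double cover of (a neighborhood of) the base $\mathbb R^3\setminus B_R$, branched over the origin direction, so one must check that the correct $\mathbb Z_2$-invariant combinations $(w,z)$ and $(\rho,\xi)$ are well-defined, single-valued, and holomorphic on the end of $M$ itself, and that the relation $\eta^2-w\eta-z=0$ is reproduced — i.e. that the end of $M$ genuinely embeds into the model variety of Example \ref{Cherkis-Hitchin-Ivanov-Kapustin-Lindstrom-Rocek} rather than just its double cover. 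Verifying that the period $\tau$ and the various ambiguities are fixed exactly as on $E$ (using that they hold on $E$ and that the error is $O'(r^{-3})$, hence integrable enough for the comparison argument to close) is the technical heart, but it is parallel to the ALF-$A_k$ argument once the $\mathbb Z_2$-bookkeeping is set up correctly.
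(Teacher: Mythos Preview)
Your outline captures the shape of the argument but has a genuine gap at the compactification step, caused precisely by the $\mathbb{Z}_2$-bookkeeping you flag. In the $A_k$ case the end has \emph{two} asymptotic directions ($x^1\to\pm\infty$), giving two divisors $D_\pm$, and the fiber over $z=\infty$ is a single smooth $\mathbb{CP}^1$. In the $D_k$ case the $\mathbb{Z}_2$-action interchanges these two directions (equivalently, $\rho\leftrightarrow\xi$), so on $M$ itself there is only \emph{one} such divisor $D$, not two. More seriously, near $z=\infty$ the picture is, in local coordinates $(c,b)=(1/a,b)$, the quotient of $\mathbb{C}\times\mathbb{CP}^1$ by $(c,b)\sim(-c,1/b)$; this quotient has two $A_1$ singularities at $(0,\pm 1)$, and simply ``adding $\{z=\infty\}$'' does not yield a smooth surface. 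The paper follows Kodaira here: one resolves by inserting two copies of $N_{+2}$, so that the fiber at infinity is the reducible configuration $D_\infty=2\Theta+\Theta_0+\Theta_1$ with $(\Theta^2)=-1$, $(\Theta_i^2)=-2$. This is not visible from the $A_k$ template and is essential for the subsequent analysis of $-K=D+D_\infty$ and of the singular fiber $\{z=0\}$ (Theorem \ref{singular-fiber-Dk}).

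A second, more technical divergence: the paper does not obtain the holomorphic chart $(a,b)$ on the end of $M$ by integrating $\phi$ along fibers as you propose. Instead it runs a Newlander--Nirenberg type fixed-point argument in weighted Sobolev spaces $H^m_{\alpha,\beta}$, solving for $(a_E,b_E)$ in terms of $(a,b)$ via operators $T_1,T_2$ inverting $\bar\partial_1,\bar\partial_2$. The $I$-holomorphic function on $M$ is $z=(-x^3+ix^2)^2+O'(r^{-1})$ (note the square, forced by the $\mathbb{Z}_2$), and the relation $\omega^+_M=-4i\,\mathrm{d}\log b\wedge\mathrm{d}a$ is then arranged \emph{after} the chart is built, by a further adjustment of $b$. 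Your fiber-integration idea is closer to what the paper does later on the double cover $\tilde M$ (defining $f$ by $\omega^+=4i\,\mathrm{d}\log f\wedge\mathrm{d}\sqrt{z}$ in the proof of Main Theorem~2), but for the compactification itself the paper works directly with $(a,b)$ on $M$. Finally, the $(w,z)$ apparatus with $\eta^2-w\eta-z=0$ belongs to the 8-dimensional precursor; after the hyperk\"ahler quotient at $w=0$ only $z$ survives, so there is no second function $w$ to transplant to $M$.
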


 \begin{proof}
 We already know that outside a compact set, $M$ is up to $O'(r^{-3})$, the $\mathbb{Z}_2$-quotient of a standard $\mathbb{S}^1$-fiberation $E$ over $\mathbb{R}^3-B_R$. Moreover, there is an $I_M$-holomorphic function $z=(-x^3+ix^2)^2+O'(r^{-1})$ on $M$.

 Recall that there is a part of $(E,I_E)$ biholomorphic to $$\mathbb{C}\times(\mathbb{C}^*\cap B_{e^{-R}})=\{(a_E,b_E):a_E=-x^3+ix^2\in\mathbb{C},b_E\in\mathbb{C}^*\cap B_{e^{-R}}\}.$$ Now we claim that the corresponding part of $(M,I_M)$ is also biholomorphic to $$\mathbb{C}\times(\mathbb{C}^*\cap B_{e^{-R}})=\{(a,b):a\in\mathbb{C},b\in\mathbb{C}^*\cap B_{e^{-R}}\}$$
 What's more, under this diffeomorphism, $\omega^{+}_M=-4i\mathrm{d}\log b\wedge\mathrm{d}a$.

 It's hard to solve $a$, $b$ as functions of $a_E$, $b_E$ directly. However, following the idea of Newlander and Nirenberg \cite{NewlanderNirenberg}, we can instead solve $a_E$, $b_E$ as functions of $a$, $b$ and apply the inverse function theorem.

 Let $$a=u+iv, \log b=t+i\theta, a_E=a+z^1, b_E=be^{z^2}.$$
 Let $$\partial_1=\frac{\partial}{\partial a}=\frac{1}{2}(\frac{\partial}{\partial u}-i\frac{\partial}{\partial v}),
 \bar\partial_1=\frac{\partial}{\partial \bar a},
 \partial_2=b\frac{\partial}{\partial b}=\frac{1}{2}(\frac{\partial}{\partial t}-i\frac{\partial}{\partial \theta}),
 \bar\partial_2=\bar b\frac{\partial}{\partial \bar b}.$$
 Then the equation is reduced to
 $$\bar\partial_j z^k+\phi^{k}_l(u+iv+z^1,t+i\theta+z^2)(\bar\partial_j\bar z^{l}+\delta_{j}^{l})=0,$$
 where $$|\nabla^m \phi(u+iv,t+i\theta)|<C(m)(u^2+v^2+t^2)^{(-3-m)/2}$$
 for all $m\ge 0$ and all $t<-R$ if $R$ is large enough.

 Instead of the space $\tilde C^{n+\alpha}$ in \cite{NewlanderNirenberg}, we prefer weighted Hilbert spaces.

 Define $$||f||_{L^2_{\alpha,\beta}}=\int_{t<-R}|f|^2(1+u^2+v^2)^{\alpha/2}|t|^{\beta},$$
 and $$||f||_{H^{m}_{\alpha,\beta}}=\sqrt{\sum_{i+j+k+l\le m}||\partial_u^i\partial_v^j\partial_t^k\partial_\theta^l f||^2_{L^2_{\alpha+2i+2j,\beta+2k+2l}}},$$
 then we can find an operator $$T_1:L^2_{\alpha,\beta}\rightarrow L^2_{\alpha-2,\beta}$$
 satisfying $$\bar\partial_1T_1f=f$$
 in the distribution sense if $\alpha<2$ and $\alpha$ isn't an integer. Actually, by Theorem 4.12 of our first paper \cite{FirstPaper}, we can find $G_1$ such that
 $4\partial_1\bar\partial_1G_1f=f$ in the distribution sense. So $T_1f=4\partial_1G_1f$.

 Similarly, by Theorem \ref{full-image}, we can find an operator $$T_2:L^2_{\alpha,\beta}\rightarrow L^2_{\alpha,\beta-2}$$
 satisfying $$\bar\partial_2T_2f=f$$ in the distribution sense if $\beta$ isn't an integer. Since both $T_1$ and $T_2$ are canonically defined, $T_1$ commutes with $\partial_2$ and $\bar\partial_2$ while $T_2$ commutes with $\partial_1$ and $\bar\partial_1$.
 By the work of Newlander and Nirenberg \cite{NewlanderNirenberg}, the integrability condition implies that it's enough to solve the equation
 $$z^i=T^1f^i_1+T^2f^i_2-\frac{1}{2}T^1\bar\partial_1T^2f^i_2-\frac{1}{2}T^2\bar\partial_2T^1f^i_1,$$
 where $$f^i_j=-\phi^i_l(u+iv+z^1,t+i\theta+z^2)(\bar\partial_j\bar z^{l}+\delta_{j}^{l}).$$
 It has a unique solution in $H^{10}_{-2\epsilon,-2\epsilon}$ for any $0<\epsilon<1/2$ if $R$ is large enough.
 By Sobolev embedding theorem, $|z^i|\le C(1+u^2+v^2)^{(-1+\epsilon)/2}|t|^{-1+\epsilon}$.

 In conclusion, we've solved $a_E$ and $b_E$ in terms of $a$ and $b$. We can invert them to get $a$ and $b$ in terms of $a_E$ and $b_E$. By the arguments similar to Theorem \ref{compactification-Ak}, we can slightly modify $b$ such that $\lim_{b\rightarrow 0}(b/b_E)=1$ and $\omega^{+}_M=-4i\mathrm{d}\log b\wedge\mathrm{d}a$.

 Therefore, we can add the divisor $D=\{b=0\}$ to compactify this part. On $M\cup D$, the condition $z=a_E^2+O'(r^{-1})$ is reduced to $z(a,0)=a^2$. Near $a=\infty$, let $c=1/a$, then $M\cup D$ is locally biholomorphic to $$((\mathbb{C}^*\cap B_{1/R})\times\mathbb{CP}^1)/\mathbb{Z}_2=\{(c,b):0<|c|<1/R,b\in\mathbb{CP}^1\}/(c,b)\sim(-c,1/b).$$
 As Kodaira did in \cite{Kodaira}, we can add $\{(0,b)\}/(0,b)\sim(0,1/b)$ and then replace the neighborhoods of two singular points $(0,1)$ and $(0,-1)$ by two copies of $N_{+2}$.(See page 583 of \cite{Kodaira}). Similar to page 586 of \cite{Kodaira}, we know that $D_\infty=2\Theta+\Theta_0+\Theta_1$ with $(\Theta\Theta_i)=(D\Theta)=1$, $(\Theta_1\Theta_2)=(D\Theta_i)=0$, $(\Theta^2)=-1$ and $(\Theta_i^2)=-2$. Therefore, we get a compact manifold $\bar M=M\cup D\cup D_\infty$ with a meromorphic function $z:\bar M\rightarrow \mathbb{CP}^1$ whose generic fiber is a non-singular rational curve.
 \end{proof}

 On $\bar M$, the anti-canonical divisor $-K=\{\omega^+=\infty\}=D+D_\infty$. Any generic fiber is a non-singular rational curve $C=\mathbb{CP}^1$ with $(-KC)=2$ and $(C^2)=0$. Any singular fiber $\{z=z_0\}$ must belong to the list in Theorem \ref{singular-fiber-Ak} if $z_0\not=0, \infty$. So we only need to classify the fiber $\{z=0\}$. The main property is that $-K=D+D_\infty$ intersects $C$ at only one point.

 \begin{theorem}
 The fiber $C=\{z=0\}$ can be written as the sum of non-singular rational curves. There are three cases:

 (1) $C=\Theta$, $(\Theta^2)=0$, $(-K\Theta)=(D\Theta)=2$, but $D$ intersects $\Theta$ at one point with multiplicity 2.

 (2) $C=\Theta_0+\Theta_1$, $(\Theta_0^2)=(\Theta_1^2)=-1$, three curves $\Theta_0$, $\Theta_1$, $D$ intersect at same point.

 (3) $C=2\Theta_0+...+2\Theta_m+\Theta_{m+1}+\Theta_{m+2}$, $m=0,1,...$

 $$(\Theta_0\Theta_1)=...=(\Theta_{m-1}\Theta_m)=(\Theta_m\Theta_{m+1})=(\Theta_m\Theta_{m+2})=1,$$
 $$(\Theta_0^2)=-1, (\Theta_1^2)=...=(\Theta_{m+2}^2)=-2, (-K\Theta_0)=(D\Theta_0)=1,$$
 and all other intersection numbers are 0.
  \label{singular-fiber-Dk}
 \end{theorem}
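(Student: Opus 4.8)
The plan is to mimic the proof of Theorem \ref{singular-fiber-Ak} but to use the extra structural input that near $\{z=0\}$ the compactification $\bar M$ was built (in the proof of Theorem \ref{compactification-Dk}) as a $\mathbb{Z}_2$-quotient construction, so that the anti-canonical divisor $-K=D+D_\infty$ meets the central fiber $C=\{z=0\}$ at a single point, possibly with multiplicity. Write $C=\sum_i n_i\Theta_i$ as a sum of (a priori possibly singular) curves. The main tools are again Kodaira's identities
$$2\pi'(\Theta_i)-2-(\Theta_i^2)=(K\Theta_i),\qquad (C\Theta_i)=0=n_i(\Theta_i^2)+\sum_{j\neq i}n_j(\Theta_i\Theta_j),$$
together with $\pi'(\Theta_i)\ge 0$, with equality exactly when $\Theta_i$ is a non-singular rational curve, and the fact that $C$ is connected. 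From $(C^2)=0$ and the second identity, as in Theorem \ref{singular-fiber-Ak}, every component that does not meet $-K$ has $(\Theta_i^2)<0$, hence $\pi'(\Theta_i)=0$ and $(\Theta_i^2)=-2$, $(-K\Theta_i)=0$. So the only interesting components are those along which $-K$ concentrates; from the local model near $a=\infty$ (two copies of $N_{+2}$ glued in, cf. pages 583 and 586 of \cite{Kodaira}) one sees that there is a unique component $\Theta_0$ meeting $-K$, and $(-K\Theta_0)\le 2$ with $(-K\Theta_0)=(D\Theta_0)$ since $D_\infty$ does not meet $\{z=0\}$.

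The argument then splits according to how $D$ meets $\Theta_0$ — transversally at two points, at one point tangentially, or transversally at one point with $D$ also hitting a second sheet — which is exactly the trichotomy (1)/(2)/(3). First I would dispose of case (1): if $C=\Theta_0$ is irreducible then $(\Theta_0^2)=0$ forces, via Kodaira's first identity and $(K\Theta_0)=-(D\Theta_0)$, the value $(D\Theta_0)=2$ and $\pi'(\Theta_0)=0$; the local $\mathbb{Z}_2$-quotient picture shows $D$ is tangent to $\Theta_0$, giving multiplicity $2$ at one point. For the reducible cases, the component $\Theta_0$ carrying $-K$ has multiplicity $n_0=1$ coming from the $N_{+2}$ model, and every other component $\Theta_i$ has $(\Theta_i^2)=-2$, $\pi'(\Theta_i)=0$ as above; plugging into the second identity for $\Theta_0$ gives $1=-n_0(\Theta_0^2)+\sum_{j\neq 0}n_j(\Theta_0\Theta_j)$, and one reads off either case (2) ($(\Theta_0^2)=0$ is impossible when reducible, so $(\Theta_0^2)=-1$ and exactly one neighbour with multiplicity $1$, which then forces the symmetric two-component picture), or case (3), where the equation at $\Theta_0$ forces a single neighbour $\Theta_1$ of multiplicity $2$ and $(\Theta_0^2)=-1$, and then walking along the chain $\Theta_0,\Theta_1,\dots$ exactly as in the proof of Theorem \ref{singular-fiber-Ak}, the second identity at each successive $\Theta_i$ propagates multiplicity $2$ until the chain branches into the two tails $\Theta_{m+1},\Theta_{m+2}$ of multiplicity $1$; the process terminates since the number of components is finite. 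The resulting weighted dual graph is precisely the affine $D$-type diagram.

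I expect the main obstacle to be case (3): extracting from the explicit $\mathbb{Z}_2$-quotient compactification near $\{z=0\}$ the precise statements $n_0=1$, $(\Theta_0^2)=-1$, $(-K\Theta_0)=(D\Theta_0)=1$ for the distinguished component — i.e. pinning down which Kodaira model (\cite{Kodaira}, around pp. 583--586) governs this fiber — and then verifying that the connectedness of $C$ together with Kodaira's identities genuinely forces the branching to occur only at the end of the multiplicity-$2$ chain (rather than, say, multiple branch points). Once the distinguished component's invariants are fixed, the propagation along the chain is the same finite bookkeeping as in Theorem \ref{singular-fiber-Ak}, and the classification follows.
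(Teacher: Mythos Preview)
Your overall strategy---Kodaira's identities plus the structural input that $D$ meets the fiber $C=\{z=0\}$ set-theoretically in a single point---is the same as the paper's. But your case split and the numerical invariants you extract for the distinguished component are wrong, and this derails the reducible cases.

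First, the $N_{+2}$ model and the local picture near $a=\infty$ govern the fiber $D_\infty=\{z=\infty\}$, not $\{z=0\}$; you cannot read off $n_0$ or $(\Theta_0^2)$ from it. The correct input at $z=0$ is simply that on $D=\{b=0\}$ one has $z=a^2$, so $D\cap C$ is the single point $a=0$, and $(DC)=(-KC)=2$. The paper's trichotomy is then purely combinatorial: the intersection number $2$ at a single point decomposes as (1) one component with a tangency of order $2$; (2) two distinct components $\Theta_0,\Theta_1$ each meeting $D$ transversally at that same point; or (3) one component $\Theta_0$ meeting $D$ transversally but appearing with multiplicity $n_0=2$ in $C$. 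Your proposal assumes a \emph{unique} component meets $-K$ (false in case (2)) and that $n_0=1$ in the reducible situation (false in case (3)).

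This matters for the chain argument. In case (3), with $n_0=2$, $(D\Theta_0)=1$, the first Kodaira identity gives $(\Theta_0^2)=-1$, and then the second identity reads $0=2\cdot(-1)+\sum_{j\neq 0}n_j(\Theta_0\Theta_j)$, i.e.\ $\sum_{j\neq 0}n_j(\Theta_0\Theta_j)=2$. This is what allows either a branch into two multiplicity-$1$ neighbours (terminating immediately at $m=0$) or a single multiplicity-$2$ neighbour $\Theta_1$, after which the same dichotomy repeats along a multiplicity-$2$ chain until it finally branches. With your assumption $n_0=1$ you would get $\sum_{j\neq 0}n_j(\Theta_0\Theta_j)=1$, which forbids a multiplicity-$2$ neighbour and so cannot produce the $D$-type chain at all. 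Once you correct the trichotomy to the one above, the propagation is exactly the finite bookkeeping you describe, and the proof goes through as in the paper.
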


 \begin{proof}
 Let $C=\sum n_i\Theta_i$. We still use Kodaira's two identities
 $$2\pi'(\Theta_i)-2-(\Theta_i^2)=(K\Theta_i),$$
 $$(C\Theta_i)=0=n_i(\Theta_i^2)+\sum_{j\not=i}n_j(\Theta_i\Theta_j)$$
 and the fact the $C$ is connected. By the second identity,
 $$(\Theta_i^2)=-\frac{1}{n_i}\sum_{j\not=i}n_j(\Theta_i\Theta_j)\le 0.$$

 Since $(DC)=(-KC)=2$, but $D$ intersects $C$ at only one point, there are only three possibilities.

 (1) $\Theta_0$ intersects $D$ at one point with multiplicity 2. By Kodaira's first identity, $(\Theta_0^2)=0$ and $\pi'(\Theta_0)=0$. Therefore, there are no other curves at all. It's the first case.

 (2) $\Theta_0$ and $\Theta_1$ intersect $D$ at same point. So $(\Theta_i^2)=-1$ and $\pi'(\Theta_i)=0$. There are still no other curves at all. It's the second case.

 (3) $\Theta_0$ intersects $D$ at one point but $n_0=2$. In this case, $(\Theta_0^2)=-1$ and $\pi'(\Theta_0)=0$ by Kodaira's first identity. As in Theorem \ref{singular-fiber-Ak}, since any other curve has no intersection with $D$, it must be a non-singular rational curve with self intersection number $-2$.

 Therefore, either two different curves $\Theta_1$ and $\Theta_2$ intersect $\Theta_0$ or one curve $\Theta_1$ intersects $\Theta_0$ but $n_1=2$. In the first case, we are done. In the second case, we can continue the same kind of analysis. After finite steps, we are done since there are only finitely many curves.
 \end{proof}

 \begin{remark}
 If each fiber is regular except the fiber $\{z=0\}$, $M$ is biholomorphic to the minimal resolution of $x^2-zy^2=-z^{k-1}$. In this case, the central fiber has $k=m+2$ non-singular rational curves $\Theta_1$,...,$\Theta_{m+2}$ whose intersection diagram is called the $D_k$ Dynkin diagram. That's the reason why we call $M$ ALF-$D_k$.
 \end{remark}

 Now, we are able to prove Main Theorem 2.

 \begin{theorem}
 (Main Theorem 2) Any ALF-$D_k$ gravitational instanton must be the Cherkis-Hitchin-Ivanov-Kapustin-Lindstr\"om-Ro\v{c}ek metric.
 \end{theorem}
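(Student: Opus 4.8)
The plan is to follow exactly the same strategy that worked for the multi-Taub-NUT case in Section 5, transplanting it to the $D_k$ setting using the twistor-space picture from Section 6. First I would work on the slice $\zeta=0$, i.e.\ with the complex structure $I$ and holomorphic symplectic form $\omega^+$. By Theorem \ref{compactification-Dk} we have a compactification $\bar M$ with the meromorphic function $z:\bar M\rightarrow\mathbb{CP}^1$ whose generic fiber is $\mathbb{CP}^1$, whose singular fibers over $z_0\neq 0,\infty$ are of the $A$-type list in Theorem \ref{singular-fiber-Ak}, and whose fiber over $z=0$ is one of the three types in Theorem \ref{singular-fiber-Dk}. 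As in LeBrun's description, I would introduce the holomorphic vector field $X$ with $\omega^+(X,Y)=-i\,\mathrm{d}z(Y)$, integrate the dual holomorphic $1$-form on the $\mathbb{C}^*$-fibers to produce $\mathbb{C}^*$-invariant functions, and thereby realize $(M,I)$ (away from the bad fiber) as glued copies of $\mathbb{C}\times\mathbb{C}^*$. Combining the divisor data from Theorems \ref{singular-fiber-Ak} and \ref{singular-fiber-Dk} with the explicit resolution picture in Example \ref{Cherkis-Hitchin-Ivanov-Kapustin-Lindstrom-Rocek}, I would identify $(M,I)$ with the affine variety $x^2-zy^2=\frac{1}{-z}(\prod_\alpha(z-P_\alpha^2)-\prod_\alpha(-P_\alpha^2))+2\prod_\alpha(-iP_\alpha)y$ (or its minimal resolution), for suitable constants $P_\alpha$; here the normalization $\lim_{z\to\infty}$ of the defining polynomial is fixed by the ALF-$D_k$ asymptotics established in Theorem \ref{asymptotic-rate-ALF-Dk} and its remark (the $O'(r^{-3})$ control on the $\mathbb{Z}_2$-quotient of the Gibbons--Hawking ansatz with the $V$ written in the remark after Theorem \ref{asymptotic-rate-ALF-Dk}).

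Next I would let $\zeta$ vary over $\mathbb{CP}^1\setminus\{\infty\}$. The same construction on each fiber of the twistor space $Z$ produces, for each $\zeta\neq\infty$, an identification of the $\zeta$-slice with the variety above but now with $\zeta$-dependent coefficients $P_\alpha(\zeta)$, together with a form $\omega$ that restricts to $4i\,\mathrm{d}\log\rho\wedge\mathrm{d}z$ in the appropriate chart. Switching to $\tilde\zeta=\zeta^{-1}$ near $\zeta=\infty$ and using that $\tilde z=\zeta^{-4}z$ and $\tilde\omega=\zeta^{-4}\omega$ (the $\mathcal{O}(4)$ twist, since $z$ is a section of $\mathcal{O}(4)$ here rather than $\mathcal{O}(2)$) are regular there, holomorphicity in $\zeta$ forces the $P_\alpha(\zeta)$ to be polynomials of degree $2$ in $\zeta$: exactly $P_\alpha(\zeta)=a_\alpha\zeta^2+2b_\alpha\zeta-\bar a_\alpha$ once we impose the reality constraint. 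The gluing cocycles for $\rho_0,\rho_1,\xi_0,\xi_1$ between the two copies of $U$ are pinned down the same way — they are holomorphic functions of $\zeta,w,z$, they are forced to agree with the known expressions $e^{-\eta/\zeta}\zeta^{-k}$, $e^{\eta/\zeta}\zeta^{-k}$ on the end $E$ by the asymptotic comparison, and hence agree on all of the relevant region of $M$.

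Finally I would bring in the real structure $\tau(p,\zeta)=(p,-1/\bar\zeta)$. Since $\tau$ intertwines $\underline{I}$ with $-\underline{I}$ and acts on $z$ by $z\mapsto \bar z/\bar\zeta^4$, the constants $P_\alpha(\zeta)$ must satisfy the induced reality condition, which is precisely what pins $a_\alpha\in\mathbb{C}$, $b_\alpha\in\mathbb{R}$, and forces $\tau$ to act by the stated formula $\tau(\zeta,z,\rho,\xi)=(-1/\bar\zeta,\bar z/\bar\zeta^4,e^{\bar\eta/\bar\zeta}(1/\bar\zeta)^k\bar\xi,e^{-\bar\eta/\bar\zeta}(-1/\bar\zeta)^k\bar\rho)$. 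At this point the twistor data $(Z,\omega,\tau)$ of $M$ has been shown to coincide with that of the CHIKLR metric in Example \ref{Cherkis-Hitchin-Ivanov-Kapustin-Lindstrom-Rocek}; by the reconstruction theorem of \cite{HithcinKarlhedLindstromRocek}, $M$ must be that metric (after the rescaling that normalizes the asymptotic $S^1$-length). The main obstacle I anticipate is not in the twistor bookkeeping but in the compactification/identification step on the $\zeta=0$ slice: one must verify that the $\mathbb{C}^*$-action generated by $X$ extends holomorphically over the $z=0$ fiber in each of the three cases of Theorem \ref{singular-fiber-Dk}, that the $\mathbb{C}^*$-invariant coordinates $x,y$ of Example \ref{Cherkis-Hitchin-Ivanov-Kapustin-Lindstrom-Rocek} are globally well-defined holomorphic functions on $M$ (not merely on an open dense set), and that the defining equation they satisfy has exactly the claimed form — this is where the precise intersection data of Theorem \ref{singular-fiber-Dk} and the $O'(r^{-3})$ asymptotics must be used together, and it is the analogue of the step in the ALF-$A_k$ proof where Theorem \ref{singular-fiber-Ak} and Theorem \ref{LeBrun} are invoked.
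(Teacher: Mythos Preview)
Your high-level strategy---identify the $\zeta=0$ slice, vary $\zeta$, pin down the transition cocycles and the real structure, then invoke \cite{HithcinKarlhedLindstromRocek}---is exactly the paper's. But the step you flag as ``the main obstacle'' is where your proposal has a genuine gap, and the paper's resolution is not the one you sketch.

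You propose to produce $x,y$ as ``$\mathbb{C}^*$-invariant coordinates'' in the sense of Example \ref{Cherkis-Hitchin-Ivanov-Kapustin-Lindstrom-Rocek}. But in that example $x,y$ are invariants of a $\mathbb{C}^*$-action on the ambient 8-manifold \emph{before} the hyperk\"ahler quotient; there is no such action on $M$ itself to take invariants of, and the vector field $X$ with $\omega^+(X,\cdot)=-i\,\mathrm{d}z$ does not directly hand you $x,y$. The paper instead passes to the double cover $\tilde M$ of $M\setminus\{z=0\}$, where $\sqrt{z}$ is well-defined, and defines a holomorphic function $f$ by $\omega^+=4i\,\mathrm{d}\log f\wedge\mathrm{d}\sqrt{z}$ (note: $\mathrm{d}\sqrt{z}$, not $\mathrm{d}z$), normalized by $\lim_{b\to 0}fb=1$; the covering involution produces a second function $f'$. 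The pair $(f,f')$ plays the role that $(\rho,\xi)$ played in the $A_k$ proof. The crux is then a local computation near $a=0,\,b=0$: one writes $z=a'^2+cb'^m$ with $m\in\{1,2,\infty\}$ matching the three cases of Theorem \ref{singular-fiber-Dk}, and evaluates $\log f=-\int\frac{\sqrt{z}\,\mathrm{d}b'}{a'b'}$ asymptotically to extract the finite limits $\lim_{z\to 0}fz$, $\lim_{z\to 0}f\sqrt{z}$, or the $\mathbb{Z}_2$-quotient picture, respectively. Only after this does one set $P=yz+\prod_\alpha(-iP_\alpha)$, $Q=x$ via $fz=P+\sqrt{z}Q$, $f'z=P-\sqrt{z}Q$, and read off the defining equation. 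Without the double-cover/$\sqrt{z}$ device and these limit computations, there is no mechanism in your outline to show that $x,y$ extend holomorphically across $\{z=0\}$ or to determine which $P_\alpha$ vanish in cases (2) and (3).

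Two smaller corrections: $\omega$ is always a section of $\mathcal{O}(2)$, so $\tilde\omega=\zeta^{-2}\omega$ (it is $z$, not $\omega$, that lives in $\mathcal{O}(4)$ here); and the transition for $f$ is obtained by comparison with the $A_{2k-5}$ picture on the double cover, giving $\tilde f/f=e^{-\sqrt{z}/\zeta}\zeta^{-2k+4}$, from which the $(P,Q)$ transition matrix and then the $(x,y)$ transition are derived---your proposed cocycles $e^{\pm\eta/\zeta}\zeta^{-k}$ live on the 8-manifold, not on $M$.
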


 \begin{proof}
 We still start from the slice $\zeta=0$. We already know that the double cover $\tilde M$ of $M\setminus\{z=0\}$ is asymptotic to $E$. $\sqrt{z}\approx a$ is well defined on $\tilde M$. As before, we can define a holomorphic function $f$ on $\tilde M$ by $\omega^{+}=4i\mathrm{d}\log f\wedge\mathrm{d}\sqrt{z}$ and $\lim_{b\rightarrow0}fb=1$. The composition of $f$ and the covering transform is called $f'$.

 Now we are interested in the behavior near $a=0$ and $b=0$. We can write $z$ as
 $$z=e^{bf_1(a,b)}(a^2+bf_2(b)a+bf_3(b)).$$
 The function $f_4(b)=-(bf_2(b))^2/4+bf_3(b)$ can be written as $cb^{m}(1+bf_5(b))$, where $m=1,2,\infty$ depending on the type of $\{z=0\}$.
 Change the coordinates by
 $$a'=e^{bf_1(a,b)/2}(a+bf_2(b)/2),$$
 $$b'=be^{bf_1(a,b)/m}(1+bf_5(b))^{1/m}.$$
 Then $$z=a'^2+cb'^m,$$
 with $$\omega^+=-4i\mathrm{d}\log b'\wedge(1+b'f_6(a,b))\mathrm{d}a'=4i\mathrm{d}\log f\wedge\mathrm{d}\sqrt{z}.$$

 (1) In the first case of Theorem \ref{singular-fiber-Dk}, $m=1$.
 So $$\log f=-\int\frac{\sqrt{z}\mathrm{d}b'}{a'b'}=-\log b'+\log\frac{(\sqrt{z}+a')^2}{4z}=\log\frac{a'+\sqrt{z}}{a'-\sqrt{z}}+\log\frac{-c}{4z}$$
 if we ignore the term $b'f_6(a,b)$.
 However, the contribution from the term $b'f_6(a,b)$ is bounded by $C\int_0^{e^{-R}}|\frac{\sqrt{|z|}}{\sqrt{||z|-|c|t|}}|\mathrm{d}t\le C\sqrt{|z|}$.
 So $\lim_{z\rightarrow 0}fz=-c/4$. It's also true that $\lim_{z\rightarrow 0}f'z=-c/4$. Therefore, we can write $f, f'$ as
 $$fz=P+\sqrt{z}Q, f'z=P-\sqrt{z}Q.$$
 Away from $\{z=0\}$, the picture is similar to the $A_{2k-5}$ case, so
 $$P^2-zQ^2=(P+\sqrt{z}Q)(P-\sqrt{z}Q)=\prod_{\alpha=1}^{k}(z-P_{\alpha}^2).$$
 Notice that $\lim_{z\rightarrow 0}P=-c/4=\prod_{\alpha}(-iP_{\alpha})$, so we can write $P=yz+\prod_{\alpha}(-iP_{\alpha})$ and write $Q=x$.
 A simple calculation yields
 $$x^2-zy^2=\frac{1}{-z}(\prod_{\alpha}(z-P^2_{\alpha})-\prod_{\alpha}(-P^2_{\alpha}))+2\prod_{\alpha}(-iP_{\alpha})y$$ and
 $$\omega^+=i\mathrm{d}(\frac{1}{\sqrt{z}}\log(\frac{P+\sqrt{z}Q}{P-\sqrt{z}Q}))\wedge\mathrm{d}z
 =i\mathrm{d}(\frac{1}{\sqrt{z}}\log(\frac{yz+\prod_{\alpha}(-iP_{\alpha})
  +\sqrt{z}x}{yz+\prod_{\alpha}(-iP_{\alpha})-\sqrt{z}x}))\wedge\mathrm{d}z.$$

 (2) In the second case of Theorem \ref{singular-fiber-Dk}, $m=2$.
 So $$\log f=-\int\frac{\sqrt{z}\mathrm{d}b'}{a'b'}=
 -\log b'+\log\frac{\sqrt{z}+a'}{2\sqrt{z}}=\frac{1}{2}\log\frac{a'+\sqrt{z}}{a'-\sqrt{z}}+\frac{1}{2}\log\frac{-c}{4z}$$
 if we ignore the term $b'f_6(a,b)$.
 In this case, the contribution from the term $b'f_6(a,b)$ is bounded by $C\int_0^{e^{-R}}|\frac{\sqrt{|z|}}{\sqrt{||z|-|c|t^2|}}|\mathrm{d}t\le C\sqrt{|z|}\log(1/\sqrt{|z|})$. So $\lim_{z\rightarrow 0}f\sqrt{z}=\sqrt{-c}/2.$ It's also true that $\lim_{z\rightarrow 0}-f'\sqrt{z}=\sqrt{-c}/2.$
 So we can write $f, f'$ as
 $$f\sqrt{z}=x+\sqrt{z}y, -f'\sqrt{z}=x-\sqrt{z}y.$$
 It's easy to see that $$x^2-zy^2=(x+\sqrt{z}y)(x-\sqrt{z}y)=-\prod_{\alpha=1}^{k-1}(z-P_{\alpha}^2).$$
 Notice that $\lim_{z\rightarrow 0}x=\sqrt{-c}/2$ on $\Theta_0$, but $\lim_{z\rightarrow 0}x=-\sqrt{-c}/2$ on $\Theta_1$,
 so we can no longer reduce $x$ and $y$. However, let $P_{k}=0$, then
 $$x^2-zy^2=\frac{1}{-z}(\prod_{\alpha=1}^{k}(z-P^2_{\alpha})-\prod_{\alpha=1}^{k}(-P^2_{\alpha}))+2\prod_{\alpha=1}^{k}(-iP_{\alpha})y$$ and
 $$\omega^+=4i\mathrm{d}\log f\wedge\mathrm{d}\sqrt{z}
 =i\mathrm{d}(\frac{1}{\sqrt{z}}\log(\frac{yz+\prod_{\alpha=1}^{k}(-iP_{\alpha})
  +\sqrt{z}x}{yz+\prod_{\alpha=1}^{k}(-iP_{\alpha})-\sqrt{z}x}))\wedge\mathrm{d}z.$$
 It's convenient to write $P=yz$ and $Q=x$. So $fz=P+\sqrt{z}Q$ still holds.

 (3) In the third case of Theorem \ref{singular-fiber-Dk}, $m=\infty$.
 Just as we did in Theorem \ref{compactification-Dk} near $z=\infty$, the manifold becomes the minimal resolution of the $\mathbb{Z}_2$-quotient of multi-Taub-NUT metric. $\mathbb{Z}_2$ acts by interchanging $\rho$ and $\xi$. So $f=\rho$ and $f'=\xi$. They satisfy $ff'=\prod_{\alpha=1}^{k-2}(z-P_{\alpha}^2)$. Let $x=\sqrt{z}(f-f')/2$ and $y=(f+f')/2$.
 Then $x^2-zy^2=-z\prod_{\alpha=1}^{k-2}(z-P_{\alpha}^2)$. Let $P_{k-1}=P_{k}=0$. Then $(M,I)$ is biholomorphic to the minimal resolution of
 $$x^2-zy^2=\frac{1}{-z}(\prod_{\alpha=1}^{k}(z-P^2_{\alpha})-\prod_{\alpha=1}^{k}(-P^2_{\alpha}))+2\prod_{\alpha=1}^{k}(-iP_{\alpha})y$$
 and
 $$\omega^+=4i\mathrm{d}\log f\wedge\mathrm{d}\sqrt{z}
 =i\mathrm{d}(\frac{1}{\sqrt{z}}\log(\frac{yz+\prod_{\alpha=1}^{k}(-iP_{\alpha})
  +\sqrt{z}x}{yz+\prod_{\alpha=1}^{k}(-iP_{\alpha})-\sqrt{z}x}))\wedge\mathrm{d}z.$$
 Let $P=yz$ and $Q=x$. Then $fz=P+\sqrt{z}Q$ still holds.

 In conclusion, we always have the correct biholomorphic type and correct $\omega^+$. The only difference is how many $P_{\alpha}$'s equal to 0.
 Now we may vary $\zeta\not=\infty$. We can still get similar pictures.
 For $\zeta\not=0$, we may use $\tilde\zeta=\zeta^{-1}$ instead. Then $\tilde \omega=\zeta^{-2}\omega$ and $\tilde z=\zeta^{-4}z$ are non-singular. So we can get $\tilde f$, $\tilde f'$ $\tilde x$, $\tilde y$, $\tilde P$, and $\tilde Q$ instead.
 The difference $\tilde f/f$ transfer as $\tilde \rho/\rho$ in the $A_{2k-5}$ case. Therefore $\tilde f/f=e^{-\sqrt{z}/\zeta}\zeta^{-2k+4}$.
 So $(\tilde P+\sqrt{\tilde z}\tilde Q))/(P+\sqrt{z}Q)=e^{-\sqrt{z}/\zeta}\zeta^{-2k}$.

 It's conventional to rescale the metric. Therefore, we actually have $$(\tilde P+\sqrt{\tilde z}\tilde Q))/(P+\sqrt{z}Q)=e^{-2\sqrt{z}/\zeta}\zeta^{-2k}$$ as our transition function instead.
 In other words,

 $$ \left( {\begin{array}{*{20}c}
    \tilde P  \\
    \tilde Q   \\
 \end{array}} \right) = \zeta^{-2k}\left( {\begin{array}{*{20}c}
    \cosh(2\sqrt{z}/\zeta) &  -\sqrt{z}\sinh(2\sqrt{z}/\zeta) \\
    -\zeta^2\sinh(2\sqrt{z}/\zeta)/\sqrt{z} &  \zeta^2\cosh(2\sqrt{z}/\zeta)  \\
 \end{array}} \right)\left( {\begin{array}{*{20}c}
    P  \\
    Q  \\
 \end{array}} \right).
 $$

 As before, $P_\alpha(\zeta)$ must be a degree two polynomial in $\zeta$. When we look at the action of the real structure, it's easy to see that actually $P_{\alpha}(\zeta)=a_{\alpha}\zeta^2+2b_{\alpha}\zeta-\bar a_{\alpha}$ for some $a_{\alpha}\in\mathbb{C}$ and $b_{\alpha}\in\mathbb{R}$. Moreover, the real structure $\tau$ must act by $$\tau(\zeta,z,P,Q)=(\tilde\zeta=-\bar\zeta,\tilde z=\bar z,\tilde P=\bar P,\tilde Q=-\bar Q).$$

 We can further transfer those expressions into $x$ and $y$ by the fact that $P=yz+\prod_{\alpha}(-iP_\alpha(\zeta))$, $Q=x$ and
 $\tilde P=\tilde y\tilde z+\prod_{\alpha}(-i\tilde P_\alpha(\tilde \zeta))$, $\tilde Q=\tilde x$.

 It's well known \cite{HithcinKarlhedLindstromRocek} that the form $\omega$ and the real structure on the twistor space determine the metric on $M$. So $M$ must be the Cherkis-Hitchin-Ivanov-Kapustin-Lindstr\"om-Ro\v{c}ek metric.
 \end{proof}

 \section{A Torelli-type theorem for ALF gravitational instantons}
 In this section we prove the Torelli-type theorem for ALF gravitational instantons as an analogy of Kronheimer's results \cite{Kronheimer1} \cite{Kronheimer2}.

 First of all, we can rescale the metric to make the scaling parameter $\mu=1$.

 In the ALF-$A_k$ case, for each $\alpha\not=\beta$, $\pi^{-1}$ of the segment connecting $\mathbf{x}_\alpha$ and $\mathbf{x}_\beta$ is a sphere $S_{\beta,-\alpha}$. They generate $H_2(M,\mathbb{Z})$. It's easy to see that they are the only roots, i.e. homology classes with self-intersection number -2. The simple roots can be chosen as $S_{2,-1},S_{3,-2},...,S_{k+1,-k}$. They form an $A_k$ root system.

 By a simple calculation, $$\int_{S_{\beta,-\alpha}}\omega=\int_{S_{\beta,-\alpha}}4i\mathrm{d}\log\rho\wedge\mathrm{d}z
 =8\pi\int_{\pi(S_{\beta,-\alpha})}\mathrm{d}z=8\pi(P_\beta-P_\alpha).$$
 So $(\int_{S_{\beta,-\alpha}}\omega^1,\int_{S_{\beta,-\alpha}}\omega^2,\int_{S_{\beta,-\alpha}}\omega^3)$ equals to
 $(b_\beta-b_\alpha,\mathrm{Re}(a_\beta-a_\alpha),\mathrm{Im}(a_\beta-a_\alpha))$ up to a constant multiple.
 Since the ALF hyperk\"ahler structure is completely determined by the parameters $(a_\beta-a_\alpha,b_\beta-b_\alpha)$, it's also determined by three cohomology classes $[\omega^i]$.

 The ALF-$A_k$ gravitational instanton is singular if and only if there exist $\alpha\not=\beta$ such that $(a_\alpha,b_\alpha)=(a_\beta,b_\beta)$. It's equivalent to the vanishing of $[\omega^i]$ on some root.

 The ALF-$D_k$ case is similar. When $k\ge 2$, the roots $S_{\pm\beta,\pm\alpha}, \alpha\not=\beta$ generate $H_2(M,\mathbb{Z})$. The simple roots can be chosen as $S_{+2,+1},S_{+2,-1},S_{+3,-2}$,
 $S_{+4,-3}...,S_{+k,-(k-1)}$. They form a $D_k$ root system. The integrals on them $(\int_{S_{\pm\beta,\pm\alpha}}\omega^1,\int_{S_{\pm\beta,\pm\alpha}}\omega^2,\int_{S_{\pm\beta,\pm\alpha}}\omega^3)$ are $(\pm b_\beta\pm b_\alpha,\mathrm{Re}(\pm a_\beta\pm a_\alpha),\mathrm{Im}(\pm a_\beta\pm a_\alpha))$ up to a constant multiple, too. The ALF-$D_k$ gravitational instanton is singular if and only if there exist $\alpha\not=\beta$ such that $(a_\alpha,b_\alpha)=\pm(a_\beta,b_\beta)$. So the Torelli-type theorem also holds in this case.

 When $k=1$, $H_2(M,\mathbb{Z})$ is generated by $S_{+1,-1}$, a sphere with one ordinary double point. Its self-intersection number is 0. It's easy to see that the Torelli-type theorem holds, too.

 When $k=0$, $H_2(M,\mathbb{Z})=0$ and there is only one ALF-$D_0$ gravitational instanton. The Torelli-type theorem holds trivially.

 \section{Applications}
 There are lots of different ways to construct ALF gravitational instantons. Our work shows that they are essentially the same thing. This gives us the relationship among different constructions.

 The original idea of Ivanov-Lindstrom-Ro\v{c}ek \cite{LindstromRocek} \cite{IvanovRocek} comes from the supersymmetric sigma model. Later, their conjecture was realized by Cherkis-Hithcin-Kapustin \cite{CherkisKapustin} \cite{CherkisHitchin} as the moduli space of magnetic monopoles, i.e. solutions of the Bogomolny equation with prescribed singularities. The ALF-$D_k$ gravitational instanton also appears in the supersymmetric N=4 SU(2) gauge theories \cite{SeibergWitten}, string theory and M-theory \cite{Sen}.

 An anti-self-dual harmonic form on the ALF-$D_0$ gravitational instanton was computed by Sen \cite{SenSdualityI} \cite{SenSdualityII} as the evidence of the S-duality, a generalization of the electric-magnetic duality. The cohomology groups of more general gravitational instantons were computed by Hausel-Hunsicker-Mazzeo \cite{HauselHunsickerMazzeo}.

 The Yang-Mills instantons on ALE gravitational instantons were studied by Kronheimer-Nakajima \cite{KronheimerNakajima} as the generalization of ADHM construction \cite{AtiyahDrinfeldHitchinManin}. It's closely related to representations of quivers and Kac-Moody algebras \cite{Nakajima}. Cherkis \cite{Cherkis} generalized this to representations of bows in order to study the instantons on ALF gravitational instantons. He said, from the string theory picture of \cite{Witten}, it is more natural to consider instantons on ALF, rather than on ALE spaces. The ALF gravitational instantons can also be realized as moduli spaces of certain bow representations.

 We've seen the relationship between gravitational instantons and the representation theory. The Weierstrass elliptic function (page 723 of \cite{CherkisKapustin}) and the modular forms (page 20 of \cite{Ionas}) also appear in the calculation of the ALF gravitational instantons. It's not just a coincide. In fact, according to Kapustin-Witten \cite{KapustinWitten}, there is a deep relationship between the geometric Langlands program and the ALF-$D_k$ gravitational instantons (Section 9 of \cite{KapustinWitten}), S-duality, N=4 super Yang-Mills Theory and the Hitchin fiberation, i.e. the map from the moduli space of Higgs bundles to characteristic polynomials. Thus, we hope that our work can shed light on the geometric Langlands program and eventually, on the Langlands program.

 A related remarkable progress was made by Laumon and Ng\^o \cite{LaumonNgo}. They proved the fundemental lemma of the Langlands program for unitary groups using a finite field analogy of the Hitchin fibration introduced by Ng\^o \cite{Ngo}. According to Boalch \cite{Boalch}, the simplest Hitchin fibration is provided by the elliptic fibration of the ALG gravitational instanton. We will leave the ALG gravitational instanton for further study after the improvement of the asymptotic rate in Main Theorem 1.

 Compared to the applications to other areas of mathematics and physics, the application to the differential geometry is rather limited. One may image that the gravitational instanton arises as the bubble of some geometric constructions. It's easy to show that a bubble has finite energy. By the work of Cheeger-Tian \cite{CheegerTian}, the curvature must decay quadratically. However, according to Gromov (Page 96 of \cite{Gromov}), the quadratic curvature decay condition can't provide even the weakest topological information. On the contrary, the faster than quadratic curvature decay condition contains much more information but is hard to obtain. Nevertheless, when we reverse the process, our Main Theorem 1 provides an improved asymptotic rate for the gluing constructions of lots of interesting geometric structures.


\begin{thebibliography}{99}
 \bibitem{AtiyahDrinfeldHitchinManin} Atiyah,M; Drinfeld,V.G.; Hitchin,N.J.; Manin,Y.I.: Construction of instantons. Phys. Lett. A 65 (1978), no.3, 185-187. MR0598562.
 \bibitem{AtiyahHitchin} Atiyah,M.; Hitchin,N.J.: The geometry and dynamics of magnetic monopoles. M. B. Porter Lectures. Princeton University Press, Princeton, NJ, 1988. viii+134 pp. MR0934202.
 \bibitem{BandoKasueNakajima}  Bando,S.; Kasue,A.; Nakajima,H.: On a construction of coordinate at infinity on manifolds with fast curvature decay and maximal volume growth. Invent. Math. 97 (1989), no.2, 313-349. MR1001844.
 \bibitem{Boalch} Boalch,P.: Hyperkahler manifolds and nonabelian Hodge theory of (irregular) curves. arxiv:1203.6607
 \bibitem{ChalmersRocekWiles} Chalmers,G.; Ro\v{c}ek,M.; Wiles,S.: Degeneration of ALF $D_n$ metrics. J. High Energy Phys. 1999, no.1, Paper 9-14. MR1678062.
 \bibitem{CheegerGromoll} Cheeger,J; Gromoll,D: The splitting theorem for manifolds of nonnegative Ricci curvature. J. Differential Geometry 6 (1971/72), 119-128. MR0303460.
 \bibitem{CheegerTian} Cheeger,J; Tian,G.: Curvature and injectivity radius estimates for Einstein 4-manifolds. J. Amer. Math. Soc. 19 (2006), no.2, 487-525. MR2188134.
 \bibitem{FirstPaper} Chen,G.; Chen,X.X.: Gravitational instantons with faster than quadratic curvature decay (I). arxiv:1505.01790
 \bibitem{Cherkis} Cherkis,S.A.: Instantons on gravitons. Comm. Math. Phys. 306 (2011), no.2, 449-483. MR2824478.
 \bibitem{CherkisHitchin} Cherkis,S.A.; Hitchin,N.J.: Gravitational instantons of type $D_k$. Comm. Math. Phys. 260 (2005), no.2, 299-317. MR2177322.
 \bibitem{CherkisKapustin} Cherkis,S.A.; Kapustin,A.: Singular monopoles and gravitational instantons. Comm. Math. Phys.  203 (1999), no.3, 713-728. MR1700937.
 \bibitem{GilbargTrudinger} Gilbarg,D.; Trudinger,N.S.: Elliptic partial differential equations of second order. Reprint of the 1998 edition, Springer-Verlag, Berlin, 2001. MR1814364.
 \bibitem{Gromov} Gromov,M.: Volume and bounded cohomology. Inst. Hautes \'Etudes Sci. Publ. Math. No.56 (1982), 5-99 (1983). MR0686042.
 \bibitem{HauselHunsickerMazzeo} Hausel,T.; Hunsicker,E.; Mazzeo,R.: Hodge cohomology of gravitational instantons. Duke Math. J. 122 (2004), no.3, 485-548. MR2057017.
 \bibitem{Hein}  Hein,H.-J.: Gravitational instantons from rational elliptic surfaces. J. Amer. Math. Soc. 25(2012), no.2, 355-393. MR2869021.
 \bibitem{Hitchin} Hitchin,N.J.: The self-duality equations on a Riemann surface. Proc. London Math. Soc. (3) 55 (1987), no.1, 59-126. MR0887284.
 \bibitem{HithcinKarlhedLindstromRocek} Hitchin,N.J.; Karlhede,A.; Lindstr\"om,U.; Ro\v{c}ek,M.: Hyper-K\"ahler metrics and supersymmetry. Comm. Math. Phys. 108 (1987), no.4, 535-589. MR0877637.
 \bibitem{Ionas} Ionas,R.A.: Elliptic constructions of hyperkaehler metrics I: The Atiyah-Hitchin manifold. arxiv:0712.3598
 \bibitem{IvanovRocek} Ivanov,I.T.; Ro\v{c}ek,M.: Supersymmetric $\sigma$-models, twistors, and the Atiyah-Hitchin metric. Comm. Math. Phys. 182 (1996), no.2, 291-302. MR1447294.
 \bibitem{KapustinWitten} Kapustin,A.; Witten,E.: Electric-magnetic duality and the geometric Langlands program. Commun. Number Theory Phys. 1 (2007), no.1, 1-236. MR2306566.
 \bibitem{Kodaira} Kodaira,K.: On compact analytic surface II. Ann. of Math. (2) 77 (1963), 563-626. MR0184257.
 \bibitem{KodairaMorrow} Kodaira,K; Morrow,J.: Complex manifolds. Holt, Rinehart and Winston, Inc., New York-Montreal, Que.-London, 1971. MR0302937.
 \bibitem{Kronheimer1} Kronheimer,P.B.: The construction of ALE spaces as hyper-K\"ahler quotients. J. Differential Geom. 29 (1989), no.3, 665-683. MR0992334.
 \bibitem{Kronheimer2} Kronheimer,P.B.: A Torelli-type theorem for gravitational instantons. J. Differential Geom. 29 (1989), no.3, 685-697. MR0992335.
 \bibitem{KronheimerNakajima}  Kronheimer,P.B.; Nakajima, H.: Yang-Mills instantons on ALE gravitational instantons. Math. Ann. 288 (1990), no.2, 263-307. MR1075769.
 \bibitem{LaumonNgo} Laumon,G.; Ng\^o,B.C.: Le lemme fondamental pour les groupes unitaires. Ann. of Math. (2) 168 (2008), no.2, 477-573. MR2434884.
 \bibitem{LeBrun} LeBrun,C.: Complete Ricci-flat K\"ahler metrics on $C^n$ need not be flat. Several complex variables and complex geometry, Part 2 (Santa Cruz, CA, 1989),  297-304, Proc. Sympos. Pure Math., 52, Part 2, Amer. Math. Soc., Providence, RI, 1991. MR1128554.
 \bibitem{LindstromRocek} Lindstr\"om,U.; Ro\v{c}ek,M.: New hyper-K\"ahler metrics and new supermultiplets. Comm. Math. Phys.  115  (1988),  no. 1, 21-29. MR0929144.
 \bibitem{MinerbeMass} Minerbe,V.: A mass for ALF manifolds. Comm. Math. Phys. 289 (2009), no.3,925-955. MR2511656.
 \bibitem{MinerbeMultiTaubNUT} Minerbe,V.: Rigidity for multi-Taub-NUT metrics. J. Reine Angew. Math. 656 (2011), 47-58. MR2818855.
 \bibitem{Nakajima} Nakajima,H.: Instantons on ALE spaces, quiver varieties, and Kac-Moody algebras. Duke Math. J. 76 (1994), no.2, 365-416. MR1302318.
 \bibitem{NewlanderNirenberg} Newlander,A.; Nirenberg,L.: Complex analytic coordinates in almost complex manifolds. Ann. of Math. (2) 65 (1957), 391-404. MR0088770.
 \bibitem{Ngo} Ng\^o,B.C.: Fibration de Hitchin et endoscopie. Invent. Math. 164 (2006), no.2, 399-453. MR2218781.
 \bibitem{SeibergWitten} Seiberg, N.; Witten, E.: Gauge dynamics and compactification to three dimensions.  The mathematical beauty of physics (Saclay, 1996),  333-366, Adv. Ser. Math. Phys., 24, World Sci. Publ., River Edge, NJ, 1997. MR1490862.
 \bibitem{SenSdualityI} Sen,A.: Dyon-monopole bound states, self-dual harmonic forms on the multi-monopole moduli space, and SL(2,Z) invariance in string theory. Phys. Lett. B  329 (1994), no.2-3, 217-221. MR1281578.
 \bibitem{SenSdualityII} Sen,A.: Strong-weak coupling duality in four-dimensional string theory. Internat. J. Modern Phys. A 9 (1994),  no.21, 3707-3750. MR1285927.
 \bibitem{Sen} Sen,A.: A note on enhanced gauge symmetries in M- and string theory. J. High Energy Phys. 1997,  no.9, Paper 1-10. MR1608180.
 \bibitem{Witten} Witten,E.: Geometric Langlands from six dimensions. A celebration of the mathematical legacy of Raoul Bott, 281-310, CRM Proc. Lecture Notes, 50, Amer. Math. Soc., Providence, RI, 2010. MR2648898.
 \bibitem{Yau} Yau,S.T.: The role of partial differential equations in differential geometry. Proceedings of the International Congress of Mathematicians (Helsinki, 1978), pp.237-250, Acad. Sci. Fennica, Helsinki, 1980. MR0562611.
 \end{thebibliography}
 \end{document}